\definecolor{bleu_sombre}{rgb}{0,0,0.6}  \definecolor{rouge_sombre}{rgb}{0.8,0,0}\definecolor{vert_sombre}{rgb}{0,0.6,0}
\theoremstyle{plain}
\newtheorem{theorem}{{Theorem}}[section]
\newtheorem*{theorem*}{{Theorem}}
\newtheorem{proposition}[theorem]{Proposition}
\newtheorem*{proposition*}{Proposition}
\newtheorem*{corollary*}{Corollary}
\newtheorem{lemma}[theorem]{Lemma}
\newtheorem*{lemma*}{Lemma}
\theoremstyle{definition}
\newtheorem{definition}[theorem]{Definition}
\newtheorem*{definition*}{Definition}
\theoremstyle{remark}
\newtheorem{remark}[theorem]{Remark}
\newcommand {\limt}[2]{\xrightarrow[#1 \to #2]{}}
\newcommand{\abs}[1]{\left\vert #1\right\vert}        
\newcommand{\nr}[1]{\left\Vert #1\right\Vert}         
\newcommand{\innp}[2]{\left< #1 , #2 \right>}         
\newcommand{\set}[1]{\left\{ #1 \right\}}		
\newcommand{\Ii}[2] {\left\{ #1,\dots,#2 \right\}}
\renewcommand{\leq}{\leqslant}	\renewcommand{\geq}{\geqslant}
\newcommand{\inv}{^{-1}}
\newcommand{\littleo}[2]{\mathop{o}\limits_{#1 \to #2}}
\newcommand{\1}{\mathds 1}
\newcommand{\st}{\,:\,}
\renewcommand{\Re}{\mathsf{Re}}        
\renewcommand{\Im}{\mathsf{Im}}  
\newcommand{\supp}{\mathsf{supp}}
\newcommand{\Dom}{\mathsf{Dom}}
\newcommand{\Sp}{\mathsf{Sp}}
\renewcommand{\ker}{\mathsf{ker}} 
\newcommand{\Ran}{\mathsf{Ran}}
\newcommand{\R}{\mathbb{R}}		\newcommand{\C}{\mathbb{C}}
\newcommand{\N}{\mathbb{N}}	\newcommand{\Z}{\mathbb{Z}}	
\newcommand{\T}{\mathbb{T}}
\renewcommand{\a}{\alpha}\renewcommand{\b}{\beta}\newcommand{\g}{\gamma}\newcommand{\G}{\Gamma}\renewcommand{\d}{\delta}\newcommand{\e}{\varepsilon}\newcommand{\z}{\zeta} \renewcommand{\th}{\theta}\newcommand{\Th}{\Theta}\renewcommand{\k}{\kappa}\renewcommand{\l}{\lambda}\newcommand{\m}{\mu}\newcommand{\s}{\sigma}\newcommand{\f}{\varphi}\newcommand{\vf}{\phi}\newcommand{\h}{\chi}\renewcommand{\o}{\omega}\renewcommand{\O}{\Omega}
\newcommand{\Hc}{{\mathcal H}}\newcommand{\Lc}{{\mathcal L}}\newcommand{\Oc}{{\mathcal O}}\newcommand{\Tc}{{\mathcal T}}
\newcommand{\stepp}{\noindent {\bf $\bullet$}\quad }
\begin{document}

\renewcommand{\Tc}{T_{\mathsf{c}}}
\newcommand{\Tp}{\Tmax}
\newcommand{\Tm}{T_{-}}
\newcommand{\Tmin}{T_{\mathsf{min}}}
\newcommand{\Tmax}{T_{\mathsf{max}}}

\newcommand{\LL}{\mathscr L}

\newcommand{\uo}{u_{\mathsf{o}}} \newcommand{\uon}{u_{\mathsf{o},n}} \newcommand{\uom}{u_{\mathsf{o},m}}

\title{Critical time for the observability of Kolmogorov-type equations}

\author{J\'er\'emi Dard\'e and Julien Royer}

\begin{abstract}
This paper is devoted to the observability of a class of two-di\-men\-sion\-al Kol\-mo\-go\-rov-type equations presenting a quadratic degeneracy. We give lower and upper bounds for the critical time. These bounds coincide in symmetric settings, giving a sharp result in these cases. The proof is based on Carleman estimates and on the spectral properties of a family of non-selfadjoint Schr\"odinger operators, in particular the localization of the first eigenvalue and Agmon type estimates for the corresponding eigenfunctions.
\end{abstract}

\maketitle

\section{Introduction}

This paper is devoted to the study of the observability of two-dimensional Kolmogorov-type equations with a quadratic degeneracy. Let $\ell_+, \ell_- >0$. We set $I = ]-\ell_-,\ell_+[$ and $\O = \T \times I$, where $\T$ is the one-dimensional torus $\R / (2 \pi \Z)$. All along the paper, a generic point in $\O$ will be denoted by $(x,y)$, with $x \in \T$ and $y \in I$.\\

We consider $q \in C^3 (\bar I,\R)$ such that
\[
q(0) = 0 \quad \text{and} \quad  \min_{y \in I} q'(y) > 0.
\]
In particular, $q(y) \neq 0$ for $y \neq 0$. The model case is $q(y) = y$.\\

Then, for $T > 0$, we consider on $\O$ the Kolmogorov-type equation
\begin{equation} \label{eq-Kolmogorov}
\begin{cases}
\partial_t u + q(y)^2 \partial_x u - \partial_{yy} u = 0, & \text{on } ]0,T[ \times \O,\\
u(t,\cdot) = 0, & \text{on } \partial\O, \text{ for all } t \in ]0,T[,\\
u_{|t=0} \in L^2 (\O).
\end{cases}
\end{equation}
We are interested in the observability properties of the problem \eqref{eq-Kolmogorov}:

\begin{definition}
\begin{enumerate}[\rm (i)]
\item We say that \eqref{eq-Kolmogorov} is observable in time $T$ through an open subset $\omega$ of $\O$ if there exists $C > 0$ such that for any solution $u$ of \eqref{eq-Kolmogorov} we have 
\begin{equation} \label{eq-obs-vol}
\nr{u(T)}_{L^2(\O)}^2 \leq C \int_0^T \nr{u(t)}_{L^2(\o)}^2 \, dt.
\end{equation}
\item We say that \eqref{eq-Kolmogorov} is observable in time $T$ through an open subset $\Gamma$ of the boundary $\T \times \{-\ell_-,\ell_+\}$ of $\O$ if there exists $C > 0$ such that for any solution $u$ of \eqref{eq-Kolmogorov} we have 
\begin{equation} \label{eq-obs-Gamma}
\nr{u(T)}_{L^2(\O)}^2 \leq C \int_0^T \nr{\partial_\nu u(t)}_{L^2(\G)} \, dt.
\end{equation}
\end{enumerate}
\end{definition}

Null-controllability and observability properties of non-degenerate parabolic equations have been investigated
for several decades now, since the pioneering works \cite{egorov} and \cite{fattorini_russel} which proved independently the null-controllability of the one-dimensional heat-equation. Then \cite{Lebeau_Robbiano}
 and \cite{Fursikov_Imanuvilov} independently generalized this result in any dimension, showing
 that the heat equation is observable through any (interior or boundary) observation set, in any positive time, in any
 geometrical setting.
 
This is not the case for degenerate parabolic equations, which are a more recent subject of study. These equations
may or may not be observable, depending on the location and the strength of the degeneracy, the geometrical setting, and the time horizon $T$.
 The case of a degeneracy of the equation at the boundary of the domain
 is now fairly well-understood (see \cite{Cannarsa_Martinez_Vancostenoble} and the references therein). In general, this type of degenerate equations are observable for weak degeneracy, and are not when the degeneracy becomes too strong.
 
  In the case of interior
 degeneracy, there is no general theory, and equations are for the moment studied one after another. 
 Interestingly, the known results show that,  for precise strength of the degeneracy, a minimal time
 appears, under which  observability is lost.
  
Among parabolic equations with interior degeneracy, the Grushin  equation is so far the best understood: the two-dimensional case  is now
 almost completely  understood,  and some partial results have been obtained in multi-dimensional settings \cite{Beauchard_Cannarsa_Guglielmi,Beauchard_Miller_Morancey,Koenig17,Beauchard_Darde_Ervedoza,Duprez_Koenig,Allonsius_Boyer_Morancey}. Other equations have also been studied, such as the heat equation on the Heisenberg group \cite{Beauchard_Cannarsa}.
 
 Finally, we highlight that a minimal time condition for observability might also appear for systems of parabolic equations, degenerate or not (see, among others, \cite{AmmarKhodja_Benabdallah_GonzalesBurgos_deTeresa,Duprez,Benabdallah_Boyer_Morancey}),
 for degenerate Schr\"odinger equations \cite{Burq_Sun_19}, and appears naturally for the wave equation (see  \cite{raucht74,bardoslr92}).\\

Regarding the Kolmogorov equation \eqref{eq-Kolmogorov},  observability properties have already been investigated in the 
case $q(y) = y$, that is for the system
\begin{equation} \label{eq-Kolm-ysquared}
\begin{cases}
\partial_t u + y^2 \partial_x u - \partial_{yy} u = 0, & \text{on } ]0,T[ \times \O,\\
u(t,\cdot) = 0, & \text{on } \partial\O, \text{ for all } t \in ]0,T[,\\
u_{|t=0} \in L^2 (\O).
\end{cases}
\end{equation}
It is proved in \cite{Beauchard14} that a critical time $\Tc$ appears for the observability through an open set of the form $\omega = \T\times ]a,b[$ if $0 \notin ]a,b[$:

\begin{theorem}[\cite{Beauchard14}] \label{th-KB}
Let $\omega =  \T \times ]a,b[$ with $-\ell_- < a < b< \ell_+$. 
\begin{enumerate}[\rm (i)]
\item If $a < 0 < b$, then the problem \eqref{eq-Kolm-ysquared} is observable through $\omega$ in time $T$ for any $T>0$.
\item If $a > 0$ there exists $\Tc \geq \frac{a^2}{2}$ such that 
\begin{itemize}
\item if $T > \Tc$ then \eqref{eq-Kolm-ysquared} is observable through $\omega$,
\item if $T < \Tc$ then \eqref{eq-Kolm-ysquared} is not observable through $\omega$.
\end{itemize}
\end{enumerate}
\end{theorem}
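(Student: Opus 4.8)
The statement is Theorem~\ref{th-KB}, quoted from \cite{Beauchard14}, and the natural approach is the one used there: reduce the two-dimensional problem to a family of one-dimensional non-selfadjoint problems by Fourier expansion in $x$, and then analyze the cost of observability as a function of the Fourier mode $n$. Writing $u(t,x,y) = \sum_{n \in \Z} e^{inx} v_n(t,y)$, the equation \eqref{eq-Kolm-ysquared} decouples into the family
\begin{equation*}
\partial_t v_n - \partial_{yy} v_n + i n y^2 v_n = 0, \qquad v_n|_{y \in \{-\ell_-,\ell_+\}} = 0,
\end{equation*}
governed by the operators $\mathscr{A}_n = -\partial_{yy} + i n y^2$ on $L^2(I)$ with Dirichlet conditions. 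Observability of \eqref{eq-Kolm-ysquared} through $\omega = \T \times\,]a,b[$ in time $T$ is equivalent to a uniform-in-$n$ observability inequality for each $v_n$ through $]a,b[$, with constant independent of $n$; equivalently, by duality, a uniform bound on the control cost.

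For part (i), when $a < 0 < b$ the degeneracy point $y=0$ lies in the observation region. In that case one shows the one-dimensional problems are observable with a cost that does not blow up as $n \to \infty$: near $y = 0$ the operator $\mathscr{A}_n$ behaves like the free Laplacian and one can use a local parabolic observability estimate (Lebeau--Robbiano or a Carleman estimate) on a subinterval of $]a,b[$ around $0$, uniformly in $n$, so no minimal time is needed. The remaining low modes are finitely many non-degenerate parabolic problems, observable in any time. For part (ii), when $a > 0$ the observation region is at distance $\geq a$ from the degeneracy. The key point is the spectral analysis of $\mathscr{A}_n$: after rescaling $y = n^{-1/4} z$ one is led to an anharmonic oscillator $-\partial_{zz} + i z^2$, whose first eigenvalue has real part growing like $c\sqrt{n}$ (the real part of the first eigenvalue of the complex harmonic oscillator $-\partial_{zz}+iz^2$ on the line is $1/\sqrt 2$, giving $\mathrm{Re}\,\lambda_1(\mathscr A_n) \sim \sqrt{n/2}$). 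The associated eigenfunction is concentrated near $y = 0$ and decays (Agmon-type estimate) like $\exp(-c\sqrt n\, y^2)$ away from it, so its $L^2$ norm on $]a,b[$ is exponentially small, of order $\exp(-c\sqrt n\, a^2)$. Testing the observability inequality on the solution $e^{-\lambda_1(\mathscr A_n) t}$ times this eigenfunction forces $C \gtrsim \exp(2\mathrm{Re}\,\lambda_1 \cdot T)\cdot \exp(c \sqrt n\, a^2) \to$ finite only if $T$ is large enough; balancing the exponents $\exp(-T \sqrt{n/2})$ against $\exp(-c\sqrt n\, a^2)$ shows non-observability when $T$ is small, which yields $\Tc \geq a^2/2$ after computing the optimal constant $c$. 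Conversely, for $T$ large a Carleman estimate adapted to each $\mathscr{A}_n$, with a weight absorbing the quadratic potential and a large-parameter gain beating the $\sqrt n$ spectral growth, gives the uniform observability; this produces some finite $\Tc$, and the monotonicity of observability in $T$ closes the dichotomy.

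The main obstacle is the delicate spectral analysis of the non-selfadjoint family $\mathscr A_n$ on the \emph{bounded} interval $I$ with Dirichlet conditions, rather than on the whole line: one must show that the first eigenvalue is still localized near $\sqrt{n/2}$ up to lower-order corrections, and that the corresponding eigenfunction still satisfies the Gaussian Agmon decay $\exp(-c\sqrt n\, y^2)$ uniformly, despite the boundary. This requires comparison with the model operator on $\R$, control of the resolvent of a highly non-normal operator (whose pseudospectrum is large), and care because the naive variational characterization is unavailable. The positive direction is more robust — it is a quantitative Carleman estimate with an $n$-dependent weight — but getting the sharp threshold (and in the symmetric case the matching upper and lower bounds, which is the new contribution announced in the abstract) hinges on tracking the constant in the exponential rate precisely, i.e.\ on the precise asymptotics of $\mathrm{Re}\,\lambda_1(\mathscr A_n)$ and of the Agmon distance.
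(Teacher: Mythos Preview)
This theorem is \emph{not} proved in the present paper: it is quoted as a known result from \cite{Beauchard14} and serves only as background for the paper's own main theorem (Theorem~\ref{th-main}), which concerns boundary observability for a general $q$. There is therefore no ``paper's own proof'' to compare against. That said, the strategy you outline is essentially the one used in \cite{Beauchard14}, and it is also the template that the present paper follows for its own result: Fourier decomposition in $x$, reduction to the family $\mathscr{A}_n = -\partial_{yy} + inq(y)^2$, spectral localization of the first eigenvalue near $\sqrt{n}\,q'(0)e^{i\pi/4}$ (Proposition~\ref{prop-lambda-n} here), Agmon estimates for the first eigenfunction (Proposition~\ref{prop-Agmon}) to produce the lower bound on $\Tc$, and a Carleman estimate with $n$-dependent weight combined with the semigroup decay (Propositions~\ref{prop-obs-large-n} and~\ref{prop-decay-expKn}) for the upper bound.

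Two remarks on the content of your sketch. First, your argument for part~(i) is too loose: saying that ``near $y=0$ the operator behaves like the free Laplacian'' does not by itself give a uniform-in-$n$ observability constant, since the potential $iny^2$ still carries the large parameter $n$. The actual mechanism in \cite{Beauchard14} is a Carleman estimate whose weight is centered inside $]a,b[$ and whose structure makes the constant uniform precisely because the singular point $y=0$ lies in the observed region; your heuristic does not capture this. Second, in your part~(ii) lower-bound computation the inequality for $C$ has a sign issue as written (you want the exponential growth to come from the \emph{ratio} of $e^{-2\Re\lambda_1 T}$ on the left to the Agmon-small observation term on the right), and ``computing the optimal constant $c$'' hides exactly the work that gives the sharp $a^2/2$: one needs the precise Agmon weight $\frac{1}{\sqrt 2}\int_0^y s\,ds = y^2/(2\sqrt 2)$ together with $\Re\lambda_1(\mathscr A_n)\sim \sqrt{n}/\sqrt 2$, and these two $\sqrt 2$'s must be tracked carefully. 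Your final paragraph correctly identifies that this bookkeeping on a bounded interval (rather than on $\R$) is the genuine technical content.
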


The model studied in \cite{Beauchard14} also includes the equation
$$
\partial_t u + y^\gamma \partial_{x} u - \partial_{yy} y = 0
$$
with $\gamma = 1$. In that case, it is proved that the problem is observable through any open set $\omega$, for any $T > 0$, generalizing the previous study \cite{Beauchard_Zuazua_09} where the sets of observation were horizontal strips. Theorem \ref{th-KB} corresponds to the case $\gamma = 2$. The case $\gamma = 3$ is studied in \cite{beauchard_henry}. It is proved that if $0<a<b< \ell_+$ then the problem is not observable through $\T \times (a,b)$ in any time $T>0$.\\

The fact that the observation domain $\omega$ is a horizontal strip of $\O$ may seem quite restrictive. However,
the recent study \cite{koenig18} shows that it is a quasi-necessary condition for \eqref{eq-Kolm-ysquared} to be observable.

\begin{theorem}[\cite{koenig18}] \label{th-AK1}
Let $\omega = \omega_x \times I$, where $\omega_x$ is a strict open set of $\T$.
Then \eqref{eq-Kolm-ysquared} is not observable through $\omega$ in any time $T>0$.
\end{theorem}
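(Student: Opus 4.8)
The plan is to disprove the observability inequality \eqref{eq-obs-vol} for $\omega=\omega_x\times I$ by producing, for an \emph{arbitrary} fixed $T>0$, a sequence of solutions $(u^{(N)})_N$ of \eqref{eq-Kolm-ysquared} whose quotient $\nr{u^{(N)}(T)}_{L^2(\O)}^2\big/\int_0^T\nr{u^{(N)}(t)}_{L^2(\omega)}^2\,dt$ blows up; since $T>0$ is arbitrary, non-observability holds in every time. Expanding a solution in a Fourier series in $x$, $u(t,x,y)=\sum_{n\in\Z}u_n(t,y)e^{inx}$, the coefficient $u_n$ solves $\partial_t u_n+\Ac_n u_n=0$ with $\Ac_n=-\partial_{yy}+iny^2$ on $L^2(I)$ and Dirichlet conditions, and $\Re\innp{\Ac_n v}{v}=\nr{v'}_{L^2(I)}^2\ge0$. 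A single Fourier mode $e^{inx}v_n(y)$ cannot spoil observability, since its $x$-profile $e^{inx}$ has constant modulus and is therefore detected by $\omega_x\times I$ exactly as well as by $\O$; the obstruction can only come from superposing modes so as to build a solution localized in $x$ \emph{away from} $\omega_x$. The crucial feature of \eqref{eq-Kolm-ysquared} that makes this possible is that the transport speed is $q(y)^2=y^2$, so a solution concentrated near $y=0$ moves only very slowly in the $x$-direction.

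To make this quantitative I would use the following facts about the non-selfadjoint operators $\Ac_n$ — precisely the kind of spectral analysis (localization of the first eigenvalue, Agmon estimates) developed in this paper. Let $\l_1(n)$ be the eigenvalue of $\Ac_n$ of smallest real part and $v_n$ the associated $L^2(I)$-normalized eigenfunction. Rescaling $y=n^{-1/4}z$ turns $\Ac_n$ into $n^{1/2}$ times the complex harmonic oscillator $-\partial_{zz}+iz^2$ on the interval $n^{1/4}I$, which exhausts $\R$ as $n\to\infty$; hence $\l_1(n)=\tfrac{1+i}{\sqrt2}\,n^{1/2}+O(e^{-c\sqrt n})$ with $\partial_n\l_1(n)=O(n^{-1/2})$ and $\partial_n^2\l_1(n)=O(n^{-3/2})$, while $v_n$ concentrates near $y=0$ at scale $n^{-1/4}$, satisfies $\nr{v_n}_{L^\infty}\lesssim n^{1/8}$ and the Agmon-type bound $\abs{v_n(y)}\lesssim n^{1/8}e^{-cn^{1/2}y^2}$ on $\overline I$, and is slowly varying in $n$ (bounds on $\partial_n v_n$ of the corresponding order).

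Fix $T>0$, choose $x_0\in\T$ and $d_0>0$ with the ball $B(x_0,d_0)$ disjoint from $\omega_x$ (this is where the hypothesis on $\omega_x$ enters), and for large $N$ set $u^{(N)}(0,x,y)=\sum_{n\in\Z}e^{-(n-N)^2/N}e^{in(x-x_0)}v_n(y)$, so that $u^{(N)}(t,x,y)=\sum_n e^{-(n-N)^2/N}e^{-\l_1(n)t}e^{in(x-x_0)}v_n(y)$. This is a Gaussian wave packet of frequency width $\sim N^{1/2}$, hence spatial width $\sim N^{-1/2}$ in $x$, concentrated near $y=0$ at scale $N^{-1/4}$. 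The bounds on $\partial_n\l_1$ and $\partial_n^2\l_1$ show that over a window $\abs{n-N}\lesssim N^{1/2}\log N$ the factor $e^{-\l_1(n)t}$ only reshapes the Gaussian by a bounded factor and shifts its centre by $O(TN^{-1/2})$, so that for $N$ large the packet stays, for all $t\in[0,T]$, a bump in $x$ of width $\sim N^{-1/2}$ whose centre remains within $d_0/2$ of $x_0$. Evaluating the (complex) Gaussian sum then yields, for $x\in\omega_x$, $y\in\overline I$ and $t\in[0,T]$, a bound $\abs{u^{(N)}(t,x,y)}\lesssim \mathrm{poly}(N)\,e^{O(T^2)}\,e^{-cNd_0^2}$, so $\int_0^T\nr{u^{(N)}(t)}_{L^2(\omega)}^2\,dt\lesssim T\,\mathrm{poly}(N)\,e^{O(T^2)}\,e^{-2cNd_0^2}$. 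On the other hand $\nr{u^{(N)}(T)}_{L^2(\O)}^2=2\pi\sum_n e^{-2(n-N)^2/N}e^{-2\Re\l_1(n)T}$ is a sum of nonnegative terms, so keeping only $n=N$ and using $\Re\l_1(N)=O(N^{1/2})$ gives $\nr{u^{(N)}(T)}_{L^2(\O)}^2\gtrsim e^{-CN^{1/2}T}$. Hence the quotient is $\gtrsim e^{\,2cNd_0^2-CN^{1/2}T}\big/(T\,\mathrm{poly}(N))\to+\infty$ as $N\to\infty$: the gain $e^{cNd_0^2}$, exponential in $N$, beats the loss $e^{CN^{1/2}T}$, only exponential in $N^{1/2}$, for any fixed $T$. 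This contradicts \eqref{eq-obs-vol}, and $T>0$ was arbitrary.

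The technical core — and the step I expect to be the main obstacle — is the wave-packet estimate above: one must control the oscillatory sum $\sum_n e^{-(n-N)^2/N}e^{-\l_1(n)t}v_n(y)e^{in(x-x_0)}$, whose complex "phase" $n(x-x_0)-\Im\l_1(n)t+\arg v_n(y)$ has derivative $\gtrsim d_0$ for $x\in\omega_x$, and show that the Gaussian weight promotes the resulting non-stationarity to a bound exponentially small in $N$, uniformly in $t\in[0,T]$ and $y\in\overline I$ — while checking simultaneously that the real part of the quadratic coefficient obtained by completing the square stays positive (so that the packet neither collapses nor disperses over all of $\T$ before time $T$). Both points require controlling, uniformly on the window $\abs{n-N}\lesssim N^{1/2}\log N$ and for $y\in\overline I$, the remainders in the asymptotics of $\l_1(n)$ and $v_n$, i.e. the fine spectral properties of the family $\Ac_n$; this quantitative non-stationary-phase-with-Gaussian-weight argument for a complex-valued phase is where the real work lies.
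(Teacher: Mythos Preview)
This theorem is not proved in the present paper: it is quoted from \cite{koenig18} as background (see the attribution in the theorem heading), so there is no ``paper's own proof'' to compare against. Your proposal is therefore an attempt to reprove Koenig's result, not to fill in an argument from this article.

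As to the approach itself: the strategy of building an $x$-localized wave packet out of the first eigenpairs $(\lambda_1(n),v_n)$ of $\mathcal A_n=-\partial_{yy}+iny^2$ is reasonable in spirit, and the heuristic scaling (Gaussian in $n$ of width $\sqrt N$, hence width $N^{-1/2}$ in $x$; transport speed $\sim y^2\sim N^{-1/2}$ near the concentration zone) correctly predicts that the packet does not leave a neighbourhood of $x_0$ in bounded time. However, the execution you outline has a real gap precisely where you flag it. The $v_n$ depend on $n$, and to turn the formal non-stationary-phase picture into the claimed bound $|u^{(N)}(t,x,y)|\lesssim \mathrm{poly}(N)\,e^{-cNd_0^2}$ on $\omega_x\times I$ you need quantitative control of $\partial_n v_n$ and $\partial_n^2 v_n$ in $L^\infty(I)$ (or at least pointwise with Agmon weights), uniformly over $|n-N|\lesssim \sqrt N\log N$. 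Differentiating the eigenvalue equation in $n$ leads to a resolvent problem for $\mathcal A_n-\lambda_1(n)$ acting on the orthogonal complement of $v_n$; since $\mathcal A_n$ is non-selfadjoint, bounding this inverse uniformly in $n$ is not free and is exactly the kind of pseudospectral issue that makes these operators delicate. Without that, the summation-by-parts / completed-square step does not close.

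For comparison, Koenig's original argument in \cite{koenig18} sidesteps the $n$-dependence of $v_n$ by working instead with explicit Gaussian profiles (coherent states for the complex harmonic oscillator) that are \emph{not} exact eigenfunctions but whose evolution under $e^{-t\mathcal A_n}$ can be computed or estimated directly, together with a holomorphy argument in the $x$-variable. That route avoids having to differentiate eigenfunctions in the spectral parameter, which is the step your sketch leaves open.
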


Furthermore, it is shown that a minimal time is needed for the system to be possibly observable for most of observation sets $\omega$.

\begin{theorem}[\cite{koenig18}] \label{th-AK2}
Let $\omega$ be an open subset of $\T \times I$. Suppose that there exists $\tilde x \in \T$ and $a > 0$ such that
$$\left\lbrace (\tilde x ,y), \ y \in (-a,a) \right\rbrace \cap \overline{\omega} = \emptyset.$$
Then system \eqref{eq-Kolm-ysquared} is not observable through $\omega$ in any time $T< \frac{a^2}{2}$.
\end{theorem}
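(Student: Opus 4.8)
The plan is to contradict the observability inequality \eqref{eq-obs-vol}: for every $T<\tfrac{a^2}{2}$ I will produce a sequence of solutions of \eqref{eq-Kolm-ysquared} whose energy at time $T$ is exponentially larger than the energy they deposit on $\omega$ during $[0,T]$. Translating in $x$, we may take $\tilde x=0$; since $T<\tfrac{a^2}{2}$, fix $a'$ with $\sqrt{2T}<a'<a$, so that the compact segment $\{0\}\times[-a',a']$ lies at a distance $\delta>0$ from the closed set $\overline\omega$. Decomposing solutions in Fourier series in $x$, the frequency-$n$ mode is driven by the non-selfadjoint operator $\Ac_n=-\partial_{yy}+iny^2$ on $I$ with Dirichlet conditions, and the argument rests on the spectral properties of $\Ac_n$ that are central to this paper: as $n\to+\infty$, $\Ac_n$ has a simple eigenvalue $\lambda_n$ of smallest real part, with $\Re\lambda_n=\tfrac{1}{\sqrt2}\sqrt n\,(1+o(1))$; the associated eigenfunction $\phi_n$, normalized in $L^2(I)$, satisfies the \emph{sharp} Gaussian (Agmon) bound $\abs{\phi_n(y)}\le C\,n^{1/8}\,e^{-\frac{1}{2\sqrt2}\sqrt n\,y^2}$ on $I$; and $n\mapsto\lambda_n$, $n\mapsto\phi_n$ are regular, with logarithmic derivative in $n$ of order $n^{-1/2}$, uniformly.

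Next I build the test functions. Let $c_n=e^{-(n-N)^2/(2N)}$ and set
\[
u_N(t,x,y)=\sum_{n>0}c_n\,e^{inx}\,e^{-\lambda_n t}\,\phi_n(y),
\]
a convergent series solving \eqref{eq-Kolm-ysquared} with the correct boundary condition (each term does, using $\Ac_n\phi_n=\lambda_n\phi_n$ and $\phi_n|_{\partial\O}=0$). The Gaussian weight has width $\sqrt N$ in $n$, so that in the variable $x$ the function $u_N$ is a wave packet of width $\sim N^{-1/2}$ centred essentially at $x=0$: since $e^{-\lambda_n t}\phi_n(y)$ has $n$-logarithmic derivative of order $N^{-1/2}$ uniformly for $t\in[0,T]$ and $y\in I$, it only shifts the centre of the Gaussian by $o(1)$ and alters its width and height by bounded-times-polynomial factors. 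Consequently, for $x$ at distance at least $\delta$ from $0$ on $\T$,
\[
\abs{u_N(t,x,y)}\ \le\ P(N)\,e^{-\kappa N\delta^2}\qquad(t\in[0,T],\ y\in I),
\]
with $\kappa>0$ and $P$ polynomial.

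Two estimates then conclude. First, orthogonality of $(e^{inx})_{n\in\Z}$ on $\T$ diagonalizes the $L^2(\O)$ norm:
\[
\nr{u_N(T)}_{L^2(\O)}^2\ =\ 2\pi\sum_{n>0}c_n^2\,e^{-2T\Re\lambda_n}\ \ge\ \kappa_0\,\sqrt N\;e^{-\sqrt2\,T\sqrt N}.
\]
Second, split $\omega=\big(\omega\cap\{\abs y\ge a'\}\big)\cup\big(\omega\cap\{\abs y<a'\}\big)$. On the first piece, the Gaussian bound for each $\phi_n$ evaluated at $\abs y\ge a'$ (together with $\abs{e^{-\lambda_n t}}\le1$) gives $\abs{u_N(t,x,y)}\le P(N)\,e^{-\frac{1}{2\sqrt2}\sqrt N(a')^2}$, hence a contribution $\le P(N)\,e^{-\frac{1}{\sqrt2}\sqrt N(a')^2}$ to $\int_0^T\nr{u_N(t)}_{L^2(\omega)}^2\,dt$; on the second piece, where $x$ stays at distance $\ge\delta$ from $0$, the wave-packet bound yields a contribution $\le P(N)\,e^{-2\kappa N\delta^2}$. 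Therefore, for $N$ large,
\[
\frac{\nr{u_N(T)}_{L^2(\O)}^2}{\displaystyle\int_0^T\nr{u_N(t)}_{L^2(\omega)}^2\,dt}\ \ge\ \frac{\kappa_0\,\sqrt N\;e^{-\sqrt2\,T\sqrt N}}{P(N)\big(e^{-\frac{1}{\sqrt2}\sqrt N(a')^2}+e^{-2\kappa N\delta^2}\big)}\ \xrightarrow[\;N\to+\infty\;]{}\ +\infty,
\]
because $\tfrac{1}{\sqrt2}(a')^2>\sqrt2\,T$ (i.e. $(a')^2>2T$) beats the first term in the denominator and $N\delta^2\gg\sqrt N$ beats the second; the $o(1)$ corrections in the exponents are immaterial since $(a')^2-2T>0$ strictly. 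This contradicts \eqref{eq-obs-vol}, so \eqref{eq-Kolm-ysquared} is not observable through $\omega$ in time $T$.

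I expect the main difficulty to be organizing the simultaneous concentration near $y=0$ and near $x=0$. A single quasimode $e^{inx}e^{-\lambda_n t}\phi_n(y)$ only displays the smallness on $\{\abs y\ge a'\}$; that already disproves observability for horizontal strips $\omega=\T\times(a,b)$, and is precisely the mechanism behind the lower bound $\Tc\ge\tfrac{a^2}{2}$ in Theorem \ref{th-KB}. For a general $\omega$ one must additionally annihilate the mass of the solution near the axis $y=0$, which is possible only by localizing in $x$; and the Gaussian packet has to be taken of width of order exactly $\sqrt N$ in $n$ --- wide enough that the resulting $x$-decay rate $e^{-\kappa N\delta^2}$ overwhelms the dissipation rate $e^{-\sqrt2\,T\sqrt N}$ of the $n$-th mode at time $T$ (using $N\gg\sqrt N$), yet narrow enough that $\lambda_n$ and $\phi_n$ change only mildly over the packet, so that it stays genuinely localized and both exponential rates remain clean. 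A secondary, more classical point is sharpness: the threshold $\tfrac{a^2}{2}$ is exactly the ratio of the constant in $\Re\lambda_n\sim\tfrac{1}{\sqrt2}\sqrt n$ to the constant in $\abs{\phi_n(y)}^2\lesssim e^{-\frac{1}{\sqrt2}\sqrt n\,y^2}$, so the Agmon-type estimate must be invoked with its optimal constant.
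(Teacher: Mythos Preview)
First, note that this theorem is not proved in the present paper: it is quoted from \cite{koenig18} as context for Theorem~\ref{th-main}, and no proof is given here. So there is nothing to compare your argument against in this text. I will nonetheless comment on the mathematics.

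Your overall strategy --- superpose Fourier modes with a Gaussian envelope of width $\sqrt N$ centred at $n=N$, so that the solution concentrates both near $y=0$ (Agmon) and near $x=0$ (wave packet), then compare exponents --- is the right picture and matches the philosophy of \cite{koenig18}. The exponent bookkeeping you give (dissipation $e^{-\sqrt 2 T\sqrt N}$ versus Agmon $e^{-\frac{1}{\sqrt 2}\sqrt N(a')^2}$, plus an $e^{-cN}$ term from $x$-localisation) is also correct and does produce the threshold $a^2/2$.

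The genuine gap is the $x$-localisation step. You assert that ``$n\mapsto\lambda_n$, $n\mapsto\phi_n$ are regular, with logarithmic derivative in $n$ of order $n^{-1/2}$, uniformly'' and then that this ``only shifts the centre of the Gaussian by $o(1)$''. Nothing of the sort is proved in this paper (Propositions~\ref{prop-lambda-n}--\ref{prop-psi-Linf} give the asymptotics of $\lambda_n$ and Agmon bounds for a \emph{fixed} $n$, but no control on differences $\phi_{n+1}-\phi_n$ or on any smooth interpolation), and it is not automatic: the $\phi_n$ are eigenfunctions of a family of non-selfadjoint operators, defined only up to phase, and turning the heuristic ``slowly varying amplitude'' into a rigorous stationary-phase/Poisson-type estimate for $\sum_n c_n e^{inx}e^{-\lambda_n t}\phi_n(y)$ requires real work. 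Without it, the bound $|u_N(t,x,y)|\le P(N)e^{-\kappa N\delta^2}$ on $\{|x|\ge\delta,\ |y|<a'\}$ is unproved, and the whole argument collapses on the part of $\omega$ lying in the strip $|y|<a'$.

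The standard cure --- and what makes the argument in \cite{koenig18} go through --- is to avoid the exact Dirichlet eigenfunctions altogether and use instead the \emph{explicit} ground state of the complex harmonic oscillator on $\R$, namely $\psi_n(y)=c\,n^{1/8}\exp\big(-\tfrac{e^{i\pi/4}}{2}\sqrt n\,y^2\big)$, truncated by a fixed cut-off near $\partial I$. Then every $n$-dependence is explicit (phase $-\tfrac{1}{2\sqrt 2}\sqrt n\,y^2$, modulus $n^{1/8}e^{-\tfrac{1}{2\sqrt 2}\sqrt n\,y^2}$), the packet computation is a genuine Gaussian sum, and the price is only a source term supported where $|y|$ is bounded away from $0$, which is harmless by the same Agmon-type decay. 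If you want to keep the exact $\phi_n$, you must supply a proof of the smooth (or at least Lipschitz in $n$) dependence you invoke; that is a separate, nontrivial spectral-perturbation argument not contained here.
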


In the present paper, we investigate the observability properties of 
\eqref{eq-Kolmogorov} with a more general coefficient $q(y)^2$, when the domain of observation is the boundary 
\[
\Gamma = \partial \O = \T \times \left\lbrace -\ell_-, \ell_+ \right\rbrace.
\]
We could similarly consider observation through an open subset $\o$ given by horizontal strips of $\O$. Our main result is the following:

\begin{theorem} \label{th-main}
We set 
$$
\Tmin =  \frac 1 {q'(0)} \min\left( \int_0^{\ell_+} q(s) \, ds , \int_{-\ell_-}^0 \vert q(s) \vert\, ds \right),
$$
and 
$$
\Tmax =  \frac 1 {q'(0)} \max\left( \int_0^{\ell_+} q(s) \, ds , \int_{-\ell_-}^0 \vert q(s) \vert\, ds \right).
$$
There exists $\Tc \in [\Tmin,\Tmax]$ such that
\begin{enumerate} [\rm (i)]
\item if $T > \Tc$, the problem \eqref{eq-Kolmogorov} is observable through $\Gamma$, \label{item-th-i}
\item if $T < \Tc$, the problem \eqref{eq-Kolmogorov} is not observable through $\Gamma$. \label{item-th-ii}
\end{enumerate}
\end{theorem}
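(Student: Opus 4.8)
The plan is to use the Fourier decomposition in the $x$ variable, which is standard for Kolmogorov-type equations on $\T \times I$. Writing $u(t,x,y) = \sum_{n \in \Z} u_n(t,y) e^{inx}$, each mode $u_n$ solves a one-dimensional equation $\partial_t u_n - \partial_{yy} u_n + i n q(y)^2 u_n = 0$ on $I$ with Dirichlet boundary conditions. Observability of \eqref{eq-Kolmogorov} through $\Gamma$ is then equivalent to a uniform (in $n$) observability estimate for these one-dimensional problems from the endpoints $y = -\ell_-$ and $y = \ell_+$. By duality/adjointness this reduces to a uniform dissipation (spectral gap) statement: decomposing $u_n(t) = e^{-t \Lc_n} u_{n,0}$ where $\Lc_n = -\partial_{yy} + i n q(y)^2$ is a non-selfadjoint Schr\"odinger operator, the key quantity governing both the positive and negative results is the real part of the first eigenvalue $\lambda_1(n)$ of $\Lc_n$, and more precisely its growth rate as $|n| \to \infty$, call it $\beta(n) = \Re \lambda_1(n)$. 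Actually since $|n|$ is the relevant large parameter, after rescaling $y \mapsto |n|^{-1/3}$-type variables one expects $\Re \lambda_1(n) \sim c\, |n|^{1/3}$ near $y=0$; but the boundary plays a role through the finite domain $I$, and the correct statement involves comparing the decay that can be extracted before the contribution of the modes localized near $y=0$ is dominated.

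The structure I would follow is: \textbf{(1)} Set up the Fourier reduction and state the equivalence between boundary observability and a family of one-dimensional observability inequalities, uniform in $n$, tracking carefully the boundary term $\partial_\nu u = \partial_y u_n$ at the two endpoints. \textbf{(2)} For the \emph{positive} result (item \eqref{item-th-i}), I would combine two ingredients: a Carleman estimate for the one-dimensional operator $\partial_t - \partial_{yy} + inq(y)^2$ that gives observability for the modes with $n$ bounded (or more generally handles the ``parabolic smoothing'' part), and, for $|n|$ large, a dissipation estimate showing $\nr{u_n(t)}_{L^2(I)} \leq e^{-\gamma(n) t}\nr{u_{n,0}}_{L^2(I)}$ where $\gamma(n) \to +\infty$, so that after any time $T > \Tc$ the high modes are already so small that a crude observation recovers them. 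The rate $\gamma(n)$ is controlled from below by $\Re \lambda_1(n)$, which in turn is controlled by the Agmon-type estimates and the localization of the first eigenvalue referenced in the abstract; the threshold $\Tmin$ (resp.\ $\Tmax$) emerges because the eigenfunction concentrates near whichever endpoint is ``closer'' in the Agmon metric $\int_0^y q(s)\,ds$, and the relevant WKB/Agmon weight is exactly $\frac{1}{q'(0)}\int_0^{\cdot} q(s)\,ds$. \textbf{(3)} For the \emph{negative} result (item \eqref{item-th-ii}), I would construct explicit approximate solutions concentrated on the first eigenmode of $\Lc_n$ for large $n$: take $u_n(t,y) = e^{-\lambda_1(n) t}\varphi_n(y)$ with $\varphi_n$ the (normalized) first eigenfunction. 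The Agmon estimates force $|\partial_y \varphi_n|$ at the endpoints to be exponentially small in $n$ compared to $\nr{\varphi_n}_{L^2}$, with exponential rate governed by the Agmon distance from $0$ to the endpoint; meanwhile $\Re\lambda_1(n)$ grows like $n$ times (distance-related constant). Balancing, for $T$ below the critical value the left-hand side $\nr{u_n(T)}^2 = e^{-2T\Re\lambda_1(n)}$ decays strictly slower than the boundary term $\int_0^T \nr{\partial_y\varphi_n(t)}^2\,dt$, contradicting \eqref{eq-obs-Gamma} with a uniform constant. The existence of a single well-defined $\Tc$ (rather than just an interval of ambiguity) follows from the monotonicity of observability in $T$: the set of $T$ for which the problem is observable is an interval of the form $(\Tc,\infty)$ or $[\Tc,\infty)$, so $\Tc$ is simply its infimum, and steps (2)--(3) sandwich it in $[\Tmin,\Tmax]$.

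The main obstacle, and the technical heart of the paper, is the precise spectral analysis of the non-selfadjoint family $\Lc_n = -\partial_{yy} + inq(y)^2$ on $I$ with Dirichlet conditions: one needs (a) a sharp lower bound $\Re\lambda_1(n) \geq c|n|(1+o(1))$ with the \emph{correct} constant $c$ tied to $q'(0)$ and the geometry, not just an order-of-magnitude estimate, and (b) matching Agmon-type decay estimates for $\varphi_n$ that are tight enough that the positive and negative arguments meet. The non-selfadjointness means the usual variational characterization of eigenvalues is unavailable; instead one likely works with the complex WKB method or with a rescaling $\Lc_n = |n|^{2/3}(-\partial_{ys}^2 + i\,\mathrm{sgn}(n)\, q(|n|^{-1/3}\cdot\,)^2)$ reducing near $y=0$ to the complex Airy / ``Davies'' operator $-\partial^2 + i y^2$ on a half-line or interval, whose first eigenvalue and eigenfunction decay are explicitly known, and then transplants this local model to the full interval via Agmon estimates — the error terms from the non-constancy of $q$ and from the far endpoint must be shown to be lower order. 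A secondary difficulty is making the Carleman estimate in step (2) robust enough to glue with the high-frequency dissipation estimate with a uniform constant across all $n$, which is why the statement only gives $\Tc \in [\Tmin,\Tmax]$ and not an exact value in the non-symmetric case: the two endpoints' contributions interfere and the exact $\Tc$ would require understanding the full (not just leading-order) spectral behavior.
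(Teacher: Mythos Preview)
Your overall architecture is right and matches the paper: Fourier decomposition in $x$, a positive result combining Carleman estimates with semigroup decay for the one-dimensional operators $K_n = -\partial_{yy} + inq(y)^2$, a negative result by testing the observability inequality against $e^{-\lambda_n t} e^{inx}\psi_n(y)$ with $\psi_n$ the first eigenfunction, and the definition of $\Tc$ as the infimum of observable times. That skeleton is exactly what the paper does.

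However, your quantitative picture is off by a crucial scaling, and this would break both halves of the argument. The potential $inq(y)^2$ is \emph{quadratic} near its minimum $y=0$, so the local model is the complex \emph{harmonic oscillator} $-\partial_{yy} + inq'(0)^2 y^2$, not a complex Airy operator; the correct rescaling is $y \mapsto n^{-1/4}y$, giving eigenvalues of order $\sqrt{n}$, not $n^{1/3}$ or $n$. Concretely, the paper shows
\[
\lambda_n = \sqrt{n}\, q'(0) e^{i\pi/4} + o(\sqrt n), \qquad \Re(\lambda_n) \sim \frac{q'(0)}{\sqrt 2}\sqrt n,
\]
and the Agmon estimate reads $|\psi_n'(\pm\ell_\pm)|^2 \lesssim n\, e^{-\sqrt{2n}(1-\varepsilon)q'(0)\Tmin}$. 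The critical time arises precisely from balancing the two $\sqrt n$ exponents: $e^{-2T\Re(\lambda_n)}$ against $e^{-\sqrt{2n}\,q'(0)\Tmin}$. With your claimed $\Re\lambda_1(n)\sim c|n|$, the left side would decay like $e^{-cTn}$ and would always beat the Agmon term $e^{-c'\sqrt n}$, so no lower bound on $T$ would emerge; conversely the positive argument would never close because the Carleman cost (which is $e^{2\kappa\sqrt n}$ in the paper) cannot be absorbed by an $e^{-cn}$ decay with the right constant identification.

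Two smaller corrections. First, the eigenfunction $\psi_n$ concentrates near $y=0$ (the bottom of $q^2$), not near an endpoint; the Agmon distance $\frac{1}{\sqrt 2}\int_0^{\pm\ell_\pm}|q|$ measures how small $\psi_n'$ is \emph{at} each endpoint, and $\Tmin$ is the smaller of these two distances divided by $q'(0)$. Second, the Agmon weight is $\sim \frac{\sqrt n}{\sqrt 2}\int_0^y |q|$, with no $1/q'(0)$ in it; the factor $1/q'(0)$ in $\Tmin,\Tmax$ comes from dividing by the decay rate $q'(0)/\sqrt 2$ of the semigroup. The complex Airy operators do appear in the paper, but only as local models \emph{away} from $y=0$ in the resolvent-approximation argument, not as the governing model for $\lambda_n$.
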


In particular, in any configuration for which $\Tmax=\Tmin$, we obtain the critical time needed
for observability of equation \eqref{eq-Kolmogorov} to hold. This is in particular the case for symmetric configurations:

\begin{theorem} \label{th-cas-symetrique}
Suppose $\ell_- = \ell_+$ and $q$ is odd. Let 
$$
\Tc = \frac{1}{q'(0)} \int_0^{\ell_+} q(s) \, ds.
$$
Then
\begin{enumerate} [\rm (i)]
\item if $T > \Tc$ the problem \eqref{eq-Kolmogorov} is observable through $\Gamma$,
\item if $T < \Tc$ the problem \eqref{eq-Kolmogorov} is not observable through $\Gamma$.
\end{enumerate}
\end{theorem}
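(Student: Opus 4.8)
The plan is to obtain Theorem \ref{th-cas-symetrique} directly from Theorem \ref{th-main}, the only thing to check being that the symmetry hypotheses force the two bounds $\Tmin$ and $\Tmax$ to coincide.

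First I would compute the constant associated with the negative side. Since $q(0) = 0$ and $\min_{y \in I} q'(y) > 0$, we have $q > 0$ on $]0,\ell_+]$; and since $q$ is odd and $\ell_- = \ell_+$, the substitution $s = -\sigma$ gives
\[
\int_{-\ell_-}^0 \vert q(s)\vert\, ds = \int_0^{\ell_+} \vert q(-\sigma)\vert\, d\sigma = \int_0^{\ell_+} \vert {-q(\sigma)}\vert\, d\sigma = \int_0^{\ell_+} q(\sigma)\, d\sigma .
\]
Hence the two arguments of the $\min$ and of the $\max$ in the definitions of $\Tmin$ and $\Tmax$ are equal, so that
\[
\Tmin = \Tmax = \frac{1}{q'(0)}\int_0^{\ell_+} q(s)\, ds = \Tc .
\]

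It then suffices to invoke Theorem \ref{th-main}: it produces a critical time lying in $[\Tmin,\Tmax]$, which in the present situation is the single point $\Tc$. Assertions (i) and (ii) of Theorem \ref{th-cas-symetrique} are then exactly assertions (i) and (ii) of Theorem \ref{th-main} for this value of $\Tc$, and the behaviour at $T = \Tc$ itself remains open, just as in Theorem \ref{th-main}.

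There is no genuine obstacle at this stage: all the analytic work — the Carleman estimates and the spectral analysis of the associated non-selfadjoint Schr\"odinger operators — is carried out in the proof of Theorem \ref{th-main}, and the symmetric case is merely the specialization in which the lower and upper estimates for the critical time match. The single point requiring a line of care is the elementary sign bookkeeping in the integral identity above, which is immediate.
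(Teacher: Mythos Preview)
Your proposal is correct and is exactly the approach taken in the paper: Theorem \ref{th-cas-symetrique} is presented there as an immediate consequence of Theorem \ref{th-main}, via the observation that the symmetry hypotheses force $\Tmin = \Tmax$. The paper does not even spell out the change-of-variable computation you give, so your write-up is if anything slightly more detailed.
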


Note that in the case $q(y) = y$, the critical time is $\Tc = \frac{\ell_+^2}{2}$. This is the analog for the observation from the boundary of the time $\frac {a^2}2$ which appears in Theorems \ref{th-KB} and \ref{th-AK2}. Theorem \ref{th-cas-symetrique} is, 
up to our knowledge, the first result giving the precise value of the critical time for the observation 
of a two-dimensional Kolmogorov-type equation.

\begin{remark}
By a classical duality argument, Theorem \ref{th-main} is equivalent to controlability properties for the adjoint equation, with 
a boundary Dirichlet control acting on $\Gamma$. We refer to \cite{tucsnak} for details on this equivalence.
\end{remark}

\subsection*{Outline of the paper} The article is organized as follows. After this introduction, we give in Section \ref{sec-strategy} the main ideas for the proof of Theorem \ref{th-main}. The details are then given in the following two sections. In Section \ref{sec-spectral} we discuss the well-posedness of the problem \eqref{eq-Kolmogorov} and we prove some spectral properties for the non-selfadjoint Schr\"odinger operator $K_n = -\partial_{yy} + inq(y)^2$ which naturally appears in the analysis. We prove Agmon-type estimates for the first eigenfunction, which gives the negative result for $T < \Tmin$, and we estimate the decay of the corresponding semigroup. Finally, in Section \ref{sec-Carleman}, we prove a Carleman estimate and deduce an observability estimate in arbitrarily small time which depends on the frequency $n$ with respect to $x$. Together with the decay properties of $e^{-tK_n}$, this will give the observabililty of \eqref{eq-Kolmogorov} for $T > \Tmax$.

\section{Strategy of the proof} \label{sec-strategy}

In this section we describe the strategy for the proof of Theorem \ref{th-main}. We only give the mains ideas, and the details will be postponed to the following two sections.

\subsection{Well-posedness and Fourier transform of the Kolmorgorov equation}

Before discussing the properties of the solutions of \eqref{eq-Kolmogorov}, we check that this problem is well posed.

\begin{proposition} \label{prop-well-posedness}
Let $\uo \in L^2(\O)$. Then there exists a unique 
\[
u \in C^0 \big( [0,T],L^2(\O) \big) \cap C^0 \big( ]0,T],H^2(\O) \cap H^1_0(\O) \big) \cap C^1 \big( ]0,T], L^2(\O) \big)
\]
which satisfies \eqref{eq-Kolmogorov} with $u(0) = \uo$.
\end{proposition}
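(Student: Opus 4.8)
The natural approach is to diagonalize in the periodic variable $x$. Writing $u(t,x,y)=\sum_{n\in\Z}u_n(t,y)\,e^{inx}$, the system \eqref{eq-Kolmogorov} decouples into the one-dimensional equations $\partial_t u_n + K_n u_n = 0$ on $I$ with Dirichlet boundary conditions, where $K_n = -\partial_{yy} + in q(y)^2$ on $L^2(I)$, with $\Dom(K_n)=H^2(I)\cap H^1_0(I)$ (since $q\in C^0(\bar I)$, the multiplication operator $inq^2$ is bounded, so the domain is that of $-\partial_{yy}$). First I would check that $-K_n$ generates a contraction $C_0$-semigroup $e^{-tK_n}$ on $L^2(I)$, via Lumer--Phillips: integration by parts gives $\Re\innps{K_n v}{v}_{L^2(I)}=\nr{v'}_{L^2(I)}^2\geq 0$ for $v\in\Dom(K_n)$ (the boundary terms vanishing by the Dirichlet condition), and $K_n+\lambda$ is onto for $\Re\lambda>0$ because $K_n$, a bounded perturbation of $-\partial_{yy}$, has compact resolvent while the same computation shows $K_n+\lambda$ is injective. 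Moreover, $-K_n$ being a bounded perturbation of the generator $\partial_{yy}$ of an analytic semigroup, $e^{-tK_n}$ is itself analytic; hence $u_n(t):=e^{-tK_n}\uon\in\bigcap_{k\geq1}\Dom(K_n^k)$ and $t\mapsto u_n(t)$ is $C^\infty$ from $]0,\infty[$ into $L^2(I)$.

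Since the contraction bound $\nr{e^{-tK_n}}_{\Lc(L^2(I))}\leq 1$ is uniform in $n$, Parseval's identity lets me recombine these into a contraction semigroup $e^{-tK}$ on $L^2(\O)\cong\ell^2(\Z,L^2(I))$, with generator $-K$, $K=q(y)^2\partial_x-\partial_{yy}$ (read through the Fourier decomposition, equivalently with domain $\{u\in L^2(\O): q^2\partial_x u-\partial_{yy}u\in L^2(\O),\ u|_{\partial\O}=0\}$). This immediately provides the solution $u(t)=e^{-tK}\uo\in C^0([0,T],L^2(\O))$ with $u(0)=\uo$.

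The substantial point is the smoothing for $t>0$: that $u(t)\in H^2(\O)\cap H^1_0(\O)$ and $u\in C^1(]0,T],L^2(\O))$. Mode by mode this holds --- analyticity gives $u_n(t)\in\Dom(K_n)\subset H^2(I)\cap H^1_0(I)$, and rewriting $-u_n''=K_nu_n-inq^2u_n$ yields the elliptic bound $\nr{u_n(t)}_{H^2(I)}\leq C\big(\nr{K_nu_n(t)}_{L^2(I)}+|n|\,\nr{u_n(t)}_{L^2(I)}\big)$ --- but to sum the Fourier series in $H^2(\O)$ (and to differentiate it in $t$) one needs the operator norms $\nr{e^{-tK_n}}_{\Lc(L^2(I))}$ and $\nr{K_ne^{-tK_n}}_{\Lc(L^2(I))}$ to be not merely finite but, for each fixed $t>0$, to decay in $n$ faster than any power. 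These are precisely the frequency-dependent decay estimates for the non-selfadjoint semigroup $e^{-tK_n}$ established in Section \ref{sec-spectral}, resting on the fact that the first eigenvalue of $K_n$ has real part of order $\sqrt{|n|}$ and on Agmon-type localization of the corresponding eigenfunctions. Combined with the mode-wise analytic-semigroup bound $\nr{K_ne^{-tK_n}}_{\Lc(L^2(I))}\leq C_n/t$ (with $C_n$ at most polynomial in $n$) and the elliptic inequality above, they give summable tails, so the partial sums converge in $C^0(]0,T],H^2(\O)\cap H^1_0(\O))$ and, after one application of $K$, in $C^0(]0,T],L^2(\O))$. I expect this to be the main obstacle: the instantaneous $L^2\to H^2$ gain cannot be obtained by soft parabolic energy estimates (which only propagate Sobolev regularity), but genuinely uses the non-selfadjoint spectral analysis of $K_n$ --- consistently with the fact that $-K$ is not the generator of an analytic semigroup on $L^2(\O)$ and that the equation exhibits a finite critical time.

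Uniqueness is then elementary: if $u$ lies in the stated class with $\uo=0$, then for $t>0$ one has $u(t)\in H^1_0(\O)$ with $q^2\partial_x u(t)-\partial_{yy}u(t)\in L^2(\O)$, and integrating by parts in $y$ (the contribution of $q^2\partial_x u$ being purely imaginary by periodicity in $x$),
\[
\frac{d}{dt}\nr{u(t)}_{L^2(\O)}^2 = -2\,\Re\innps{Ku(t)}{u(t)}_{L^2(\O)} = -2\,\nr{\partial_y u(t)}_{L^2(\O)}^2 \leq 0,
\]
so $\nr{u(t)}_{L^2(\O)}$ is nonincreasing on $]0,T]$ and tends to $\nr{\uo}_{L^2(\O)}=0$ as $t\to0^+$; hence $u\equiv0$.
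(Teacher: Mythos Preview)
Your approach is essentially that of the paper: Fourier-decompose in $x$, show each $K_n$ is maximal accretive (hence generates a contraction semigroup), reassemble into $e^{-tK}$, and obtain the $L^2\to H^2$ smoothing by combining a mode-wise bound on $\nr{K_ne^{-tK_n}}$ with the $n$-dependent decay of Proposition~\ref{prop-decay-expKn}. Two small corrections are in order. First, the decay in Proposition~\ref{prop-decay-expKn} is not obtained from Agmon-type localization but from uniform resolvent bounds (Proposition~\ref{prop-spectre-Kn}, comparing $K_n$ with model complex harmonic-oscillator and Airy operators) fed into a Gearhart--Pr\"uss type argument; the Agmon estimates in Section~\ref{sec-Agmon} serve the \emph{negative} result (Proposition~\ref{prop-ln-psin}), not the smoothing. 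Second, your assertion that the analytic-semigroup constant $C_n$ in $\nr{K_ne^{-tK_n}}\leq C_n/t$ is at most polynomial in $n$ is correct but is not an off-the-shelf citation, since the perturbation $inq^2$ has norm of order $n$; the paper makes this explicit via a direct parabolic energy computation (proof of Proposition~\ref{prop-regularite}), obtaining $\nr{\partial_t u_n(2\tau)}^2\lesssim(\tau^{-2}+n^2)\nr{u_n(\tau)}^2$, after which the exponential decay in $\sqrt n$ absorbs the $n^2$.
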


Notice in particular that the equation is regularizing, so we do not have to impose any regularity on the initial condition to get a solution in the strong sense.\\

Many argument in our analysis, including the proof of Proposition \ref{prop-well-posedness}, will be based on a Fourier transform. All along the paper, the Fourier coefficients are taken with respect to the variable $x \in \T$. Given $u \in L^2(\O)$, we denote by $u_n \in \ell^2(\Z,L^2(I))$ the sequence of Fourier coefficients of $u$:
\[
u(x,y) = \sum_{n\in \mathbb{Z}} u_n(y) e^{i n x}, \quad u_n (y) = \frac 1 {2\pi} \int_\T e^{-inx} u(x,y) \, dx.
\]
The same applies if $u$ (and then the $u_n$, $n \in \Z$) are also functions of the time $t$.\\

For $n \in \Z$ we consider the problem
\begin{equation} \label{eq-Kolmogorov-n}
\begin{cases}
\partial_t u_n - \partial_{yy} u_n + inq(y)^2 u_n = 0, & \text{on } ]0,T[ \times I,\\
u_n(t,-\ell_-) = u_n(t,\ell_+) = 0, & \text{for } t \in ]0,T[,\\
u_n(0)  \in L^2(I).
\end{cases}
\end{equation}
Then the Fourier coefficients of a solution of \eqref{eq-Kolmogorov} are given by the solutions of \eqref{eq-Kolmogorov-n}. 

\begin{proposition} \label{prop-well-posedness-n}
Let $u$ be a solution of \eqref{eq-Kolmogorov} and let $u_n$, $n \in \Z$, be the corresponding Fourier coefficients. Then for all $n \in \Z$ we have
\[
u_n \in C^0 \big( [0,T],L^2(I) \big) \cap C^0 \big( ]0,T],H^2(I) \cap H^1_0(I) \big) \cap C^1 \big( ]0,T], L^2(I) \big),
\]
and $u_n$ is the unique solution of \eqref{eq-Kolmogorov-n} with $u_n(0) = u_{\mathsf{o},n}$, where $u_{\mathsf{o},n}$ is the $n$-th Fourier coefficient of $\uo = u(0)$.
\end{proposition}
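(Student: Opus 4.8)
The plan is to transport both the claimed regularity and the equation through the Fourier transform in $x$, and then to establish uniqueness for \eqref{eq-Kolmogorov-n} by a direct energy estimate. For $n \in \Z$ write $\Pi_n : u \mapsto u_n = \frac{1}{2\pi}\int_\T e^{-inx} u(x,\cdot)\, dx$ for the $n$-th Fourier coefficient operator. The first step is to record that, for each fixed $n$, $\Pi_n$ is bounded from $L^2(\O)$ to $L^2(I)$, from $H^1(\O)$ to $H^1(I)$ and from $H^2(\O)$ to $H^2(I)$ (this is immediate from the expansion of the $H^k(\O)$-norm in Fourier modes), and that it maps $H^1_0(\O)$ into $H^1_0(I)$, since the Dirichlet trace of $u$ on $\T \times \{-\ell_-,\ell_+\}$ is $\sum_m u_m(\pm) e^{imx}$, which vanishes iff $u_m(-\ell_-) = u_m(\ell_+) = 0$ for every $m$. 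Since $u$ is a solution of \eqref{eq-Kolmogorov}, it lies in $C^0([0,T],L^2(\O)) \cap C^0(]0,T],H^2(\O)\cap H^1_0(\O)) \cap C^1(]0,T],L^2(\O))$; composing with the bounded linear operator $\Pi_n$ gives at once
\[
u_n \in C^0([0,T],L^2(I)) \cap C^0(]0,T],H^2(I)\cap H^1_0(I)) \cap C^1(]0,T],L^2(I)),
\]
and, as $\Pi_n$ commutes with $\partial_t$, one has $\partial_t u_n = \Pi_n(\partial_t u)$ on $]0,T]$ and, by continuity at $t=0$, $u_n(0) = \Pi_n \uo = u_{\mathsf{o},n}$.

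The second step identifies the equation satisfied by $u_n$. For $t \in ]0,T[$ we have $u(t) \in H^2(\O)$, hence $\partial_x u(t) \in H^1(\O)$ with $\Pi_n(\partial_x u(t)) = i n\, u_n(t)$; since $q(y)^2$ depends on $y$ only, $\Pi_n\big(q(y)^2 \partial_x u(t)\big) = i n\, q(y)^2 u_n(t)$, and likewise $\Pi_n(\partial_{yy} u(t)) = \partial_{yy} u_n(t)$. Applying $\Pi_n$ to the identity $\partial_t u + q(y)^2 \partial_x u - \partial_{yy} u = 0$, which holds in $L^2(\O)$ for each such $t$, therefore yields
\[
\partial_t u_n - \partial_{yy} u_n + i n\, q(y)^2 u_n = 0 \quad \text{in } L^2(I), \quad t \in ]0,T[,
\]
so that $u_n$ solves \eqref{eq-Kolmogorov-n} with $u_n(0) = u_{\mathsf{o},n}$.

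It remains to prove uniqueness. Let $v$ have the regularity listed in the proposition and solve \eqref{eq-Kolmogorov-n} with $v(0) = 0$. For $t \in ]0,T]$ we may differentiate, using $v(t) \in H^2(I)\cap H^1_0(I)$ and an integration by parts in $y$,
\[
\frac{d}{dt}\nr{v(t)}_{L^2(I)}^2 = 2\Re \innp{\partial_t v(t)}{v(t)}_{L^2(I)} = 2\Re \innp{\partial_{yy}v(t) - i n q(y)^2 v(t)}{v(t)}_{L^2(I)} = -2\nr{\partial_y v(t)}_{L^2(I)}^2 \le 0,
\]
the zeroth-order term contributing nothing because $i n \int_I q(y)^2 |v(t)|^2\, dy$ is purely imaginary. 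Thus $t \mapsto \nr{v(t)}_{L^2(I)}^2$ is nonincreasing on $]0,T]$, and letting $t \to 0^+$ with $v(0)=0$ forces $v \equiv 0$; applied to the difference of two solutions of \eqref{eq-Kolmogorov-n} with the same data, this gives uniqueness.

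I do not expect a genuine obstacle in this argument: the only points needing a little care are the mapping properties of $\Pi_n$ between Sobolev spaces, in particular that the homogeneous Dirichlet condition is preserved, and the justification of the energy identity up to $t=0$, which is guaranteed by the regularity built into the notion of solution of \eqref{eq-Kolmogorov}.
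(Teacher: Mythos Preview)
Your argument is correct. The two ingredients you use are exactly those of the paper: the Fourier projection $\Pi_n$ intertwines the differential operators (so the equation passes to Fourier modes), and the accretivity of $K_n$---which is nothing but your energy identity $\frac{d}{dt}\nr{v}^2 = -2\nr{\partial_y v}^2 \le 0$---yields uniqueness.

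The organization, however, differs. The paper sets up $K$ and $K_n$ as maximal accretive operators (Proposition~\ref{prop-K-Kn}), invokes Lumer--Phillips to obtain the semigroups $e^{-tK}$ and $e^{-tK_n}$, and then proves the identity $u_n(t) = e^{-tK_n}u_{\mathsf{o},n}$ directly at the semigroup level (Proposition~\ref{prop-Fourier-propagateurs}), first for $\uo \in \Dom(K)$ and then by density; the regularity on $]0,T]$ is established separately in Proposition~\ref{prop-regularite}. You instead take Proposition~\ref{prop-well-posedness} as a black box, push its conclusions through the bounded map $\Pi_n$, and handle uniqueness by a bare-hands energy estimate. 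Your route is more elementary and avoids the density step, at the cost of relying on the full strength of Proposition~\ref{prop-well-posedness} (whose proof in the paper is in fact intertwined with the material you are using). The paper's route, by developing the theory of $K_n$ autonomously, also yields the contraction property \eqref{eq-semigroup-contraction} and the resolvent framework needed later, so it serves double duty.
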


An important property of the problem \eqref{eq-Kolmogorov-n} is the following exponential time decay.

\begin{proposition} \label{prop-decay-expKn}
Let
$$
\gamma < \frac{q'(0)}{\sqrt{2}}.
$$
There exists $C>0$ such that for $n \in \Z$, a solution $u_n$ of \eqref{eq-Kolmogorov-n} and $\th_1,\th_2 \in [0,T]$ with $\th_1 \leq \th_2$, one has
$$
\nr{u_n(\th_2)}_{L^2(I)}^2 \leq C \exp \big(- 2 \gamma\,\sqrt{\abs n}(\th_2-\th_1)\big) \nr{u_n(\th_1)}_{L^2(I)}^2.
$$
\end{proposition}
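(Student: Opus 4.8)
The plan is to reformulate the statement as a semigroup estimate and then exploit a rescaling turning $K_n=-\partial_{yy}+inq(y)^2$ into a bounded perturbation of an explicit model operator. Since $\overline{u_n}$ solves \eqref{eq-Kolmogorov-n} with $n$ replaced by $-n$, we may assume $n\ge 0$, and the case $n=0$ is immediate (the Dirichlet semigroup of $-\partial_{yy}$ is a contraction), so take $n\ge 1$. By Proposition~\ref{prop-well-posedness-n} we have $u_n(\th_2)=e^{-(\th_2-\th_1)K_n}u_n(\th_1)$, so the assertion is equivalent to
\[
\nr{e^{-tK_n}}_{\Lc(L^2(I))}\le C\,e^{-\gamma\sqrt n\,t},\qquad t\ge 0,\ n\ge 1,
\]
with $C$ independent of $n$. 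As $\Re\innp{K_n u}{u}=\nr{\partial_y u}^2\ge 0$, the operator $K_n$ is $m$-accretive and $e^{-tK_n}$ is a contraction semigroup; hence for $0\le t\le\e/\sqrt n$ we already have $\nr{e^{-tK_n}}\le 1\le e^{\gamma\e}e^{-\gamma\sqrt n\,t}$, so it suffices to prove the estimate, with a uniform constant, on the range $t\ge\e/\sqrt n$ for a convenient $\e>0$.

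On that range I would rescale. Set $h=n^{-1/2}$, $s=t/h=\sqrt n\,t\ge\e$, and apply the unitary dilation $y=\sqrt h\,z$, which conjugates $e^{-tK_n}$ to $e^{-s\Kc_h}$ on $L^2(h^{-1/2}I)$, where
\[
\Kc_h=-\partial_{zz}+\tfrac{i}{h}\,q(\sqrt h\,z)^2=\Hc_0+iR_h(z),\qquad \Hc_0:=-\partial_{zz}+i\,q'(0)^2z^2,
\]
and, since $q\in C^3$ with $q(0)=0$, the remainder $R_h(z)=h^{-1}\big(q(\sqrt h z)^2-q'(0)^2hz^2\big)=O(\sqrt h\,\abs{z}^3)$ uniformly on $h^{-1/2}I$. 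Here $\Hc_0$ is Davies' complex harmonic oscillator on $\R$: its spectrum is $\{q'(0)e^{i\pi/4}(2k+1):k\in\N\}$, so $\min\Re\,\Sp(\Hc_0)=q'(0)/\sqrt2$, and for every $\gamma'<q'(0)/\sqrt2$ one has $\nr{e^{-s\Hc_0}}\le C(\gamma')\,e^{-\gamma' s}$ for all $s\ge0$ --- this can be read off the explicit Hermite-type spectral decomposition of $\Hc_0$, or obtained from a resolvent bound on the line $\Re z=\gamma'$ together with a quantitative Gearhart--Pr\"{u}ss argument. I fix such a $\gamma'\in(\gamma,q'(0)/\sqrt2)$.

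The core of the proof is to transfer this decay to $\Kc_h$ uniformly for small $h$, that is, to show $\nr{e^{-s\Kc_h}}_{\Lc(L^2(h^{-1/2}I))}\le C\,e^{-\gamma' s}$ for $s\ge s_0$. Two perturbations must be controlled: the replacement of $\R$ by the large interval $h^{-1/2}I$ with Dirichlet conditions, and the cubic remainder $iR_h$. Both are harmless on the scale of the problem because the low modes of $\Kc_h$ concentrate near $z=0$ --- this is exactly the Agmon-type estimate for the first eigenfunction of $K_n$ established in Section~\ref{sec-spectral}, which in the $z$-variable exhibits Gaussian decay --- so these modes feel neither the boundary (an exponentially small error in $\sqrt n$) nor the region where $R_h$ is large; together with the localization of $\Re\,\Sp(K_n)$ this also gives $\Re\,\Sp(\Kc_h)\ge\gamma'$ uniformly. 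To make the transfer rigorous at the level of the semigroup I would establish that $\nr{(z-\Kc_h)^{-1}}_{\Lc(L^2(h^{-1/2}I))}$ is bounded uniformly in small $h$ for $z$ on the line $\Re z=\gamma'$, and more generally in the half-plane $\Re z\le\gamma'$ outside a fixed compact set --- using that after the dilation the imaginary part of the potential is bounded below for $\abs{z}\gtrsim1$, so that $\gamma'+i\omega$ stays away from the numerical range there, while near $z=0$ the operator is a small perturbation of $\Hc_0$ --- and then recover the semigroup bound from a contour integral bent along the boundary of the numerical range, or from a quantitative Gearhart--Pr\"{u}ss theorem. Undoing the scaling yields $\nr{e^{-tK_n}}\le Ce^{-\gamma'\sqrt n\,t}\le Ce^{-\gamma\sqrt n\,t}$ for $t\ge s_0/\sqrt n$; together with the contraction bound for $t\le s_0/\sqrt n$ (taking $\e=s_0$) this concludes.

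The main obstacle, as I expect, is this last transfer step. Because $K_n$ is strongly non-selfadjoint --- its numerical range, and in fact its pseudospectrum, fill a large portion of the right half-plane, up to imaginary parts of order $n$ --- the decay rate of $e^{-tK_n}$ is \emph{not} governed by the location of $\Sp(K_n)$ alone, and a bounded perturbation harmless for the eigenvalues could a priori destroy the semigroup bound. The rescaling $h=n^{-1/2}$ is precisely what makes the estimate robust: in the rescaled time $s$ the pseudospectral transient has length $O(1)$, i.e. $O(n^{-1/2})$ in the original time, short enough to be absorbed by the contraction estimate, while in the rescaled variable the operator is a genuine fixed-size cubic perturbation of the model $\Hc_0$, for which uniform resolvent and semigroup estimates are available. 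Keeping every constant uniform in $n$, with $\gamma'$ arbitrarily close to $q'(0)/\sqrt2$ --- which is what forces the sharp value of $\Tmax$ in Theorem~\ref{th-main} --- is the delicate point.
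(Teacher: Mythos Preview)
Your strategy—uniform resolvent bound in $\{\Re z\le \gamma'\sqrt n\}$ followed by a quantitative Gearhart--Pr\"uss argument—is exactly the paper's (Propositions~\ref{prop-spectre-Kn} and~\ref{prop-exp-Kn}), and the rescaling $h=n^{-1/2}$ is a natural way to phrase it. But the ``transfer step'' you flag as the obstacle is not just delicate: the justification you offer does not close.

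After the dilation, $R_h(z)=h^{-1}\big(q(\sqrt h z)^2-q'(0)^2hz^2\big)$ is $O(\sqrt h\,|z|^3)$ near $0$, but of size $O(1/h)$ near the ends of $h^{-1/2}I$; so $\Kc_h$ is \emph{not} a bounded perturbation of $\Hc_0$, and the perturbation cannot be treated globally. Your numerical-range argument does not fill this: for $\omega$ of order~$1$ the line $\Re\zeta=\gamma'$ already meets the numerical range of $\Kc_h$ (take $u$ localized near a point $z_0$ with $q'(0)^2 z_0^2\simeq\omega$ and with $\|\partial_z u\|^2\simeq\gamma'$), and for $\omega$ up to $O(1/h)$ the situation is worse. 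The Agmon estimates you invoke concern the first eigenfunction and do not by themselves control the resolvent along the whole line.

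What the paper actually does to get the uniform resolvent bound is to build an approximate inverse $Q_n(z)$ by gluing, via a partition of unity at scale $n^{-\rho}$ with $\rho\in(1/6,1/4)$, the resolvent of $H_n$ near $y=0$ together with the resolvents of \emph{complex Airy operators} $A_\alpha$, $A_\alpha^\pm$ away from $0$ and at the boundary (Propositions~\ref{prop-Airy-R} and~\ref{prop-Airy-R+}). The Airy models are the key missing ingredient in your sketch: near any $y_0\neq 0$ the potential $inq(y)^2$ linearizes to an Airy potential with slope $\alpha\sim n$, whose resolvent is $O(|\alpha|^{-2/3})=O(n^{-2/3})\ll n^{-1/2}$ in the relevant half-plane, and this is what makes the glued parametrix invert $(K_n-z)$ up to an error $o(1)$ uniformly in $z$. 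Once $\|(K_n-z)^{-1}\|\le c/\sqrt n$ for $\Re z\le\gamma\sqrt n$ is in hand, the semigroup bound follows exactly as you say.
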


The proofs of Propositions \ref{prop-well-posedness} and \ref{prop-well-posedness-n} will be given in Section \ref{sec-wellposedness-proof}. Proposition \ref{prop-decay-expKn} will be discussed in Section \ref{sec-Henry}.

\subsection{Positive result: upper bound for the critical time}

We begin the proof of Theorem \ref{th-main} with the first statement and prove observability for \eqref{eq-Kolmogorov} when $T > \Tmax$.\\

With the trace theorems, the regularity of the solution ensures that the right-hand side of \eqref{eq-obs-Gamma} makes sense, even if it could be equal to $+\infty$ if the initial condition is not regular enough. In fact, we are going to prove the following stronger result for observability (note that with $\tau_1$ chosen positive, the right-hand side of \eqref{eq_prop-obs-tau12} is finite).

\begin{proposition} \label{prop-obs-tau12}
Let $T > \Tmax$ and $\tau_1 \in ]0,T-\Tmax[$. Let $\tau_2 \in ]\tau_1,T]$. Then there exists $C > 0$ such that for any solution $u$ of \eqref{eq-Kolmogorov} we have 
\begin{equation} \label{eq_prop-obs-tau12}
\nr{u(T)}_{L^2(\O)}^2 \leq C \int_{\tau_1}^{\tau_2} \nr{\partial_\nu u(t)}_{L^2(\partial \O)}^2 \, dt.
\end{equation}
\end{proposition}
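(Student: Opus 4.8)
The plan is to reduce the full observability estimate \eqref{eq_prop-obs-tau12} to a frequency-by-frequency estimate via the Fourier decomposition, then combine a short-time observability estimate for each $u_n$ (coming from the Carleman estimate promised in Section~\ref{sec-Carleman}) with the exponential decay of Proposition~\ref{prop-decay-expKn}. By Parseval, $\nr{u(T)}_{L^2(\O)}^2 = 2\pi \sum_{n \in \Z} \nr{u_n(T)}_{L^2(I)}^2$ and, since $\partial_\nu$ acts only in $y$, also $\nr{\partial_\nu u(t)}_{L^2(\partial\O)}^2 = 2\pi \sum_{n\in\Z} \nr{\partial_\nu u_n(t)}_{L^2(\{-\ell_-,\ell_+\})}^2$. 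So it suffices to prove, for a constant $C$ independent of $n$, the estimate $\nr{u_n(T)}_{L^2(I)}^2 \leq C \int_{\tau_1}^{\tau_2} \nr{\partial_\nu u_n(t)}^2\,dt$ for every solution $u_n$ of \eqref{eq-Kolmogorov-n}, and then sum.

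\textbf{The frequency-wise estimate.} Fix $\tau_0 \in ]\tau_1,\tau_2[$, say $\tau_0 = (\tau_1+\tau_2)/2$. The Carleman estimate of Section~\ref{sec-Carleman} is expected to yield, for arbitrarily short observation windows, a bound of the form
\begin{equation*}
\nr{u_n(\tau_0)}_{L^2(I)}^2 \leq C \, e^{K\sqrt{|n|}} \int_{\tau_1}^{\tau_0} \nr{\partial_\nu u_n(t)}^2 \, dt,
\end{equation*}
with constants $C, K$ independent of $n$ (the $e^{K\sqrt{|n|}}$ loss is the hallmark of such degenerate-parabolic Carleman estimates, the scale $\sqrt{|n|}$ matching the semiclassical parameter of $K_n = -\partial_{yy} + inq(y)^2$). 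To pass from time $\tau_0$ to time $T$ we use Proposition~\ref{prop-decay-expKn} with $\theta_1 = \tau_0$, $\theta_2 = T$: choosing any $\gamma < q'(0)/\sqrt 2$,
\begin{equation*}
\nr{u_n(T)}_{L^2(I)}^2 \leq C \, e^{-2\gamma \sqrt{|n|}(T - \tau_0)} \nr{u_n(\tau_0)}_{L^2(I)}^2.
\end{equation*}
Multiplying the two inequalities gives $\nr{u_n(T)}^2 \leq C \, e^{(K - 2\gamma(T-\tau_0))\sqrt{|n|}} \int_{\tau_1}^{\tau_0}\nr{\partial_\nu u_n}^2\,dt$, and since $\tau_0$ can be taken as close to $T$ as the relation $\tau_0 < T$ allows — more precisely, the Carleman exponent $K$ depends only on the window length $\tau_0 - \tau_1$, so by first fixing a small window we keep $K$ fixed while pushing $T - \tau_0$ large — the exponent becomes negative for all sufficiently large $|n|$, making $e^{(K-2\gamma(T-\tau_0))\sqrt{|n|}}$ uniformly bounded. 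The remaining finitely many frequencies are handled directly by the same Carleman estimate applied on $[\tau_1,\tau_2]$ (each gives a finite constant). One then obtains a single $C$ valid for all $n$, and summing over $n$ yields \eqref{eq_prop-obs-tau12}. The finiteness of the right-hand side for $\tau_1 > 0$ follows from the $H^2$-regularity of $u(t)$ for $t > 0$ and the trace theorem.

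\textbf{Where the difficulty lies.} The genuinely hard part is the uniform short-time Carleman estimate at frequency $n$ with the \emph{correct} exponential dependence $e^{K\sqrt{|n|}}$ in terms of a constant $K$ that can be made arbitrarily small (relative to the decay rate budget $2\gamma(T-\tau_0)$) by shrinking the observation window — this is precisely the content deferred to Section~\ref{sec-Carleman} and is the technical core of the paper. Here I would simply invoke it. The one subtlety to be careful about in the present proposition is the logical order of quantifiers: $\tau_1$ and $\tau_2$ are \emph{given} with $\tau_1 \in ]0, T - \Tmax[$, so the window $\tau_0 - \tau_1$ is not ours to shrink freely; instead we must check that the constraint $T > \Tmax$ together with $\gamma$ arbitrarily close to $q'(0)/\sqrt 2$ already forces $K - 2\gamma(T-\tau_0) < 0$ for the admissible $\tau_0$, which is exactly where the threshold $\Tmax = \frac{1}{q'(0)}\max(\int_0^{\ell_+} q, \int_{-\ell_-}^0 |q|)$ enters. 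Making this bookkeeping consistent between the Carleman constant and the Agmon/decay constant is the point requiring the most care.
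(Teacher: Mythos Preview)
Your approach is essentially the paper's: reduce by Parseval to a uniform frequency-wise estimate (this is Proposition~\ref{prop-observability-n}), then combine the short-time Carleman bound $\nr{u_n(\tau_0)}^2 \leq C e^{2\kappa\sqrt{|n|}} \int_{\tau_1}^{\tau_0} \nr{\partial_\nu u_n}^2\,dt$ (Propositions~\ref{prop-obs-fix-n} and~\ref{prop-obs-large-n}) with the decay of Proposition~\ref{prop-decay-expKn}, and check that the resulting exponent is negative.

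One correction is needed, though. The Carleman exponent $\kappa$ does \emph{not} shrink with the observation window: by Proposition~\ref{prop-obs-large-n}, $\kappa$ can be chosen to be any number strictly greater than $\frac{q'(0)}{\sqrt 2}\Tmax$, \emph{independently} of $\tau_0-\tau_1$, and it cannot be taken below this threshold (that is precisely why there is a critical time). What makes the combined exponent $2\kappa - 2\gamma(T-\tau_0)$ negative is that one can choose $\tau_0$ close to $\tau_1$ so that $T-\tau_0 > \Tmax$ (possible because $\tau_1 < T-\Tmax$), and then pick $\kappa$ slightly above $\frac{q'(0)}{\sqrt 2}\Tmax$ and $\gamma$ slightly below $\frac{q'(0)}{\sqrt 2}$. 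Your final paragraph states this correctly; the earlier claim that $K$ ``depends only on the window length'' and ``can be made arbitrarily small by shrinking the observation window'' is wrong and should be removed. The paper carries out exactly this balancing with $\theta_1 = \min(\tau_1 + \delta(T-\tau_1),\tau_2)$ playing the role of your $\tau_0$.
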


Obviously, Proposition \ref{prop-obs-tau12} implies \eqref{eq-obs-Gamma}. The fact that we observe during an arbitrarily small time $\tau_2 - \tau_1$ might seem contradictory with the minimal time condition. It is not the case, since only the state at time $T>T_{max}$ is controled by the observation on the
time interval $[\tau_1,\tau_2]$. As we will see below, the dissipation effect of the Kolmogorov equation plays a key role in obtaining \eqref{eq_prop-obs-tau12}. Roughly speaking, we have to wait long enough for the dissipation to fully play is role, and
inequality \eqref{eq_prop-obs-tau12} to be true.\\

By Proposition \ref{prop-well-posedness-n} and the Parseval identity, Proposition \ref{prop-obs-tau12} is equivalent to an observability estimate for \eqref{eq-Kolmogorov-n} uniform with respect to the Fourier parameter $n$. In other words, it is equivalent to prove the following result.

\begin{proposition} \label{prop-observability-n}
Let $T$, $\tau_1$ and $\tau_2$ be as in Proposition \ref{prop-obs-tau12}. There exists $C>0$ such that for any $n \in \mathbb{Z}$ and any solution $u_n$ of \eqref{eq-Kolmogorov-n} one has
\begin{equation} \label{eq-obs-n}
\Vert u_n(T) \Vert_{L^2(I)}^2 \leq C  \int_{\tau_1}^{\tau_2} \big(\vert \partial_y u_n(t,-\ell_-)\vert^2 + \vert \partial_y u_n(t,\ell_+) \vert^2 \big) \, dt.
\end{equation}
\end{proposition}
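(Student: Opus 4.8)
The plan is to prove the uniform-in-$n$ observability estimate \eqref{eq-obs-n} by splitting the argument into two regimes according to the Fourier parameter $|n|$, and gluing them with the exponential decay of Proposition~\ref{prop-decay-expKn}. For a fixed (large) threshold, the low-frequency range $|n| \le N_0$ consists of finitely many one-dimensional parabolic equations of the form \eqref{eq-Kolmogorov-n} with a bounded potential $nq(y)^2$; for each such $n$ the equation is a non-degenerate parabolic equation on the interval $I$, so the classical Lebeau--Robbiano/Fursikov--Imanuvilov theory (or a direct one-dimensional Carleman estimate) gives \eqref{eq-obs-n} with a constant depending on $n$, and taking the maximum over the finitely many values handles this range. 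The real content is the high-frequency range $|n| > N_0$, where the constant must be \emph{uniform}, and this is where the Carleman estimate announced in Section~\ref{sec-Carleman} enters.

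For the high-frequency regime I would proceed as follows. First, rescale: writing $u_n(t,y)$ and introducing the semiclassical parameter $h = |n|^{-1/2}$ (or working directly with the operator $K_n = -\partial_{yy} + i n q(y)^2$), one expects a Carleman estimate on a short time interval, say $[\tau_1,\tau_1 + \sigma]$ with $\sigma$ as small as one likes, of the shape
\begin{equation*}
\int_{\tau_1}^{\tau_1+\sigma}\!\!\int_I e^{2s\varphi}\big(\cdots\big)\,dy\,dt \;\le\; C\int_{\tau_1}^{\tau_1+\sigma}\big(|\partial_y u_n(t,-\ell_-)|^2 + |\partial_y u_n(t,\ell_+)|^2\big)\,dt,
\end{equation*}
for a suitable weight $\varphi = \varphi(t,y)$ and large parameter $s$, with all constants independent of $n$. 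From such an estimate one extracts, after choosing $s$ appropriately and controlling the boundary-to-interior passage, a bound of the form
\begin{equation*}
\nr{u_n(\tau_1+\sigma)}_{L^2(I)}^2 \;\le\; C\, e^{C\sqrt{|n|}}\int_{\tau_1}^{\tau_2}\big(|\partial_y u_n(t,-\ell_-)|^2+|\partial_y u_n(t,\ell_+)|^2\big)\,dt,
\end{equation*}
i.e.\ observability in arbitrarily small time but with a constant that blows up like $e^{C\sqrt{|n|}}$ — this is the ``weak'' observability inequality referred to in the outline. Then one invokes Proposition~\ref{prop-decay-expKn} on the interval $[\tau_1+\sigma, T]$: since $T > \Tmax$ and $\gamma$ can be taken with $\gamma\sqrt{|n|}(T-\tau_1-\sigma)$ beating $C\sqrt{|n|}$ once $\sigma$ is small and $T-\tau_1$ exceeds $\Tmax$ with room to spare, one gets $\nr{u_n(T)}^2 \le C e^{-c\sqrt{|n|}}\nr{u_n(\tau_1+\sigma)}^2$, and multiplying the two bounds the exponential factors $e^{C\sqrt{|n|}}e^{-c\sqrt{|n|}}$ combine to something bounded (indeed decaying) for $|n|$ large. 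This yields \eqref{eq-obs-n} uniformly over $|n| > N_0$.

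The delicate point — the main obstacle — is the sharp matching of constants: the exponential rate $\gamma$ in the semigroup decay (bounded by $q'(0)/\sqrt2$) must genuinely dominate the exponential growth rate coming out of the Carleman estimate, and this balance is precisely what produces the threshold $\Tmax = \frac{1}{q'(0)}\max\big(\int_0^{\ell_+} q, \int_{-\ell_-}^0 |q|\big)$. Concretely, one needs the Carleman weight to be tuned so that the exponential loss in the small-time observability is no worse than $\exp\big((\Tmax + \epsilon)\,\gamma_0\sqrt{|n|}\big)$ with $\gamma_0 = q'(0)/\sqrt2$, for every $\epsilon>0$; this requires choosing the spatial weight in the Carleman estimate in accordance with the Agmon distance / the antiderivative of $|q|$, so that the Carleman machinery and the spectral Agmon estimates of Section~\ref{sec-spectral} see the same geometry. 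Getting the Carleman estimate with explicit dependence on $n$ and with a weight compatible with both endpoints $-\ell_-$ and $\ell_+$ (hence the $\max$), together with the arbitrariness of $\tau_2-\tau_1$ and the choice of $\tau_1 \in \,]0, T-\Tmax[$, is the technical heart of the proof; the finitely-many low frequencies and the final summation via Parseval are then routine.
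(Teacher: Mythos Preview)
Your proposal is correct and follows essentially the same approach as the paper: split into finitely many low frequencies (handled by the fixed-$n$ observability of Proposition~\ref{prop-obs-fix-n}) and high frequencies, where the Carleman estimate of Proposition~\ref{prop-obs-large-n} gives observability in small time with cost $e^{2\kappa\sqrt{n}}$ for any $\kappa > \frac{q'(0)}{\sqrt 2}\Tmax$, and then the semigroup decay of Proposition~\ref{prop-decay-expKn} on $[\theta_1,T]$ absorbs this cost precisely when $T-\tau_1 > \Tmax$. Your identification of the sharp constant matching --- that the Carleman weight must be built from the antiderivative of $|q|$ so that the loss is $\exp\big((\Tmax+\epsilon)\tfrac{q'(0)}{\sqrt 2}\sqrt{|n|}\big)$ --- is exactly the content of the weight construction in Proposition~\ref{prop-def-phi}.
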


Note that it is sufficient to prove \eqref{eq-obs-n} for $n \in \N$. The case $n \in \Z$ then follows by complex conjugation of \eqref{eq-Kolmogorov-n}.\\

The difficulty in Proposition \ref{prop-observability-n} is the uniformity with respect to the parameter $n$. For $n$ fixed, it is already known that the one-dimensional heat equation with a complex-valued potential is observable through the boundary in any positive time:

\begin{proposition} \label{prop-obs-fix-n}
Let $T>0$ and $n \in \mathbb{N}$. Let $\tau_1,\tau_2 \in ]0,T]$ with $\tau_1 < \tau_2$. There exists $C_n > 0$ such that for any solution $u_n$ of \eqref{eq-Kolmogorov-n} we have 
\begin{equation} \label{equ_obsineq_fixedn}
\nr{u_n(T)}_{L^2(I)}^2  \leq C_n \int_{\tau_1}^{\tau_2} \big( \abs {\partial_y u_n(t,-\ell_-)}^2 + \abs{\partial_y u_n(t,\ell_+)}^2 \big) \, dt.
\end{equation}
\end{proposition}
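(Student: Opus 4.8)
The plan is to exploit that, for a \emph{fixed} Fourier parameter $n$, the zeroth-order term $inq(y)^2$ in \eqref{eq-Kolmogorov-n} is a bounded (complex-valued) perturbation of $-\partial_{yy}$, so that the statement reduces to the classical observability of a one-dimensional parabolic equation with a potential; the complex-valuedness is harmless, since the relevant Carleman argument only uses the pointwise bound on its modulus. Concretely, I would rewrite \eqref{eq-Kolmogorov-n} as $\partial_t u_n - \partial_{yy} u_n = f_n$ on $]0,T[\,\times I$ with Dirichlet conditions, where $f_n := -inq(y)^2 u_n$ satisfies $\abs{f_n(t,y)} \le C_n \abs{u_n(t,y)}$ with $C_n = \abs n\,\nr{q^2}_{L^\infty(I)}$, and then invoke known results (as in \cite{fattorini_russel, Fursikov_Imanuvilov}); for completeness, here is the Carleman route.

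First I would apply, on the cylinder $]\tau_1,\tau_2[\,\times I$, the standard global Carleman estimate for $\partial_t-\partial_{yy}$ with Dirichlet boundary conditions. Since $I$ is a segment, a weight built from $\psi(y)=y$ is admissible ($\psi$ has no critical point on $\bar I$), so no sign condition on the boundary is needed and \emph{both} boundary terms $\abs{\partial_y u_n(t,-\ell_-)}^2$ and $\abs{\partial_y u_n(t,\ell_+)}^2$ may be placed on the right-hand side. Writing $\rho_s(t,y)=e^{2s\varphi(t,y)}$ for the corresponding Carleman weight --- which is positive, degenerates as $t\to\tau_1^+$ and $t\to\tau_2^-$, stays bounded on the lateral boundary $]\tau_1,\tau_2[\,\times\{-\ell_-,\ell_+\}$, and is bounded below by a positive constant on any $[\tau_1',\tau_2']\subset\,]\tau_1,\tau_2[$ --- this estimate reads, for $s$ large enough,
\[
s\int_{\tau_1}^{\tau_2}\!\!\int_I \rho_s\,\abs{u_n}^2\,dy\,dt \;\le\; C\int_{\tau_1}^{\tau_2}\!\!\int_I \rho_s\,\abs{f_n}^2\,dy\,dt \;+\; C\int_{\tau_1}^{\tau_2}\big(\abs{\partial_y u_n(t,-\ell_-)}^2 + \abs{\partial_y u_n(t,\ell_+)}^2\big)\,dt,
\]
the weight multiplying the boundary term being uniformly bounded. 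Using $\abs{f_n}^2\le C_n^2\abs{u_n}^2$ and then choosing $s=s_n$ large enough depending on $n$ (this is precisely what makes the final constant depend on $n$), the $f_n$-term is absorbed into the left-hand side; restricting the remaining left-hand integral to $[\tau_1',\tau_2']$, where $\rho_{s_n}\ge c_n>0$, yields
\[
\int_{\tau_1'}^{\tau_2'}\nr{u_n(t)}_{L^2(I)}^2\,dt \;\le\; C_n \int_{\tau_1}^{\tau_2}\big(\abs{\partial_y u_n(t,-\ell_-)}^2 + \abs{\partial_y u_n(t,\ell_+)}^2\big)\,dt.
\]

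It remains to pass from this time-averaged bound to the single final time $T$, and for this I would use the dissipativity of $K_n$: multiplying \eqref{eq-Kolmogorov-n} by $\overline{u_n}$, integrating over $I$ and taking real parts gives $\frac{d}{dt}\nr{u_n(t)}_{L^2(I)}^2 = -2\nr{\partial_y u_n(t)}_{L^2(I)}^2 \le 0$, so $t\mapsto \nr{u_n(t)}_{L^2(I)}$ is nonincreasing on $[0,T]$. Since $\tau_2'\le\tau_2\le T$, this gives $\nr{u_n(T)}_{L^2(I)}^2 \le \nr{u_n(t)}_{L^2(I)}^2$ for every $t\in[\tau_1',\tau_2']$, hence $\nr{u_n(T)}_{L^2(I)}^2 \le (\tau_2'-\tau_1')^{-1}\int_{\tau_1'}^{\tau_2'}\nr{u_n(t)}_{L^2(I)}^2\,dt$; combined with the previous inequality this is \eqref{equ_obsineq_fixedn}.

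For fixed $n$ this is essentially a textbook statement, so I do not expect a genuine obstacle: the only slightly delicate point is the absorption of the zeroth-order term, which forces the Carleman parameter --- hence the observability constant $C_n$ --- to blow up as $\abs n\to\infty$. Removing that blow-up, i.e.\ obtaining a constant \emph{uniform in $n$}, is exactly the harder Proposition~\ref{prop-observability-n}, for which this crude perturbative argument is useless and the spectral analysis of $K_n$ together with the refined Carleman estimate of Section~\ref{sec-Carleman} are required.
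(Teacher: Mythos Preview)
Your argument is correct and follows essentially the same route as the paper: a Carleman estimate on $]\tau_1,\tau_2[\times I$ with the large parameter taken $n$-dependent to absorb the zeroth-order term, followed by the dissipativity of $K_n$ to pass from the time-averaged bound to the final time $T$. The only difference is cosmetic: the paper applies its in-house Proposition~\ref{prop-Carleman} with a strictly concave weight $\psi$ (keeping $inq^2$ inside the Carleman computation), whereas you invoke the standard heat Carleman with $\psi(y)=y$ and treat $inq^2 u_n$ as a source.
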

 
A proof of Proposition \ref{prop-obs-fix-n} will be given in Section \ref{sec-obs-fixed-n}. With this result, it is now enough to prove Proposition \ref{prop-observability-n} for $n$ large. To do so, we first obtain a precise estimate
of the constant $C_n$ in the asymptotic $n$ large.

\begin{proposition} \label{prop-obs-large-n}
Let $\tau_1,\tau_2 \in ]0,T]$ with $\tau_1 < \tau_2$ and 
$$
\kappa >  \max \left( \frac 1 {\sqrt 2} \int_0^{\ell_+} q(s) \, ds ,  \frac 1 {\sqrt 2} \int_{-\ell_-}^{0} \vert q(s)\vert  \, ds \right) = \frac {q'(0)}{\sqrt 2} \Tmax.
$$
There exist $n_0 \in \N$ and $C > 0$ such that for $n \geq n_0$ and a solution $u_n$ of \eqref{eq-Kolmogorov-n} one has
$$
 \nr{u_n(\tau_2)}_{L^2(I)}^2 \leq C \exp(2 \kappa \sqrt{n} )  {\int_{\tau_1}^{\tau_2} \big( \vert \partial_y u_n(t,-\ell_-)\vert^2 + \vert \partial_y u_n(t,\ell_+) \vert^2 \big) \,  dt}.
$$
\end{proposition}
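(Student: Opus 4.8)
The plan is to derive a quantitative Carleman estimate for the one–dimensional parabolic operator $\partial_t - \partial_{yy} + inq(y)^2$ on $]0,T[\times I$ with a weight function whose $y$–dependence is tuned to the potential $inq(y)^2$, and then convert it into the observability inequality \eqref{eq-obs-n} with the announced dependence $C\exp(2\kappa\sqrt n)$. The heuristic is that, at frequency $n$, the operator $-\partial_{yy}+inq(y)^2$ has a semiclassical character with effective small parameter $h\sim n^{-1/2}$; the Agmon distance associated with the "potential" $|q(y)|^2$ (in the sense of the eigenvalue problem studied in Section~\ref{sec-spectral}) between the degeneracy point $y=0$ and the boundary points $y=-\ell_-$ and $y=\ell_+$ is, after a WKB computation, $\sqrt{|n|}\,\int_0^{\ell_\pm}\sqrt{q(s)^2}\,ds=\sqrt{|n|}\,\int_0^{\ell_\pm}|q(s)|\,ds$ up to the factor $1/\sqrt 2$ coming from the $\sqrt{i}$. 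This is exactly the constant $\kappa$ appearing in the statement, so the exponential weight in the Carleman estimate must be built from a function $\varphi(y)$ that behaves like (a regularized version of) $\int_0^{y}|q(s)|\,ds$ near $y=0$; the worst of the two one–sided integrals dictates $\kappa$.

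The key steps, in order. First, fix a weight $\varphi\in C^2(\bar I)$ with $\varphi>0$, $\varphi'$ not vanishing except at $y=0$, $|\varphi'(y)|\le q(y)$ for $y>0$ small and $|\varphi'(y)|\le |q(y)|$ for $y<0$ small — more precisely $\varphi'(y)^2 \le q(y)^2$ near $0$ — and with $\varphi$ attaining its minimum at $y=0$; also impose $\varphi'(-\ell_-)<0<\varphi'(\ell_+)$ so that the boundary terms in the integration by parts carry the right sign. Introduce a time weight blowing up at $t=\tau_1$ and $t=\tau_2$ as usual (e.g. $\theta(t)=\big((t-\tau_1)(\tau_2-t)\big)^{-1}$) and set the Carleman weight $e^{s\theta(t)\varphi(y)}$ with a large parameter $s$ that we will ultimately take proportional to $\sqrt n$. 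Second, carry out the standard conjugation $P_\varphi = e^{s\theta\varphi}(\partial_t-\partial_{yy}+inq^2)e^{-s\theta\varphi}$, split into self-adjoint and skew-adjoint parts, expand $\|P_\varphi w\|^2 = \|P_+w\|^2+\|P_-w\|^2 + 2\langle P_+w,P_-w\rangle$, and integrate by parts; the cross term produces, after integration, a positive bulk term of size $s\theta$ times $\|\partial_y w\|^2$ plus $s^3\theta^3$ times $\|w\|^2$ weighted by $(\varphi')^2$, together with boundary contributions at $y=-\ell_-,\ell_+$ proportional to $s\theta|\varphi'|\,|\partial_y w|^2$. The presence of the purely imaginary potential $inq^2$ contributes a term of the form $n\theta\,\varphi'(y)\,q(y)q'(y)$ or $n\,$(something) against $|w|^2$; the point of the constraint $(\varphi')^2\le q^2$ near $0$ is that this imaginary contribution can be absorbed by, or combined with, the $s^3\theta^3(\varphi')^2$ positive term once $s\gtrsim \sqrt n$, which is where the threshold $\kappa > (q'(0)/\sqrt2)\Tmax$ enters: choosing $s = \lambda\sqrt n$ with $\lambda$ chosen so that $\lambda \cdot \min_t\theta\cdot(\varphi_{\max}-\varphi_{\min})$ is slightly below $2\kappa$ gives the clean exponent. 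Third, apply the resulting Carleman inequality to $w=u_n$, drop all the nonnegative bulk terms on the left, keep $\|e^{s\theta\varphi}u_n\|_{L^2(]\tau_1,\tau_2[\times I)}^2$ on the left and the boundary term $\int\! s\theta\,|\varphi'(\pm\ell_\pm)|\,e^{2s\theta\varphi(\pm\ell_\pm)}|\partial_y u_n|^2$ on the right. Fourth, on the left restore a full interval: choose a subinterval $]\tau_1',\tau_2'[\Subset]\tau_1,\tau_2[$ on which $\theta$ is bounded above and below, bound $e^{2s\theta\varphi}\ge e^{2s\theta_{\min}\varphi_{\min}}$ there, and then use the backward-uniqueness/dissipativity of $e^{-tK_n}$ — concretely, $t\mapsto \|u_n(t)\|_{L^2(I)}$ is nonincreasing, which is immediate since $\Re\langle K_n u,u\rangle = \|\partial_y u\|^2\ge 0$ — to replace $\|u_n\|_{L^2(]\tau_1',\tau_2'[\times I)}^2$ by $c(\tau_2'-\tau_1')\|u_n(\tau_2)\|_{L^2(I)}^2$. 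On the right, bound $e^{2s\theta\varphi(\pm\ell_\pm)}$ from above, noting $\theta(t)\le \theta_0$ is false near the endpoints, so here one must instead keep $\theta$ variable and bound $\sup_{t}\big(s\theta(t)e^{2s\theta(t)\varphi(\pm\ell_\pm)}\big)$, which is a constant depending on $s$ polynomially-times-exponentially; absorbing this into $C\exp(2\kappa\sqrt n)$ with $\kappa$ strictly larger than $(q'(0)/\sqrt2)\Tmax = \max(\tfrac1{\sqrt2}\int_0^{\ell_+}q,\tfrac1{\sqrt2}\int_{-\ell_-}^0|q|)$ is exactly why the inequality is stated with a strict inequality on $\kappa$ rather than an equality. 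Collecting, one gets $\|u_n(\tau_2)\|_{L^2(I)}^2 \le C e^{2\kappa\sqrt n}\int_{\tau_1}^{\tau_2}\big(|\partial_y u_n(t,-\ell_-)|^2+|\partial_y u_n(t,\ell_+)|^2\big)dt$ for all $n\ge n_0$, as claimed.

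The main obstacle I expect is the bookkeeping of the imaginary potential in the Carleman computation: unlike the classical heat-equation Carleman estimate, here the conjugated operator is not merely a perturbation of the real case — the term $inq^2$ has the same order of magnitude, $n\sim s^2\sim (s\theta)^2/\theta^2$, as the leading Carleman gain, so one cannot treat it as a lower-order remainder. The delicate point is to show that, with the constraint $\varphi'(y)^2\le q(y)^2$ near $y=0$ (and a suitable global choice of $\varphi$ away from $0$, where $q$ does not vanish and there is room), the cross terms involving $n$ are dominated by the $s^3\theta^3(\varphi')^2|w|^2$ term precisely when $s/\sqrt n$ exceeds a constant related to $1/q'(0)$; getting the sharp constant $\kappa$ requires choosing $\varphi$ so that $\varphi'(0)=0$ but $\varphi'' (0)$ is as large as allowed, i.e. $\varphi(y)\approx \tfrac{q'(0)}{2}y^2$ near $0$, mirroring the WKB/Agmon computation $\int_0^{\ell_+}|q(s)|ds$ appearing through $\int_0^{\ell_+}\sqrt{(\varphi')^2}\,$. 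A secondary technical nuisance is the low regularity $q\in C^3$, which limits how smooth $\varphi$ can be taken and forces care in the integration by parts, and the fact that $\tau_1,\tau_2$ are only assumed in $]0,T]$ (not symmetric), so the time weight $\theta$ and the endpoint bounds must be handled asymmetrically; neither of these is serious. The combination with Proposition~\ref{prop-decay-expKn} to finish Proposition~\ref{prop-observability-n} — balancing $e^{2\kappa\sqrt n}$ against $e^{-2\gamma\sqrt n(T-\tau_2)}$ for $T>\Tmax$ and $\gamma$ close to $q'(0)/\sqrt2$ — is straightforward and is not part of this proof.
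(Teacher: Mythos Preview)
Your overall strategy --- a Carleman estimate with weight $\theta(t)\psi(y)$, large parameter $s\sim\sqrt n$, and a spatial profile built from $\int|q|$ --- is the paper's. Two points, however, need correction.

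First, a sign problem. With $\varphi>0$ and the conjugation $P_\varphi=e^{s\theta\varphi}(\partial_t-\partial_{yy}+inq^2)e^{-s\theta\varphi}$, the substituted function is $w=e^{s\theta\varphi}u$; since $\theta\to+\infty$ at $t=\tau_1,\tau_2$ and $\varphi>0$, $w$ does \emph{not} vanish at the time endpoints, so the integrations by parts that kill the initial/final terms are unjustified. Worse, your boundary observation term carries the factor $e^{2s\theta(t)\varphi(\pm\ell_\pm)}$, and your claimed bound ``$\sup_t\big(s\theta(t)e^{2s\theta(t)\varphi(\pm\ell_\pm)}\big)$ is a constant'' is false: that supremum is $+\infty$. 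The paper uses the opposite sign, $w=u\,e^{-\sqrt n\,\theta\psi}$ with $\psi>0$; then $w\to0$ at $t=\tau_{1,2}$ and $e^{-\sqrt n\,\theta\psi(\pm\ell_\pm)}\le1$, so the boundary term is simply bounded by $\int_{\tau_1}^{\tau_2}|\partial_y u(t,\pm\ell_\pm)|^2\,dt$.

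Second, and more substantively, you propose a single global weight with $\varphi'(0)=0$. The paper does \emph{not}: it splits $I$ into two overlapping pieces $[-\delta,\ell_+]$ and $[-\ell_-,\delta]$, uses two weights $\psi_\pm$ with $\psi_+'<0$ and $\psi_-'>0$ \emph{everywhere} on their respective intervals, applies the Carleman estimate separately to $\chi_\pm u$, and glues (the commutator terms $[\partial_{yy},\chi_\pm]u$ are supported where both estimates are available and are absorbed for $n$ large). The reason is precisely the obstacle you flag but do not resolve: the leading positive bulk term is $-2(\psi')^2\psi''-\tfrac{q^2q'}{\sqrt2}$, and with $\psi'(0)=0$ and $q(0)=0$ this vanishes at $y=0$, so no uniform lower bound $\ge\varepsilon>0$ is possible. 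Your remedy --- that the cross terms ``are dominated by the $s^3\theta^3(\varphi')^2|w|^2$ term'' --- fails at $y=0$, where that very term is zero; and the constraint you impose, $(\varphi')^2\le q^2$, points the wrong way (one needs $(\psi')^2$ bounded \emph{below} relative to $q^2$ to beat the $n^{3/2}q^2q'$ loss). The paper's concrete choice is $\psi_+'(y)=-\beta\big(q(y)+3\varepsilon_0\big)$ with $\beta>\tfrac1{\sqrt2}$ and a small shift $\varepsilon_0>0$; the $\varepsilon_0$ is exactly what keeps $\psi_+'$ bounded away from zero near $y=0$, at the price of an arbitrarily small increase in $\sup\psi$, which is why the statement needs the strict inequality on $\kappa$. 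After gluing one gets a continuous $\varphi$ with $0\le\varphi\le\kappa$ on the middle time-third, and the passage to $\|u_n(\tau_2)\|^2$ via the contraction $\|u_n(\tau_2)\|\le\|u_n(t)\|$ is as you say.
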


The proof of this proposition is based on carefully constructed Carleman estimates, in the spirit of \cite{Beauchard_Darde_Ervedoza}. We refer to Section \ref{sec-observability-fin} for the details.\\

The observability estimate of Proposition \ref{prop-obs-large-n} is valid for any non-trivial interval of time, but it is not uniform with respect to $n$. As said above, the dissipation effect has to be taken into account here. More precisely, the second ingredient for the proof of Proposition \ref{prop-observability-n} is the estimate given by Proposition \ref{prop-decay-expKn}, which precisely counterbalances the loss observed in Proposition \ref{prop-obs-large-n} if we wait long enough.

\begin{proof}[Proof of Proposition \ref{prop-observability-n}, assuming Propositions \ref{prop-decay-expKn}, \ref{prop-obs-fix-n} and \ref{prop-obs-large-n}]
We choose $\delta \in ]0,1[$ so small that 
$$
(1 +\delta) \Tmax < (1-\delta)^2 (T - \tau_1).
$$
Then we set
$$
\kappa =  (1+\delta)\, \frac{q'(0)}{\sqrt{2}}\, \Tmax,
\quad \gamma = (1-\delta) \frac{q'(0)}{\sqrt{2}}.
$$
Proposition \ref{prop-decay-expKn} applied with $\th_2 = T$ and 
\[
\th_1 = \min\big( \tau_1 + \delta (T-\tau_1) , \tau_2 \big)
\]
gives a constant $C_1 > 0$ such that for all $n\in \N$ and $u_n$ solution of \eqref{eq-Kolmogorov-n} we have 
$$
\Vert u_n (T) \Vert_{L^2(I)}^2 \leq C_1 \, \exp \big( -2\gamma \sqrt n (1-\delta) (T-\tau_1) \big) \Vert u_n(\th_1) \Vert_{L^2(I)}^2.
$$
By Propositions \ref{prop-obs-fix-n} and \ref{prop-obs-large-n}, there exists $C_2 > 0$ such that for all $n\in \N$ and $u_n$ solution of \eqref{eq-Kolmogorov-n} we have 
 $$
\Vert u(\th_1) \Vert_{L^2(I)}^2 \leq C_2 \exp(2\kappa \sqrt{n}) {\int_{\tau_1}^{\theta_1}
\big( \vert \partial_y u(t,-\ell_-)\vert^2 + \vert \partial_y u(t,\ell_+) \vert^2 \big) ds }.
$$
Since 
$$
\kappa - \gamma (1-\delta)(T-\tau_1) = \frac{q'(0)}{\sqrt{2}} \left( (1+\delta) \Tmax - (1-\delta)^2 (T-\tau_1) \right)< 0,
$$
these two inequalities give
$$
\Vert u_n(T) \Vert_{L^2(I)}^2 \leq C_1C_2 {\int_{\tau_1}^{\tau_2} \big( \vert \partial_y u(t,-\ell_-)\vert^2 + \vert \partial_y u(t,\ell_+) \vert^2 \big) \, dt },
$$
and the proposition is proved.
\end{proof}

We recall that Proposition \ref{prop-observability-n} implies Proposition \ref{prop-obs-tau12} and hence the first statement of Theorem \ref{th-main}. Thus, it is enough to prove Propositions \ref{prop-decay-expKn}, \ref{prop-obs-fix-n} and \ref{prop-obs-large-n} to get the observability of \eqref{eq-Kolmogorov} through $\G$ for $T > \Tmax$. These proofs are postponed to Sections \ref{sec-spectral} and \ref{sec-Carleman}.

\subsection{Negative result: lower bound for the critical time}

In this paragraph we discuss the second statement of Theorem \ref{th-main} about the non-observability of \eqref{eq-Kolmogorov} if $T < \Tmin$. The proof relies on the construction of a particular family of solutions of \eqref{eq-Kolmogorov} for which the estimate \eqref{eq-obs-Gamma} cannot hold if $T < \Tmin$. In Section \ref{sec-spectral}, we will prove the following result.

\begin {proposition}\label{prop-ln-psin}
For all $n \in \N$, there exist $\l_n \in \C$ and $\psi_n \in H^2(I) \cap H_0^1(I)$ such that $\Vert \psi_n \Vert_{L^2(I)} = 1$,
\begin{equation} \label{eq-lambda-n}
\l_n = \sqrt n q'(0) e^{\frac {i\pi}4} + \littleo{n}{+\infty}(\sqrt n),
\end{equation}
and
\[
\big( -\partial_{yy} + inq(y)^2 \big)\psi_n = \l_n \psi_n.
\]
Moreover, for any $\e > 0$ there exists $C > 0$ such that, for all $n \in \N$,
\begin{equation} \label{eq-Agmon-psi-n}
\vert \psi_n'(-\ell_-) \vert^2 + \vert \psi_n'(\ell_+) \vert^2 \leq 
C n\, \exp\left( -\sqrt{2n}(1-\varepsilon) q'(0) \Tmin \right).
\end{equation}
\end{proposition}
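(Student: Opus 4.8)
The goal is to produce, for each large $n$, an approximate eigenfunction $\psi_n$ of the non-selfadjoint operator $K_n = -\partial_{yy} + inq(y)^2$ on $I = ]-\ell_-,\ell_+[$ with Dirichlet boundary conditions, whose eigenvalue $\l_n$ has leading term $\sqrt n\, q'(0)\,e^{i\pi/4}$, and which is exponentially small at the two endpoints. The natural approach is a WKB / semiclassical construction centered at the degeneracy point $y=0$, where the potential $inq(y)^2$ is smallest. Setting $h = n^{-1/2}$ and rescaling $y = h^{1/2} z$ (i.e. $y = n^{-1/4} z$), near $y=0$ we have $q(y)^2 \approx q'(0)^2 y^2$, so $K_n$ looks like $n^{1/2}\big(-\partial_{zz} + i q'(0)^2 z^2\big)$, a complex harmonic oscillator. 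Its spectrum is $\sqrt n\, q'(0)\, e^{i\pi/4}(2k+1)$, $k \in \N$, with Gaussian eigenfunctions $e^{-\frac{\sqrt i}{2} q'(0) z^2}$. So I would take $\psi_n$ to be a suitably cut-off and normalized version of $y \mapsto \exp\big(-\tfrac12 e^{i\pi/4} q'(0)\sqrt n\, y^2\big)$, or more precisely the first eigenfunction of the complex harmonic oscillator approximation, and verify it is an approximate eigenfunction with the claimed eigenvalue asymptotics \eqref{eq-lambda-n}.

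**Steps.** First, I would either invoke (or prove via the rescaling argument above together with a perturbation/analytic-family argument in the spirit of Kato) that $K_n$ has a genuine eigenvalue $\l_n$ satisfying \eqref{eq-lambda-n} with a normalized eigenfunction $\psi_n \in H^2 \cap H^1_0$; alternatively one works with a quasimode and corrects it, but since the statement asserts a true eigenpair I expect the cleaner route is the exact eigenvalue of a model operator plus a justification that the true first eigenvalue of $K_n$ is close to it. Second — and this is the heart of \eqref{eq-Agmon-psi-n} — I would establish Agmon-type decay estimates for $\psi_n$ away from $y=0$. The relevant Agmon distance is built from the potential: writing $\Re(inq(y)^2 - \l_n) \approx -\tfrac{n}{\sqrt2}q(y)^2$ is the wrong sign, so one instead uses the standard trick for $-\partial_{yy} + iV$: multiply the eigenvalue equation by $\bar\psi_n e^{2\phi}$ for a real weight $\phi$, integrate by parts, and take real parts to get
\[
\int_I |\partial_y(e^{\phi}\psi_n)|^2 \, dy + \int_I \big(\Re\l_n - |\phi'|^2\big) e^{2\phi}|\psi_n|^2 \, dy \le \int_I \Im(inq(y)^2)\,\Im(\cdots)\,\text{-type terms},
\]
and the key point is that on the imaginary side $nq(y)^2$ dominates, which after optimizing the weight forces $|\psi_n|$ (and, via elliptic estimates near the boundary, $|\psi_n'|$) to be bounded by $\exp\big(-\sqrt{2n}\,(1-\e)\,\phi_0(y)\big)$ where $\phi_0$ is the Agmon distance associated to the effective potential $q(y)^2/\sqrt2$; concretely $\phi_0(\ell_+) = \frac{1}{\sqrt2}\int_0^{\ell_+} |q(s)|\,ds$ and similarly at $-\ell_-$, and the minimum of the two is exactly $\frac{q'(0)}{\sqrt2}\Tmin$ by definition of $\Tmin$ (noting $q'(0)\Tmin = \min(\int_0^{\ell_+}q, \int_{-\ell_-}^0|q|)$). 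Third, I would convert the weighted $L^2$ bound on $\psi_n$ into a pointwise bound on $\psi_n'$ at the endpoints by a local elliptic regularity argument (e.g. on a small interval near $\ell_+$, $\psi_n$ solves $-\psi_n'' = (\l_n - inq^2)\psi_n$, a bounded perturbation times $\psi_n$, and one controls $\|\psi_n'\|_{L^\infty}$ near the endpoint by $\|\psi_n\|$ and $\|\psi_n''\|$ on a slightly larger interval, all of which carry the exponential smallness), picking up at most a polynomial-in-$n$ loss, which accounts for the factor $Cn$ in \eqref{eq-Agmon-psi-n}.

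**Main obstacle.** The principal difficulty is making the Agmon estimate uniform in $n$ with the sharp constant $\sqrt{2n}(1-\e)$ and with only a polynomial prefactor. One must (a) choose the weight $\phi = \sqrt{2n}(1-\e)\rho$ with $\rho$ a regularized version of the Agmon distance $\int_0^{|y|}|q|/\sqrt2$ (smoothed near $y=0$ and slightly sub-unit slope so that $|\phi'|^2 \le (1-\e)^2 \cdot 2n \cdot q(y)^2/2 \le n q(y)^2$ outside a neighborhood of $0$, absorbing the $\Re\l_n \sim \sqrt n$ term), and (b) control the error coming from the region near $y=0$ where $\phi' $ cannot keep up with $\sqrt n |q|$ — there one uses that $\psi_n$ is $O(1)$ in $L^2$ and that this bad region has length $O(n^{-1/4})$ or can be absorbed with a constant. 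The bookkeeping of how the $e^{i\pi/4}$ phase of $\l_n$ interacts with the real-part estimate (ensuring $\Re\l_n > 0$, which it is since $\Re(\sqrt n q'(0) e^{i\pi/4}) = \sqrt n q'(0)/\sqrt2 > 0$) is delicate but routine. I would also need to double-check that $\psi_n$ is genuinely the \emph{first} eigenfunction — or at least that \eqref{eq-Agmon-psi-n} holds for \emph{some} eigenpair with eigenvalue of the stated size — since the negative observability result in Section~\ref{sec-Henry} only requires exhibiting one such family; this simplifies matters, as I do not need to rule out other eigenvalues in the same annulus.
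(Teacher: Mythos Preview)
Your overall strategy matches the paper's: (1) produce a genuine eigenvalue $\l_n$ close to the first eigenvalue of the complex harmonic oscillator via a rescaling plus Kato-type perturbation argument (this is exactly what the paper does, comparing $K_n^{-1}$ to $H_n^{-1}$ and applying perturbation of isolated eigenvalues); (2) prove exponential decay of the eigenfunction via an Agmon multiplier; (3) upgrade the weighted $L^2$ bound to a pointwise bound on $\psi_n'$ at the endpoints by elliptic regularity. Parts (1) and (3) are fine and essentially identical to the paper.

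The gap is in the Agmon step. Taking only real parts, as you write, yields
\[
\|(e^{\phi}\psi_n)'\|_{L^2}^2 - \|\phi' e^{\phi}\psi_n\|_{L^2}^2 - \Re(\l_n)\|e^{\phi}\psi_n\|_{L^2}^2 = 0,
\]
because $\Re\innp{inq^2 \psi_n}{e^{2\phi}\psi_n}=0$: the purely imaginary potential contributes nothing to the real part, so there is no coercivity from $nq^2$ at all. The paper's key trick (Proposition~\ref{prop-Agmon}) is to add the \emph{imaginary} part of the quadratic form, which brings in $n\|q e^{\phi}\psi_n\|^2$ minus a cross term $2\Im\innp{(e^{\phi}\psi_n)'}{\phi' e^{\phi}\psi_n}$. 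Bounding the cross term by Young's inequality and combining with the real part gives
\[
(1-\a)\|(e^{\phi}\psi_n)'\|^2 + \int_I \big(nq^2 - \beta|\phi'|^2 - \Re\l_n - \Im\l_n\big)e^{2\phi}|\psi_n|^2 \leq 0,
\]
with $\beta = 2 + \a^{-1} - \a > 2$. This forces the constraint $|\phi'|^2 \lesssim \frac{1}{2} n q^2$, not $|\phi'|^2 \leq nq^2$ as you wrote. Concretely the admissible weight is $\phi'(y) \approx \frac{1-\e}{\sqrt 2}\sqrt n\,|q(y)|$ (the paper's $\sqrt n\,\kappa_\e$), which is smaller by a factor $\sqrt 2$ than your choice $\phi = \sqrt{2n}(1-\e)\rho$ with $\rho' = |q|/\sqrt 2$. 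This is why your claimed decay rate for $|\psi_n'|^2$ is too strong by exactly a factor $\sqrt 2$ in the exponent compared to \eqref{eq-Agmon-psi-n}; and indeed the WKB rate $|\psi_n|\sim e^{-\Re\int\sqrt{inq^2}} = e^{-\frac{\sqrt n}{\sqrt 2}\int|q|}$ confirms that the paper's exponent is sharp and yours cannot hold. Once you use the $\Re+\Im$ combination and the correct weight, the rest of your outline goes through.
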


With this proposition we now prove that we cannot have observability through $\G$ in time $T < \Tmin$.

\begin{proof}[Proof of Theorem \ref{th-main}.\eqref{item-th-ii}, assuming Proposition \ref{prop-ln-psin}]
Assume that \eqref{eq-obs-Gamma} holds. For $m \in \N$, $t \in [0,T]$, $x \in \T$ and $y \in \bar I$ we set
$$
u_m(t,x,y) = e^{-\lambda_m t}  e^{i m x} \psi_m(y),
$$
where $\l_m$ and $\psi_m$ are given by Proposition \ref{prop-ln-psin}. This defines a solution $u_m$ of \eqref{eq-Kolmogorov}. Then \eqref{eq-obs-Gamma} gives
$$
2 \Re(\l_m) \leq C \big( e^{2T \Re(\l_m)}-1\big)  \left(  \abs{\psi_m'(-\ell_-)}^2 +  \abs{\psi_m'(\ell_+)}^2 \right).
$$
Let $\varepsilon > 0$. By Proposition \ref{prop-ln-psin} there exists $C_1 > 0$ such that 
$$
(\sqrt{2} q'(0) + o(1))\sqrt{m} \leq C_1 \, m \exp\left( \sqrt{2m} q'(0) \left[ T  - (1-\varepsilon)  \Tmin + o(1) \right] \right).
$$
This implies
$$
T  \geq (1-\varepsilon) \Tmin.
$$
Since this holds for any $\e > 0$, this implies that $T \geq \Tmin$, and the conclusion follows.
\end{proof}

\section{Spectral properties of the Kolmogorov equation} \label{sec-spectral}

In this section we prove Propositions \ref{prop-well-posedness}, \ref{prop-well-posedness-n}, \ref{prop-decay-expKn} and \ref{prop-ln-psin}.

\subsection{Well-posedness and Fourier transform of the Kolmogorov equation} \label{sec-wellposedness-proof}

We begin with the well-posedness of the problems \eqref{eq-Kolmogorov} and \eqref{eq-Kolmogorov-n} for all $n \in \Z$. We also show that if $u$ is a solution of \eqref{eq-Kolmogorov} then its Fourier coefficients $u_n$, $n\in\Z$, are solutions of \eqref{eq-Kolmogorov-n}.\\

We set 
\[
H^1_{0,y}(\O) = \set{u \in L^2(\O) \st \partial_y u \in L^2(\O),\, u(x ,\ell_\pm) = 0 \text{ for almost all } x \in \T }.
\]
By the Poincar\'e inequality, this is a Hilbert space for the norm defined by
\[
\nr{u}_{H^1_{0,y}(\O)}^2 = \nr{\partial_y u}_{L^2(\O)}^2.
\]
We consider on $L^2(\O)$ the operator $K$ defined by 
\[
Ku = -\partial_{yy} u + q(y)^2 \partial_x u
\]
on the domain
\[
\Dom(K) = \set{u \in H^1_{0,y}(\O) \st K u \in L^2(\O)},
\]
where $Ku$ is understood in the sense of distributions. Similarly, for $n \in \Z$ we consider on $L^2(I)$ the operator
\begin{equation} \label{def-Kn}
K_n = -\partial_{yy} + i n q(y)^2,
\end{equation}
defined on the domain (independent of $n$)
\begin{equation} \label{dom-Kn}
\Dom(K_n) = H^2(I) \cap H_0^1(I).
\end{equation}

We notice that $K_0$ is just the usual Dirichlet Laplacian on $I$. In particular, it is selfadjoint and non-negative. However, the operators $K$ and $K_n$ for $n \neq 0$ are not symmetric. We will show that they are at least accretive. For $K$ this means that
\[
\forall u \in \Dom(K), \quad \Re \innp{K u}{u}_{L^2(\O)} \geq 0.
\]
In fact, they are even maximal accretive. This means in particular that any $z \in \C$ with $\Re(z) < 0$ belongs to the resolvent set of $K$.

\begin{proposition} 
\begin{enumerate} [\rm (i)]
\item The operator $K$ is maximal accretive on $L^2(\O)$.
\item For all $n \in \Z$, the operator $K_n$ is maximal accretive on $L^2(I)$.
\item Let $u \in \Dom(K)$ and let $(u_n)_{n \in \Z}$ be the Fourier coefficients of $u$. Then $u_n$ belongs to $\Dom(K_n)$ for all $n \in \Z$ and the Fourier coefficients of $Ku$ are the $K_n u_n$, $n \in \Z$.
\end{enumerate}
\label{prop-K-Kn}
\end{proposition}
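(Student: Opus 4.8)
The plan is to prove the three assertions of Proposition \ref{prop-K-Kn} in the order (ii), (iii), (i), since the one-dimensional operators $K_n$ are the natural building blocks and the statement on $K$ essentially assembles them via the Fourier decomposition.

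\textbf{Step 1: maximal accretivity of $K_n$.} Fix $n \in \Z$. For $u \in \Dom(K_n) = H^2(I) \cap H^1_0(I)$, integrate by parts:
\[
\innp{K_n u}{u}_{L^2(I)} = \nr{u'}_{L^2(I)}^2 + in \int_I q(y)^2 \abs{u(y)}^2 \, dy,
\]
the boundary terms vanishing because $u(\pm\ell_\pm) = 0$. Taking real parts gives $\Re\innp{K_n u}{u} = \nr{u'}_{L^2(I)}^2 \geq 0$, so $K_n$ is accretive; the same computation applied to the formal adjoint $K_n^* = -\partial_{yy} - inq(y)^2$ (same domain) shows $K_n^*$ is accretive as well. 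To upgrade to \emph{maximal} accretivity it suffices, by a standard criterion, to check that $K_n$ is closed and that $\Ran(K_n + \Id) = L^2(I)$ (equivalently, that $-1$ is in the resolvent set). Closedness follows from elliptic regularity: if $u_k \to u$ and $K_n u_k \to f$ in $L^2(I)$, then $-u_k'' = f - inq^2 u + o(1)$ is bounded in $L^2$, hence $u_k$ is bounded in $H^2$ and $u \in H^2 \cap H^1_0$ with $K_n u = f$. For surjectivity of $K_n + \Id$, the sesquilinear form
\[
a_n(u,v) = \int_I u' \overline{v'} \, dy + \int_I (1 + inq(y)^2) u \overline{v} \, dy
\]
on $H^1_0(I)$ satisfies $\Re a_n(u,u) = \nr{u'}^2 + \nr{u}^2 \geq \nr{u}_{H^1_0}^2$, so it is coercive and bounded; Lax--Milgram gives, for every $f \in L^2(I)$, a unique $u \in H^1_0(I)$ with $a_n(u,v) = \innp{f}{v}$ for all $v$, and then $-u'' = f - (1+inq^2)u \in L^2$ forces $u \in H^2$, i.e. $u \in \Dom(K_n)$ and $(K_n + \Id)u = f$. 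Hence $K_n$ is maximal accretive. (The case $n = 0$ is the Dirichlet Laplacian and is classical; I would mention it only in passing.)

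\textbf{Step 2: compatibility with the Fourier decomposition.} Let $u \in \Dom(K)$ with Fourier coefficients $(u_n)_{n \in \Z}$ in the variable $x$. Since $\partial_y u \in L^2(\O)$, Parseval shows $\sum_n \nr{u_n'}_{L^2(I)}^2 < \infty$, so each $u_n \in H^1(I)$, and the boundary condition $u(x,\ell_\pm) = 0$ for a.e. $x$ passes to $u_n(\ell_\pm) = 0$, giving $u_n \in H^1_0(I)$. Next, $Ku \in L^2(\O)$ means $-\partial_{yy} u + q(y)^2 \partial_x u \in L^2(\O)$; taking the $n$-th Fourier coefficient (distributionally in $y$, which is legitimate since Fourier coefficients commute with $\partial_y$ and turn $\partial_x$ into multiplication by $in$) yields $-u_n'' + inq(y)^2 u_n = (Ku)_n \in L^2(I)$. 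Thus $-u_n'' \in L^2(I)$, so $u_n \in H^2(I)$, hence $u_n \in \Dom(K_n)$, and $(Ku)_n = K_n u_n$. This proves (iii).

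\textbf{Step 3: maximal accretivity of $K$.} Accretivity: for $u \in \Dom(K)$, using (iii) and Parseval,
\[
\Re\innp{Ku}{u}_{L^2(\O)} = 2\pi \sum_{n \in \Z} \Re\innp{K_n u_n}{u_n}_{L^2(I)} = 2\pi \sum_{n \in \Z} \nr{u_n'}_{L^2(I)}^2 \geq 0.
\]
For maximality I would again show $\Ran(K + \Id) = L^2(\O)$ and closedness. Given $f \in L^2(\O)$ with coefficients $(f_n)$, Step 1 provides $u_n = (K_n + \Id)^{-1} f_n \in \Dom(K_n)$; from the coercivity estimate $\nr{u_n}_{H^1_0(I)}^2 \leq \Re a_n(u_n,u_n) = \Re\innp{f_n}{u_n} \leq \nr{f_n}_{L^2}\nr{u_n}_{L^2}$ one gets $\nr{u_n}_{L^2(I)} \leq \nr{f_n}_{L^2(I)}$ and $\sum_n \nr{u_n'}^2 \leq \sum_n \nr{f_n}^2 < \infty$, so $u := \sum_n u_n(y) e^{inx}$ lies in $H^1_{0,y}(\O)$; moreover $\sum_n \nr{u_n''}_{L^2}^2 = \sum_n \nr{f_n - u_n - inq^2 u_n}_{L^2}^2$ and one checks the right-hand side is finite (the only delicate term is $\sum_n n^2 \nr{q^2 u_n}^2$, controlled because $-u_n'' + inq^2 u_n = f_n - u_n$ gives $n\nr{q^2 u_n} \leq \nr{\mathrm{Im}\,\text{-part}} \leq \nr{f_n - u_n} + \nr{u_n''}$, so one solves for $\nr{u_n''}$), whence $Ku = f - u \in L^2(\O)$ and $u \in \Dom(K)$. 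Closedness of $K$ follows similarly from these uniform-in-$n$ elliptic estimates, or alternatively one notes that a maximal accretive operator is automatically closed, so it is enough to have accretivity plus the range condition.

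\textbf{Main obstacle.} The genuinely delicate point is controlling the high-frequency behaviour in Step 3: one must ensure $\sum_n \nr{u_n''}_{L^2(I)}^2 < \infty$, i.e. that $u \in \Dom(K)$ and not merely in some weaker space, and this requires extracting from the equation a bound on $n\nr{q(y)^2 u_n}_{L^2(I)}$ uniform enough to be summed. The hypothesis $\min_{I} q' > 0$ (so $q^2$ vanishes only at $y=0$, to second order) is what makes this work, and I would phrase the argument so as to only use that $q \in C^0(\bar I)$ is bounded together with the identity $-u_n'' = f_n - u_n - inq^2 u_n$ to bootstrap: first $\|u_n''\|$ and $n\|q^2 u_n\|$ are comparable to $\|f_n - u_n\|$ by taking real and imaginary parts, then both are square-summable in $n$. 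All other steps are routine form-theoretic or Fourier manipulations.
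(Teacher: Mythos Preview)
Your arguments for (ii) and (iii) are fine and match the paper in spirit; the paper dispatches maximality of $K_n$ in one line by noting that $K_n$ is a bounded accretive perturbation of the selfadjoint $K_0$, whereas you run Lax--Milgram, but both are standard.

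For (i) you take a different route from the paper, and your self-identified ``main obstacle'' is both illusory and, as you sketch it, not actually resolved. You try to show $\sum_n \|u_n''\|_{L^2(I)}^2 < \infty$ in order to conclude $Ku \in L^2(\O)$, and your real/imaginary-part argument for bounding $\sum_n n^2\|q^2 u_n\|^2$ does not close: pairing $-u_n'' + inq^2 u_n = f_n - u_n$ with $u_n$ gives control of $n\|q\,u_n\|_{L^2}^2$, not of $n\|q^2 u_n\|_{L^2}$, and the cross term that appears when you square the equation produces a factor of $n$ that is not summable against a generic $f \in L^2(\O)$. The good news is that none of this is needed. The domain of $K$ is defined by the condition $Ku \in L^2(\O)$ with $Ku$ taken distributionally, \emph{not} by $u \in H^2(\O)$. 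Since the distribution $Ku$ has $n$-th Fourier coefficient $-u_n'' + inq^2 u_n = f_n - u_n$, and $\sum_n\|f_n - u_n\|^2 \leq 2\sum_n(\|f_n\|^2 + \|u_n\|^2) < \infty$, you get $Ku = f - u \in L^2(\O)$ immediately, and $u \in \Dom(K)$ follows from $u \in H^1_{0,y}(\O)$, which you have already established. So drop the $H^2$ summability discussion entirely.

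For comparison, the paper avoids constructing the inverse via Fourier altogether: it first shows $K$ is closed (using the accretivity identity $\Re\langle Ku,u\rangle = \|\partial_y u\|^2$ to prove that a Cauchy sequence in the graph norm has its limit in $H^1_{0,y}(\O)$), and then proves $\Ran(K+1) = L^2(\O)$ by a duality argument: any $v$ orthogonal to the range satisfies $(\tilde K + 1)v = 0$ distributionally with $\tilde K = -\partial_{yy} - q^2\partial_x$, and since $\tilde K$ is itself accretive this forces $v = 0$. That route sidesteps all summability questions. Your Fourier-synthesis approach is more constructive and perfectly valid once you realise that the $H^2$ control is a red herring.
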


\begin{proof}
\stepp We begin with the second statement. It is easy to see that for $n \in \Z$ and $u \in \Dom(K_n)$ we have 
\begin{equation} \label{eq-Knun-un}
\Re \innp{K_n u}{u} = \nr{u'}_{L^2(I)}^2 \geq 0,
\end{equation}
which means that $K_n$ is accretive. Then $K_n$ is an accretive and bounded perturbation of the selfadjoint operator $K_0$, so it is maximal accretive.

\stepp Now let $u \in \Dom(K)$ and $v = Ku \in L^2(\O)$. We denote by $(u_n)_{n \in \Z}, (v_n)_{n \in \Z} \in \ell^2(\Z,L^2(I))$ the sequences of Fourier coefficients of $u$ and $v$, respectively. Let $n \in \Z$, $\vf_n \in C_0^\infty(I)$ and $\vf : (x,y) \mapsto e^{inx} \vf_n(y)$. By the Parseval identity we have
\begin{multline*}
\innp{u_n}{-\vf_n'' - in q(y)^2 \vf_n}_{L^2(I)}
= \frac 1 {2\pi} \innp{u}{-\partial_{yy} \vf - q(y)^2 \partial_x \vf}_{L^2(\O)}\\
= \frac 1 {2\pi} \innp{v}{\vf}_{L^2(\O)} = \innp{v_n}{\vf_n}_{L^2(I)}.
\end{multline*}
This implies that $u_n'' \in L^2(I)$ (hence $u_n \in H^2(I)$) and 
\[
-u_n'' + in q(y)^2 u_n = v_n.
\]
On the other hand, it is clear from the definition of $u_n$ that $u_n(-\ell_-) = u_n(\ell_+) = 0$, so $u_n \in \Dom(K_n)$. Then we can write $K_n u_n = v_n$. This gives the last statement of the proposition.

\stepp As above we can see that the Fourier coefficients of $\partial_y u$ are the $u_n'$, $n \in \Z$. Then, by \eqref{eq-Knun-un} and the Parseval identity we get
\begin{equation} \label{eq-K-accretive}
\nr{\partial_y u }_{L^2(\O)}^2 = 2\pi \sum_{n \in \Z} \nr{u_n'}_{L^2(I)}^2 = 2\pi \Re \sum_{n \in \Z} \innp{v_n}{u_n} = \Re \innp{v}{u} = \Re \innp{Ku}{u}.
\end{equation}

\stepp We check that the operator $K$ is closed. Let $(u_m)_{m \in \N}$ be a sequence in $\Dom(K)$ such that $u_m \to u$ and $K u_m \to v$ in $L^2(\O)$, for some $u,v \in L^2(\O)$. In the sense of distributions we have
\begin{equation} \label{eq-Ku-v}
-\partial_{yy} u + q(y)^2 \partial_x u = \lim_{m \to +\infty} \left( -\partial_{yy} u_m + q(y)^2 \partial_x u_m  \right) = v  \in L^2(\O).
\end{equation}
For $m,p \in \N$ we have $u_m - u_p \in \Dom(K)$, so by \eqref{eq-K-accretive} we have 
\begin{align*}
\nr{u_m - u_p}_{H^1_{0,y}(\O)}^2
= \Re \innp{K (u_m-u_p)}{u_m-u_p}_{L^2(\O)}^2
\limt m {+\infty} 0.
\end{align*}
This implies that the sequence $(u_m)_{m \in \N}$ has a limit in $H^1_{0,y}(\O)$, which is necessarily $u$. By the trace theorem, we see that $u_m$ also goes to $u$ in $L^2(\partial\O)$, so $u$ vanishes on $\partial \O$. Finally, we have proved that $u$ belongs to $\Dom(K)$ and, by \eqref{eq-Ku-v}, $Ku = v$. This proves that $K$ is closed.

\stepp By \eqref{eq-K-accretive} the operator $K$ is accretive on $L^2(\O)$. Then, for $u \in \Dom(K)$ we have 
\begin{equation} \label{eq-norm-K+1}
\begin{aligned}
\nr{(K+1)u}_{L^2(\O)}^2  \geq \nr{Ku}_{L^2(\O)}^2 + \nr{u}_{L^2(\O)}^2,
\end{aligned}
\end{equation}
so $(K+1)$ is injective with closed range. Now let $v \in L^2(\O)$ be such that 
\[
\forall u \in \Dom(K), \quad \innp{(K+1)u}{v} = 0.
\]
Then, in the sense of distributions we have 
\[
-\partial_{yy} v - q(y)^2 \partial_x v + v = 0.
\]
As above we can check that the operator $\tilde K = -\partial_{yy} - q(y)^2 \partial_x$, defined on the domain
\[
\Dom(\tilde K) = \set{u \in H^1_{0,y}(\O) \st \tilde K u \in L^2(\O)},
\]
is accretive. This implies that $v = 0$ (in fact, $\tilde K$ is the adjoint of $K$). Thus $\Ran(K+1)^\bot = \{0\}$ and $(K+1)$ is invertible. By \eqref{eq-norm-K+1}, its inverse is bounded. This proves that $-1$ belongs to the resolvent set of $K$, and hence $K$ is maximal accretive.
\end{proof}

By the Lummer-Philipps Theorem (see for instance \cite{engel}), the operator $(-K)$ generates a contractions semigroup $(e^{-tK})_{t \geq 0}$ on $L^2(\O)$. Given $\uo \in \Dom(K)$, the function $u : t \mapsto e^{-tK} u$ belongs to $C^0(\R_+,\Dom(K)) \cap C^1(\R_+,L^2(\O))$. This gives a strong solution of \eqref{eq-Kolmogorov}. More generally, for $\uo \in L^2(\O)$, the function $t \mapsto e^{-tK} \uo$ belongs to $C^0(\R_+,L^2(\O))$. This gives a weak solution of \eqref{eq-Kolmogorov}. The same applies on $L^2(I)$ to $K_n$ and \eqref{eq-Kolmogorov-n} for any $n \in \Z$. In particular, if $u_n$ is a solution of \eqref{eq-Kolmogorov-n} then for all $t_1,t_2 \in [0,T]$ such that $t_1 \leq t_2$ we have 
\begin{equation} \label{eq-semigroup-contraction}
\nr{u_n(t_2)}_{L^2(I)}^2 \leq \nr{u_n(t_1)}_{L^2(I)}^2.
\end{equation}

Now, we show that the solutions of \eqref{eq-Kolmogorov-n} for $n \in \Z$ give the Fourier coefficients of a solution of \eqref{eq-Kolmogorov}

\begin{proposition} \label{prop-Fourier-propagateurs}
Let $\uo \in L^2(\O)$. For $t \geq 0$ we set $u(t) = e^{-tK} \uo \in L^2(\O)$. We denote by $(\uon)_{n \in \Z}$ and $(u_n(t))_{n \in \Z}$ the Fourier coefficients of $\uo$ and $u(t)$, $t \geq 0$, respectively. Then for all $n \in \Z$ and $t \geq 0$ we have 
\[
u_n(t) = e^{-t K_n} \uon.
\]
\end{proposition}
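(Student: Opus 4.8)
The plan is to prove the identity first for regular initial data $\uo \in \Dom(K)$, where the relevant curves are differentiable in time, and then to pass to the limit to cover all of $L^2(\O)$. The basic observation I would use throughout is that, for fixed $n$, the map $\Pi_n : v \mapsto v_n$ assigning to a function its $n$-th Fourier coefficient is a bounded linear operator from $L^2(\O)$ to $L^2(I)$ (by the Cauchy--Schwarz inequality one has $\nr{v_n}_{L^2(I)} \leq (2\pi)^{-1/2}\nr{v}_{L^2(\O)}$). Being bounded and linear, $\Pi_n$ commutes with limits in $L^2(\O)$ and with time derivatives along any $C^1$ curve in $L^2(\O)$.

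For $\uo \in \Dom(K)$, set $u(t) = e^{-tK}\uo$. As recalled above (Lumer--Phillips), the curve $t \mapsto u(t)$ lies in $C^0(\R_+,\Dom(K)) \cap C^1(\R_+,L^2(\O))$ and satisfies $u'(t) = -K u(t)$ with $u(0) = \uo$. Applying $\Pi_n$ and invoking the third point of Proposition \ref{prop-K-Kn}, I get that $u_n(t) = \Pi_n u(t)$ belongs to $\Dom(K_n)$ for every $t \geq 0$ and that
\[
u_n'(t) = \Pi_n u'(t) = -\Pi_n \big( K u(t) \big) = -K_n u_n(t), \qquad u_n(0) = \uon .
\]
Thus $t \mapsto u_n(t)$ is a (classical) solution of the abstract Cauchy problem generated by $(-K_n)$ with initial datum $\uon \in \Dom(K_n)$. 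Since $(-K_n)$ generates a $C_0$-semigroup on $L^2(I)$ by Proposition \ref{prop-K-Kn}, this solution is unique and equals $t \mapsto e^{-tK_n}\uon$; hence $u_n(t) = e^{-tK_n}\uon$ for all $t \geq 0$, which is the claim when $\uo \in \Dom(K)$.

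To remove the regularity assumption, I would take an arbitrary $\uo \in L^2(\O)$ and a sequence $(\uo^{(k)})_{k}$ in $\Dom(K)$ with $\uo^{(k)} \to \uo$ in $L^2(\O)$ (possible since the generator $(-K)$ is densely defined). Writing $u^{(k)}(t) = e^{-tK}\uo^{(k)}$, the previous step gives $\Pi_n u^{(k)}(t) = e^{-tK_n}\Pi_n \uo^{(k)}$. Letting $k\to+\infty$: on the left, $u^{(k)}(t)\to u(t)$ in $L^2(\O)$ because $e^{-tK}$ is a contraction, so $\Pi_n u^{(k)}(t)\to u_n(t)$ in $L^2(I)$ by continuity of $\Pi_n$; on the right, $\Pi_n \uo^{(k)}\to \uon$ in $L^2(I)$, hence $e^{-tK_n}\Pi_n \uo^{(k)}\to e^{-tK_n}\uon$ by continuity of $e^{-tK_n}$. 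Equating the limits yields $u_n(t) = e^{-tK_n}\uon$. The argument is essentially bookkeeping; the only point that needs a little care is the interchange of the time derivative with the (infinite) Fourier decomposition, which is precisely what the boundedness of $\Pi_n$ together with Proposition \ref{prop-K-Kn} handles, so I do not expect any genuine obstacle here.
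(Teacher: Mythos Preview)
Your proof is correct and follows essentially the same route as the paper: first treat $\uo \in \Dom(K)$ by differentiating the Fourier coefficient in time (the paper does this via differentiation under the integral sign, you phrase it via the boundedness of $\Pi_n$) and invoking Proposition \ref{prop-K-Kn} to identify the resulting ODE with the Cauchy problem for $K_n$, then extend to all of $L^2(\O)$ by density and the contraction property of both semigroups. The two presentations are interchangeable.
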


\begin{proof}
First assume that $\uo \in \Dom(K)$. Let $n \in \Z$ and $t \geq 0$. By differentiation under the integral sign and Proposition \ref{prop-K-Kn} we have in $L^2(I)$, for $h > 0$,
\begin{multline*}
\frac {u_n(t+h) - u_n(t)} h = \frac 1 {2\pi} \int_\T e^{-inx} \frac {u(t+h,x) - u(t,x)} h \, dx\\
\limt h 0 - \frac 1 {2\pi} \int_\T e^{-inx} K u(t,x) \, dx = -K_n u_n(t).
\end{multline*}
The conclusion follows in this case.

In general, since $\Dom(K)$ is dense in $L^2(\O)$, we can consider a sequence $(u_{\mathsf{o}}^m)_{m \in \N}$ in $\Dom(K)$ which converges to $\uo$ in $L^2(\O)$. For $m \in \N$ we denote by $u_{\mathsf{o},n}^m$, $n \in \Z$, the Fourier coefficients of $u_{\mathsf{o}}^m$. Then $u_{\mathsf{o},n}^m$ goes to $u_{\mathsf{o},n}$ in $L^2(I)$ for all $n \in \Z$. Then for $n \in \Z$ we have by continuity of $e^{-tK}$ and $e^{-tK_n}$ in $L^2(\O)$ and $L^2(I)$ respectively
\begin{align*}
u_n(t) 
& = \frac 1 {2\pi} \int_\T e^{-inx} (e^{-tK} \uo)(x) \,dx\\
& = \lim_{m \to +\infty}  \frac 1 {2\pi} \int_\T e^{-inx} (e^{-tK} u_{\mathsf{o}}^m)(x) \,dx\\
& = \lim_{m \to +\infty} e^{-tK_n} u_{\mathsf{o},n}^m\\
& = e^{-tK_n} \uon.
\end{align*}
The proposition is proved.
\end{proof}

Before we can state Theorem \ref{th-main}, we still have to check that the right-hand side of \eqref{eq-obs-Gamma} makes sense (one would not have this difficulty with observabililty through an open subset of $\O$). 
To do so, we investigate the regularizing effect of equation \eqref{eq-Kolmogorov}, and prove that
even if the initial condition $\uo$ merely belongs to $L^2(\O)$, the solution is smooth enough for the right-hand side
of \eqref{eq-obs-Gamma} to be well defined.
The proof of this result relies on Proposition \ref{prop-decay-expKn}, which will be proved in Section \ref{sec-Henry} below.

\begin{proposition} \label{prop-regularite}
For $\uo \in L^2(\O)$ and $\tau > 0$ we have 
\[
e^{-\tau K} \uo \in H^2(\O) \cap H^1_0(\O) \subset \Dom(K).
\]
\end{proposition}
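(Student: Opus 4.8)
The plan is to prove the regularizing estimate $e^{-\tau K}\uo \in H^2(\O) \cap H^1_0(\O)$ by working frequency by frequency, using Proposition~\ref{prop-Fourier-propagateurs} to reduce to the one-dimensional operators $K_n$, and then summing the resulting bounds over $n \in \Z$. Write $u = e^{-\tau K}\uo$ and $u_n = e^{-\tau K_n}\uon$. Since $u \in H^2(\O) \cap H^1_0(\O)$ is equivalent to $\sum_n (1+n^2)^2 \nr{u_n}_{L^2(I)}^2 + \sum_n (1+n^2)\nr{u_n'}_{L^2(I)}^2 + \sum_n \nr{u_n''}_{L^2(I)}^2 < \infty$ (together with the boundary condition $u_n(\pm\ell_\pm)=0$, which holds since $u_n \in \Dom(K_n)$), it suffices to obtain, for each fixed $n$, an estimate of the form $\nr{u_n}_{H^2(I)}^2 \leq C(n,\tau)\nr{\uon}_{L^2(I)}^2$ with $C(n,\tau)$ growing at most polynomially in $n$ (and controlled for small $\tau$), and where the tail $\sum_n$ still converges because $\nr{\uon}_{L^2(I)}^2$ is summable. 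This is where Proposition~\ref{prop-decay-expKn} enters: it gives $\nr{u_n(\tau)}_{L^2(I)}^2 \leq C e^{-2\gamma\sqrt{|n|}\,\tau}\nr{\uon}_{L^2(I)}^2$, so the $L^2$ norm of $u_n(\tau)$ decays superpolynomially in $n$, which will absorb any polynomial loss coming from the higher-order estimates.

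The key intermediate step is a smoothing estimate for the semigroup $e^{-tK_n}$ controlling $\nr{u_n}_{H^2(I)}$ in terms of $\nr{u_n}_{L^2(I)}$ at an earlier time. I would split the interval $[0,\tau]$ in two, say at $\tau/2$. On $[0,\tau/2]$, use the decay of Proposition~\ref{prop-decay-expKn} (or merely the contraction \eqref{eq-semigroup-contraction}) to pass from $\uon$ to $u_n(\tau/2)$. On $[\tau/2,\tau]$, use an energy/caloric estimate: multiply the equation $\partial_t u_n - u_n'' + inq(y)^2 u_n = 0$ by $\bar u_n$ and by $-\bar u_n''$ and integrate in $y$ and $t$, using the accretivity identity \eqref{eq-Knun-un}. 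The first multiplication gives $\int_{\tau/2}^{\tau}\nr{u_n'(t)}_{L^2(I)}^2\,dt \leq \nr{u_n(\tau/2)}_{L^2(I)}^2$. The second, after controlling the cross term $\Re\innp{inq^2 u_n}{-u_n''}$ by $|n|\nr{q^2}_{L^\infty}\nr{u_n}_{L^2}\nr{u_n''}_{L^2} \leq \frac12\nr{u_n''}_{L^2}^2 + C n^2 \nr{u_n}_{L^2}^2$, gives a bound of the form $\nr{u_n'(\tau)}_{L^2(I)}^2 + \int_{\tau/2}^\tau \nr{u_n''(t)}_{L^2(I)}^2\,dt \lesssim (1/\tau + Cn^2)\sup_{[\tau/2,\tau]}\nr{u_n(t)}_{L^2(I)}^2$ after integrating in $t$ over $[\tau/2,\tau]$ and using a mean-value argument to choose a good intermediate time; and then $\nr{u_n''(\tau)}_{L^2(I)}^2$ is recovered by one more differentiated energy estimate (or by reading $u_n'' = inq^2 u_n - \partial_t u_n$ off the equation together with a bound on $\partial_t u_n = -K_n u_n$, treating $\partial_t u_n$ by the same scheme applied to the equation it solves). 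Either way, the constant in front of $\nr{u_n}_{L^2}^2$ grows only polynomially in $n$ (like $n^2$ or $n^4$).

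Combining the two steps: $\nr{u_n(\tau)}_{H^2(I)}^2 \leq P(n,1/\tau)\,\nr{u_n(\tau/2)}_{L^2(I)}^2 \leq P(n,1/\tau)\, C e^{-\gamma\sqrt{|n|}\,\tau}\nr{\uon}_{L^2(I)}^2$ for some polynomial $P$. Since $P(n,1/\tau)e^{-\gamma\sqrt{|n|}\tau}$ is bounded uniformly in $n$ (for $\tau$ fixed), we get $\sum_{n\in\Z}(1+n^2)^2\nr{u_n(\tau)}_{L^2(I)}^2 \lesssim_\tau \sum_n \nr{\uon}_{L^2(I)}^2 = \frac{1}{2\pi}\nr{\uo}_{L^2(\O)}^2 < \infty$, and likewise for the intermediate norms. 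This yields $u(\tau) \in H^2(\O)$; the boundary condition $u(\tau) \in H^1_0(\O)$ follows since each $u_n(\tau) \in \Dom(K_n)$ vanishes at $\pm\ell_\pm$, and $H^2(\O)\cap H^1_0(\O) \subset \Dom(K)$ is immediate from the definition of $\Dom(K)$ (noting $q(y)^2\partial_x u \in L^2(\O)$ when $u \in H^1(\O)$).

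The main obstacle I anticipate is the differentiated energy estimate controlling $\nr{u_n''}_{L^2(I)}$: the potential term $inq(y)^2$ is not small, it is of size $|n|$, so one must be careful that the loss is only polynomial in $n$ and does not, for instance, pick up an exponential factor $e^{c\sqrt n}$ that could compete with the decay in Proposition~\ref{prop-decay-expKn} in a borderline way. In fact any polynomial loss is harmless here because the decay is exponential in $\sqrt{|n|}$; the real care is just in the bookkeeping of the boundary terms arising from integration by parts against $u_n''$ and $u_n'''$ (the solution is only in $H^2(I)$ a priori, so one should either work with the Galerkin/spectral approximation of $K_n$ or justify the manipulations on $\Dom(K_n^2)$ and pass to the limit by density), together with keeping track of the explicit $\tau$-dependence so that the statement holds for every $\tau>0$.
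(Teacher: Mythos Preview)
Your proposal is correct and follows the same overall architecture as the paper: pass to Fourier modes via Proposition~\ref{prop-Fourier-propagateurs}, prove a parabolic smoothing estimate for each $K_n$ that loses only polynomially in $n$, absorb that loss with the exponential decay of Proposition~\ref{prop-decay-expKn}, and sum by Parseval. The only substantive difference is in how the smoothing step is carried out. You test against $\bar u_n$ and $-\bar u_n''$, then use a mean-value choice of intermediate time and a bootstrap to reach $\nr{u_n''(\tau)}$; the paper instead tests the equation against the time-weighted multiplier $(t-\tau)\overline{\partial_t u_n}$ on $[\tau,2\tau]$, which delivers a pointwise bound on $\nr{\partial_t u_n(2\tau)} = \nr{K_n u_n(2\tau)}$ in a single stroke and avoids the mean-value argument. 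The paper then reads off $H^2$ regularity from $\nr{K u(\tau)}_{L^2(\O)} < \infty$ together with $\partial_x u(\tau) \in L^2(\O)$, rather than assembling the $H^2$ norm mode by mode. Both routes are standard parabolic-smoothing arguments; the paper's weighted-in-time multiplier is a bit cleaner (no intermediate time to choose, no separate bootstrap for $\partial_t u_n$), while yours is more hands-on with the Sobolev norms. Your remark about justifying the manipulations on $\Dom(K_n^2)$ and passing to the limit is exactly what the paper does: it first assumes $\uo \in C_0^\infty(\O)$ and then approximates.
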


\begin{proof}
\stepp For $t > 0$ we set $u(t) = e^{-tK} \uo$. We denote by $\uon$ and $u_n(t)$, $n \in \Z$, the Fourier coefficients of $\uo$ and $u(t)$, respectively. For $n \in \Z$ and $t > 0$ we have $u_n(t) = e^{-t K_n} \uon$ by Proposition \ref{prop-Fourier-propagateurs}.

\stepp By Proposition \ref{prop-decay-expKn} there exists $c > 0$ such that for $\tau > 0$ and $k \in \N$ we have
\begin{equation} \label{eq-estim-nk-un}
n^k \nr{u_n(\tau)}_{L^2(I)} \leq \frac c {\tau^{2k}} \nr{\uon}_{L^2(I)}.
\end{equation}
This implies in particular that $u(\tau) \in C^\infty(\T,L^2(I))$.

\stepp Let $\tau > 0$. Assume that $\uo \in C_0^\infty(\O) \subset \Dom(K^2)$. Then $u \in C^1([0,\tau], \Dom(K))$ and $u_n \in C^1([0,\tau], \Dom(K_n))$ for all $n \in \Z$. Let $n \in \Z$. Since $(\partial_t + K_n) u_n = 0$ we have 
\begin{multline*}
0 = \Re \int_\tau^{2\tau} (t-\tau) \innp{(\partial_t + K_n) u_n(t)}{\partial_t u_n(t)}_{L^2(I)} \, dt\\
 = \int_\tau^{2\tau} (t-\tau) \nr{\partial_t u_n(t)}_{L^2(I)}^2 \, dt + \int_\tau^{2\tau} (t-\tau) \, \Re \innp{K_n u_n(t)}{\partial_t u_n(t)}_{L^2(I)} \, dt. 
\end{multline*}
Since 
\begin{eqnarray*}
\lefteqn{\Re \innp{K_n u_n(t)}{\partial_t u_n(t)}}\\
&& = \Re \innp{-\partial_{yy} u_n(t)}{\partial_t u_n(t)} + \Re \innp{inq(y)^2 u_n(t)}{\partial_t u_n(t)}\\
&& \geq \frac 12 \frac d {dt} \innp{-\partial_{yy} u_n(t)}{u_n(t)} - n \nr{q}_\infty^2 \nr{u_n(t)} \nr{\partial_t u_n(t)}\\
&& \geq \frac 12 \frac d {dt} \Re \innp{K_n u_n(t)}{u_n(t)} - \frac {n^2 \nr{q}_\infty^4 \nr{u_n(t)}^2}2  - \frac {\nr{\partial_t u_n(t)}^2} 2,
\end{eqnarray*}
with \eqref{eq-semigroup-contraction} this gives 
\begin{multline*}
\int_{\tau}^{2\tau} (t-\tau) \frac {\nr{\partial_t u_n(t)}^2} 2 \, dt\\
\leq - \frac 12  \Re \int_{\tau}^{2\tau} (t-\tau) \frac d {dt} \innp{K_n u_n(t)}{u_n(t)} \, dt +  \frac {n^2 \tau^2 \nr{q}_\infty^4 \nr{u_n(\tau)}^2} 4.
\end{multline*}
An integration by parts gives 
\begin{eqnarray*}
\lefteqn{- \Re \int_\tau^{2\tau} (t-\tau) \frac d {dt} \innp{K_n u_n(t)}{u_n(t)} \, dt}\\
&& = - \tau \Re \innp{K_n u_n(2\tau)}{u_n(2\tau)} + \int_\tau^{2\tau} \Re \innp{K_n u_n(t)}{u_n(t)} \, dt\\
&& \leq - \frac 12 \int_\tau^{2\tau} \frac d {dt} \nr{u_n(t)}^2 \,dt\\
&& \leq \frac {\nr{u_n(\tau)}^2} 2 - \frac {\nr{u_n(2\tau)}^2} 2.
\end{eqnarray*}
On the other hand, since the function $t \mapsto \partial_t u_n(t)$ is also a solution of \eqref{eq-Kolmogorov}, its norm is non-increasing, so 
\[
\frac 12 \int_\tau^{2\tau} (t-\tau) \nr{\partial_t u(t)}^2 \, dt \geq \frac {\tau^2 \nr{\partial_t u_n(2 \tau)}^2} 4.
\]
Finally, with \eqref{eq-estim-nk-un} we get 
\begin{align*}
\nr{\partial_t u_n(2 \tau)}^2 + \frac {2 \nr{u_n(2\tau)}^2}{\tau^2}
& \leq \frac {2 \nr{u_n(\tau)}^2}{\tau^2} +  n^2 \nr{q}_\infty^4 \nr{u_n(\tau)}^2 \\
& \leq \frac {2 \nr{\uon}^2}{\tau^2} +  \frac {c^2 \nr{q}_\infty^4 \nr{\uon}^2}{\tau^4}.
\end{align*}
Hence, by the Parseval identity, 
\begin{equation} \label{eq-Ku-tau}
\nr{Ku(2\tau)}^2 = \nr{\partial_t u(2\tau)}_{L^2(\O)}^2 \leq  \frac {2 \nr{\uo}^2}{\tau^2} +  \frac {c^2 \nr{q}_\infty^4 \nr{\uo}^2}{\tau^4}.
\end{equation}

\stepp Let $\uo \in L^2(\O)$ and $(\uom)_{m \in \N}$ be a sequence in $C_0^\infty(\O)$ which goes to $\uo$ in $L^2(\O)$. For $\tau > 0$ we set $u(\tau) = e^{-\tau K} \uo$ and $u_m(\tau) = e^{-\tau K} \uom$, $m \in \N$. Let $\d > 0$. $u_m(t)$ converges to $u(t)$ for any $t \geq 0$ and the function $t \mapsto u_m'(t)$ has a uniform limit on $[\d,+\infty[$. This implies that the function $u$ belongs to $C^1(]0,+\infty[,L^2(\O))$. Then, since $-K$ is the generator of the semigroup $e^{-tK}$, $u(t)$ belongs to $\Dom(K)$ for all $t > 0$ and $u'(t) = -K u(t)$.

\stepp Finally, for $\uo \in L^2(\O)$ and $\tau > 0$ we have $(-\partial_{yy} - q(y)^2 \partial_x) u(\tau) \in L^2(\O)$ and $\partial_x u(\tau) \in L^2(\O)$, so $-\partial_{yy}u(\tau) \in L^2(\O)$. Since we also have $\partial_{xx}u(\tau) \in L^2(\O)$, this proves that $u(\tau)$ belongs to $H^2(\O)$. The fact that $u(\tau)$ is also in $H_0^1(\O)$ is a consequence of the fact that it is in $\Dom(K) \subset H^1_{0,y}(\O)$, and the proof is complete.
\end{proof}

\subsection{General spectral properties for non-selfajdoint Schr\"odinger operators}

In the rest of this section, we prove Propositions \ref{prop-decay-expKn} and \ref{prop-ln-psin}. They can both be rewritten in terms of the operator $K_n$ defined by \eqref{def-Kn}-\eqref{dom-Kn}.\\

We have seen in Proposition \ref{prop-K-Kn} that $K_n$ is a maximal accretive operator on $L^2(I)$. In particular, the resolvent set of $K_n$ is not empty. And since $\Dom(K_n)$ is compactly embedded in $L^2(I)$, the resolvent of $K_n$ is compact. This implies that the spectrum of $K_n$ consists of eigenvalues which have finite algebraic multiplicities.\\ 

We have already said that $K_n$ generates a contractions semigroup on $L^2(I)$ (see \eqref{eq-semigroup-contraction}). However, this is not enough for Proposition \ref{prop-decay-expKn}. For $n = 0$, the operator $K_0$ is selfadjoint and the decay of the corresponding semigroup is given by the functional calculus. If we denote by $\l_0$ the first eigenvalue of $K_0$, then $\l_0$ is positive and for all $t \geq 0$ we have
\[
\nr{e^{-t K_0}}_{\Lc(L^2(I))} \leq e^{-t\l_0}.
\]
For $n \neq 0$, the operator $K_n$ is not selfadjoint, and the link between the exponential decay of $e^{-tK_n}$ and the real parts of the eigenvalues of $K_n$ is not that direct.\\

The purpose of the rest of this section is then to give some spectral properties for the non-selfadjoint operator $K_n$. We are interested in the location of the spectrum (and in particular the eigenvalue with the smallest real part), the size of the resolvent $(K_n-z)\inv$ for $z$ outside this spectrum (for a non-selfadjoint operator, the resolvent can have a large norm even for $z$ far from the spectrum) and then an estimate of the propagator $e^{-t K_n}$ for $t \geq 0$.\\

The properties of the operator $K_n$ will be deduced from analogous results for the classical complex harmonic oscillators and the complex Airy operators.\\

With the Agmon estimates (see Paragraph \ref{sec-Agmon} below), we will see that for large $n$ the eigenvectors of $K_n$ associated to ``small'' eigenvalues should be in some sense localized close to 0. And near 0 we have 
\[
in q(y)^2 \sim i n q'(0)^2 y^2.
\]
Thus, it is expected that, at least for a small spectral parameter, the spectral properties of $K_n$ for large $n$ should be close to those of the harmonic oscillator
\begin{equation} \label{def-Hn}
H_n = -\partial_{yy} + i n q'(0)^2 y^2,
\end{equation}
defined on the domain 
\[
\Dom(H_n) = \set{ u \in H^2(\R) \st yu \in H^1(\R), y^2 u \in L^2(\R)}.
\]

It is known (see for instance \cite[\S 14.4]{helffer}) that $H_n$ defines for all $n \in \N^*$ a maximal accretive operator on $L^2(\R)$. Its spectrum consists of a sequence of (geometrically and algebraically) simple eigenvalues, given by 
\[
(2k - 1) \sqrt n q'(0) e^{\frac {i\pi}4} , \quad k \in \N^*,
\]
and for each $k \in \N^*$, a corresponding eigenfunction is given by 
\begin{equation} \label{eq-eigenfunction-Hn}
y \mapsto P_k(e^{\frac {i\pi}8} \a y) e^{- \frac {(\a y)^2}{2\sqrt 2} - \frac {i(\a y)^2}{2\sqrt 2}}, \quad \a = n^{\frac 14} q'(0)^{\frac 12},
\end{equation}
where $P_k$ is a polynomial of degree $k$. In particular,
\[
\inf_{\s \in \Sp(H_n)} \Re(\s) = \frac {\sqrt n q'(0)}{\sqrt 2}.
\]
This is not enough to get a decay estimate for the propagator $e^{-t H_n}$, $t \geq 0$. However, it is also known that for $\gamma < \frac {q'(0)}{2}$ there exists $c > 0$ such that 
\begin{equation} \label{estim-Hn-resolvent}
\sup_{\Re(z) \leq \g \sqrt n} \nr{(H_n-z)\inv}_{\Lc(L^2(\R))} \leq \frac {c}{\sqrt n}
\end{equation}
(in fact we have more precise resolvent estimates \cite{HitrikSjoVio13,KrejcirikSieTatVio15}). Then we deduce (see for instance \cite{engel} for the theory of semigroups) that there exists $C > 0$ such that for all $t \geq 0$ we have 
\begin{equation} \label{estim-Hn-propagator}
\nr{e^{-tH_n}}_{\Lc(L^2(\R))} \leq C e^{-t\gamma \sqrt n}.
\end{equation}
Proposition \ref{prop-decay-expKn} precisely says that we have a similar estimate for the propagator generated by $K_n$, while Proposition \ref{prop-ln-psin} shows that for large $n$ the first eigenvalue of $K_n$ is close to the first eigenvalue of $H_n$. The decay of the corresponding eigenfunction has the same form as in \eqref{eq-eigenfunction-Hn}, but it depends on the values of $q$ on the whole interval $I$, and not only on its behavior in a neighborhood of 0.\\

For the proofs, we will also compare $K_n$ to some complex Airy operators. Near $y_0 \in I \setminus \set 0$, the potential $in q(y)^2$ looks like $inq(y_0)^2 + 2in q(y_0) q'(y_0) (y-y_0)$, with $q(y_0) q'(y_0) \neq 0$. It is then useful to recall the properties of Schr\"odinger operators with linear purely imaginary potentials.\\

Given $\a \in \R \setminus \set 0$, we consider on $L^2(\R)$ the operator 
\begin{equation} \label{def-Aa}
A_\a u = -\partial_{yy} u + i \a y u,
\end{equation}
defined on the domain
\begin{align*}
\Dom(A_\a) = \set{u \in L^2(\R) \st (-u'' + i \a y u) \in L^2(\R)}.
\end{align*}
This complex Airy operator is now well understood, see for instance \cite{Helffer11,KrejcirikSie15} and references therein. We notice that for $\a > 0$ we have 
\[
\Th_\a \inv (A_\a - z)\inv \Th_\a = \frac 1 {\a^{\frac 23}} \big( A_1 - z \a^{-\frac 23} \big) \inv,
\]
where $\Th_a$ is the unitary operator defined on $L^2(\R)$ by 
\[
(\Th_\a u)(y) = \a^{\frac 16} u \big( \a^{\frac 13} y \big).
\]
Moreover, $A_{-\a} = A_\a^*$. Then, from the properties of $A_{1}$ we deduce the following result.

\begin{proposition}
\begin{enumerate}[\rm (i)]
\item The spectrum of $A_\a$ is empty for any $\a \in \R\setminus \set 0$.
\item Let $\gamma \in \R$. Then there exists $c > 0$ such that for all $\a \in \R \setminus \set 0$ we have 
\[
\sup_{\Re(z) \leq \g \abs \a^{\frac 2 3}} \nr{(A_\a -z)\inv} \leq \frac c {\abs \a^{\frac 23}}.
\]
\end{enumerate} \label{prop-Airy-R}
\end{proposition}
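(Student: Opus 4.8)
The plan is to reduce both assertions to the classical facts about the single operator $A_1 = -\partial_{yy} + iy$, using the scaling and duality relations recalled just above, and then to recall (or briefly sketch) why those facts hold for $A_1$.

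First, the reduction. For $\a > 0$ the unitary intertwining $\Th_\a\inv(A_\a - z)\inv\Th_\a = \a^{-\frac23}(A_1 - z\a^{-\frac23})\inv$ shows that $z$ lies in the resolvent set of $A_\a$ if and only if $z\a^{-\frac23}$ lies in that of $A_1$, and that $\nr{(A_\a - z)\inv} = \a^{-\frac23}\nr{(A_1 - z\a^{-\frac23})\inv}$. For $\a < 0$ the identity $A_\a = A_{-\a}^*$ gives $\Sp(A_\a) = \overline{\Sp(A_{-\a})}$ and, passing to adjoints, $\nr{(A_\a - z)\inv} = \nr{(A_{-\a} - \bar z)\inv}$. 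Hence it suffices to prove that $\Sp(A_1) = \emptyset$ and that for every $\g \in \R$ there is $c_\g > 0$ with $\nr{(A_1 - w)\inv} \leq c_\g$ whenever $\Re(w) \leq \g$: the first gives $\Sp(A_\a) = \emptyset$ for all $\a \neq 0$, and, since $\Re(z) \leq \g\abs\a^{\frac23}$ forces $\Re(z\abs\a^{-\frac23}) \leq \g$ (replacing $z$ by $\bar z$ when $\a < 0$), the second gives $\nr{(A_\a - z)\inv} \leq c_\g\abs\a^{-\frac23}$, which is statement (ii).

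It remains to handle $A_1$. These properties are classical (see \cite{Helffer11,KrejcirikSie15} and the references therein); the argument I would give goes through the Fourier transform, which conjugates $A_1$ to the first order operator $\xi^2 - \partial_\xi$ on $L^2(\R)$. For $z \in \C$, integrating the ODE $-v'(\xi) + (\xi^2 - z)v(\xi) = g(\xi)$ by variation of constants and keeping the unique solution that can belong to $L^2$, one obtains a bounded inverse of $A_1 - z$ valid for \emph{every} $z$, so $\Sp(A_1) = \emptyset$; in the Fourier variables its integral kernel is
\[
\exp\big(\Phi_z(\xi) - \Phi_z(s)\big)\, \1_{\{s > \xi\}}, \qquad \Phi_z(\xi) = \tfrac13 \xi^3 - z\xi .
\]
For $s \geq \xi$, setting $r = s-\xi$ and $t = \Re(z)$, the elementary bound $\xi^2 + \xi s + s^2 \geq \tfrac14(s-\xi)^2$ yields $\Re\big(\Phi_z(\xi) - \Phi_z(s)\big) \leq -\tfrac1{12}r^3 + t r$, so the kernel is dominated by $k_t(s-\xi)$ with $k_t(r) = \1_{r>0}\,e^{-r^3/12 + tr} \in L^1(\R)$, whose $L^1$ norm is finite and non-decreasing in $t$. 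The Schur test then gives $\nr{(A_1 - z)\inv}_{\Lc(L^2(\R))} \leq \nr{k_{\Re(z)}}_{L^1}$, hence $\leq \nr{k_\g}_{L^1}$ as soon as $\Re(z) \leq \g$.

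The one delicate point — the main obstacle — is precisely this resolvent bound for $A_1$, uniform up to and to the right of the imaginary axis. Accretivity alone only yields $\nr{(A_1 - z)\inv} \leq \abs{\Re z}\inv$ for $\Re z < 0$, which degenerates as $\Re z \to 0^-$ and gives nothing for $\Re z \geq 0$; it is the reduction to a first order operator on the Fourier side that produces a genuinely global resolvent with an $L^1$-convolution majorant. Everything else (the scaling identity, the adjoint relation, the Schur test) is routine.
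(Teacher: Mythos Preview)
Your proof is correct and follows exactly the route the paper indicates: the paper states the scaling identity $\Th_\a\inv(A_\a-z)\inv\Th_\a = \a^{-2/3}(A_1 - z\a^{-2/3})\inv$ for $\a>0$ and the adjoint relation $A_{-\a}=A_\a^*$, and then simply asserts that the proposition follows ``from the properties of $A_1$'', referring to \cite{Helffer11,KrejcirikSie15} for those properties. Your reduction is identical; where you go beyond the paper is in supplying the Fourier-side argument for $A_1$ itself (conjugation to $\xi^2-\partial_\xi$, the explicit convolution kernel $e^{\Phi_z(\xi)-\Phi_z(s)}\1_{s>\xi}$, and the Schur/Young bound via $k_t(r)=\1_{r>0}e^{-r^3/12+tr}$), which the paper does not reproduce. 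That sketch is correct and is essentially the argument found in the cited references.
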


To understand the behavior of $K_n$ near the boundary points $\pm \ell_\pm$, we introduce the complex Airy operator on $\R_+$. For $\a \in \R \setminus \set 0$ we consider on $L^2(\R_+)$ the operator defined by 
\[
A_\a^+ u = -\partial_{yy} u + i \a y u,
\]
on the domain 
\[
\Dom(A_\a^+) = \set{u \in L^2(\R_+) \st  (-u'' + i \a y u) \in L^2(\R_+) \text{ and } u(0) = 0}.
\]
To prove the following proposition, we use in $L^2(\R_+)$ the same dilation $\Th_a$ as above and we apply \cite[Lemma 5.1]{Helffer11}. For the properties of the Airy function we refer for instance to \cite{ValleeSoares}.

\begin{proposition} 
\begin{enumerate}[\rm(i)]
\item Let $\a > 0$. The spectrum of $A_\a^+$ consists of a sequence of simple eigenvalues. These eigenvalues are given by 
\[
\l_k^+ = \a^{\frac 23} e^{\frac {i\pi}3} \abs {\m_k}, \quad k \in \N^*,
\]
where $\dots < \mu_k < \dots < \mu_2 < \mu_1 < 0$ are the zeros of the Airy function.
\item Let $\gamma < \frac {\abs {\mu_1}} 2$. There exists $C > 0$ such that for all $\a \in \R \setminus \set 0$ we have
\[
\sup_{\Re(z) \leq \g \abs \a^{\frac 2 3}} \nr{(A_\a^+ -z)\inv} \leq \frac c {\abs \a^{\frac 23}}.
\]
\end{enumerate}\label{prop-Airy-R+}
\end{proposition}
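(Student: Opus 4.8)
The plan is to reduce both assertions to the model operator $A_1^+$, the two reductions being exactly the ones already used for $A_\a$. For $\a > 0$ the dilation $\Th_\a$ maps $L^2(\R_+)$ onto itself and preserves the Dirichlet condition at $0$, so that $\Th_\a\inv (A_\a^+ - z)\inv \Th_\a = \a^{-\frac 23}\big(A_1^+ - z\a^{-\frac 23}\big)\inv$; consequently $\Sp(A_\a^+) = \a^{\frac 23}\,\Sp(A_1^+)$, and a resolvent bound for $A_1^+$ on a half-plane $\set{\Re z \le \g}$ transfers to the bound $\nr{(A_\a^+ - z)\inv} \le c\,\a^{-\frac 23}$ on $\set{\Re z \le \g\,\a^{\frac 23}}$. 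An integration by parts moreover gives $(A_\a^+)^* = A_{-\a}^+$, so the case $\a < 0$ reduces to the case $\a > 0$ by taking adjoints (recall $\Re z = \Re \bar z$). It is thus enough to study $A_1^+$.

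To identify $\Sp(A_1^+)$, observe that the eigenvalue equation $-u'' + iyu = zu$ is the Airy equation $w'' = \zeta w$ read in the variable $\zeta = e^{\frac{i\pi}6}(y + iz)$. Since $\arg\zeta \to \frac\pi6 \in \,]-\pi/3,\pi/3[\,$ as $y \to +\infty$, the only solution lying in $L^2$ near $+\infty$ (up to a multiplicative constant) is $y \mapsto \mathrm{Ai}\big(e^{\frac{i\pi}6}(y + iz)\big)$. The Dirichlet condition $u(0)=0$ then reads $\mathrm{Ai}(e^{\frac{2i\pi}3}z) = 0$, i.e. $e^{\frac{2i\pi}3}z = \mu_k$ for some $k$, i.e. $z = e^{\frac{i\pi}3}\abs{\mu_k}$; hence $\Sp(A_1^+) = \set{e^{\frac{i\pi}3}\abs{\mu_k} : k \in \N^*}$. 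Each eigenvalue is geometrically simple, the solution space being one-dimensional, and also algebraically simple: forming the Green's function of $A_1^+ - z$ from the decaying solution above together with the solution vanishing at $0$, one sees that the resolvent is meromorphic with poles exactly at the $e^{\frac{i\pi}3}\abs{\mu_k}$, and that these poles are simple with rank-one residues, because the relevant Wronskian is, up to a nonzero factor, $\mathrm{Ai}(e^{\frac{2i\pi}3}z)$, whose zeros are simple. Finally $\Re\big(e^{\frac{i\pi}3}\abs{\mu_k}\big) = \abs{\mu_k}/2 \ge \abs{\mu_1}/2$, so for $\g < \abs{\mu_1}/2$ the half-plane $\set{\Re z \le \g}$ lies in the resolvent set of $A_1^+$.

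It remains to bound the resolvent there. The numerical range of $A_1^+$ is contained in the closed first quadrant, since $\Re \innp{A_1^+ u}{u} = \nr{u'}_{L^2(\R_+)}^2 \ge 0$ and $\Im \innp{A_1^+ u}{u} = \int_0^{+\infty} y\abs{u(y)}^2 \, dy \ge 0$; hence $\nr{(A_1^+ - z)\inv} \le 1$ whenever $\Re z \le -1$ or $\Im z \le -1$. On the remaining set $\set{-1 \le \Re z \le \g} \cap \set{\Im z \ge -1}$ the resolvent is holomorphic, hence locally bounded, and the only point left is its behaviour as $\Im z \to +\infty$; this is precisely what \cite[Lemma 5.1]{Helffer11} provides, the idea being that translating $y \mapsto y + \Im z$ pushes the turning point of the Airy equation to $+\infty$, so that $A_1^+$ is compared with the whole-line operator $A_1$, whose resolvent is uniformly bounded on $\set{\Re z \le \g}$ by Proposition \ref{prop-Airy-R}. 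Undoing the dilation then yields the estimate for every $\a \neq 0$. The step I expect to be the real obstacle is exactly this last one: in the regime where $\Re z$ stays bounded while $\Im z \to +\infty$ the numerical range carries no information, and one genuinely needs the fine structure of the Airy equation to replace the half-line problem by its whole-line counterpart.
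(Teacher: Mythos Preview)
Your proposal is correct and follows exactly the route the paper indicates: reduce to $A_1^+$ via the dilation $\Th_\a$ (and take adjoints for $\a<0$), identify the spectrum from the zeros of the Airy function, and control the resolvent on the critical strip by combining the numerical-range bound in the trivial region with \cite[Lemma 5.1]{Helffer11} for $\Im z \to +\infty$. The paper itself gives no detailed proof, only the one-line pointer to $\Th_\a$ and \cite[Lemma 5.1]{Helffer11}, so you have simply filled in the intended argument.
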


Of course, we have similar properties on $L^2(\R_-)$ for the operator 
\[
A_\a^- : u \mapsto  -\partial_{yy} u + i \a y u,
\]
defined on the domain 
\[
\Dom(A_\a^-) = \set{u \in L^2(\R_-) \st  (-u'' + i \a y u) \in L^2(\R_-) \text{ and } u(0) = 0}.
\]

\subsection{Resolvent estimates}\label{sec-Henry}

In this paragraph, we prove Proposition \ref{prop-decay-expKn} (see Proposition \ref{prop-exp-Kn} below) and the first part of Proposition \ref{prop-ln-psin}, about the eigenvalue $\l_n$ (see Proposition \ref{prop-lambda-n}). The estimate of a corresponding eigenfunction at the boundary will be given in the next paragraph.\\

We prove estimates for the resolvent $(K_n-z)\inv$ when $z$ has real part smaller than $\gamma \sqrt n$, with $\gamma$ as in Proposition \ref{prop-decay-expKn}. More precisely, we estimate the difference between $(K_n-z)\inv$ and the model resolvent $(H_n-z)\inv$, in a suitable sense. By the theory of semigroups, this will give Proposition \ref{prop-decay-expKn}. This will also give the existence of an eigenvalue $\l_n$ which satisfies \eqref{eq-lambda-n}.\\

To compare $(K_n-z)\inv$ and $(H_n-z)\inv$, we follow the ideas of \cite{henry}. Our one-dimensional setting is simpler than the general case considered therein so, for the reader convenience, we provide a complete proof adapted to our problem. Notice also that \eqref{eq-lambda-n} is not contained in the results given in \cite{henry}, where the imaginary parts of the eigenvalues are not an issue.\\

We denote by $\1_I$ the operator which maps $u \in L^2(\R)$ to its restriction on $I$: $\1_I u = u|_{I} \in L^2(I)$. Then $\1_I^*$ maps a function $v \in L^2(I)$ to its extansion by 0 on $\R$.

\begin{proposition}
\begin{enumerate}[\rm(i)]
\item Let $\gamma \in \big]0,\frac {q'(0)}{\sqrt 2}\big[$. There exist $n_0 \in \N^*$ and $c > 0$ such that for $n \geq n_0$ and $z \in \C$ with $\Re(z) \leq \gamma \sqrt n$ we have $z \in \rho(K_n)$ and 
\[
\nr{(K_n-z)\inv}_{\Lc(L^2(I))} \leq \frac {c}{\sqrt n}.
\]
\item We have 
\[
\nr{\1_I^* K_n\inv \1_I - H_n\inv }_{\Lc(L^2(\R))} = \littleo n {+\infty} \left( \frac 1 {\sqrt n} \right).
\]
\end{enumerate} 
\label{prop-spectre-Kn}
\end{proposition}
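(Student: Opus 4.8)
The plan is to establish (i) and (ii) simultaneously by constructing an approximate inverse of $(K_n - z)$ out of the model resolvents $(H_n - z)^{-1}$ and the Airy resolvents of Propositions \ref{prop-Airy-R} and \ref{prop-Airy-R+}, glued with a partition of unity whose scales depend on $n$. First I would fix $\gamma \in \big]0,\frac{q'(0)}{\sqrt 2}\big[$, pick $\gamma < \gamma' < \frac{q'(0)}{\sqrt 2}$, and choose a small $\rho > 0$ so that on $[-\rho,\rho]$ the potential $q(y)^2$ is close to $q'(0)^2 y^2$ in $C^1$. Introduce smooth cutoffs $\chi_0$ supported in $]-2\rho,2\rho[$ and equal to $1$ on $[-\rho,\rho]$, cutoffs $\chi_{\pm}$ localizing near the boundary points $\ell_+$, $-\ell_-$, and interior cutoffs $\chi_j$ covering $I \setminus \{0\}$ away from the boundary, all forming a partition of unity subordinate to a covering of $\bar I$. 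Near $0$ the relevant model is $H_n$; near an interior point $y_0 \neq 0$ it is the Airy operator $A_{\alpha_{y_0}}$ with $\alpha_{y_0} = 2 n\, q(y_0) q'(y_0)$, which has scale $|\alpha|^{2/3} \sim n^{2/3}$; near the boundary points it is $A_{\alpha}^{\pm}$ with the Dirichlet condition. For $\Re(z) \le \gamma \sqrt n$ the model resolvents are all bounded by $c n^{-1/2}$ (and in fact much smaller, $O(n^{-2/3})$, away from $0$, since there $\Re z \le \gamma\sqrt n \ll \gamma |\alpha|^{2/3}$), using Proposition \ref{prop-Airy-R}, Proposition \ref{prop-Airy-R+} and \eqref{estim-Hn-resolvent}.

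The parametrix is $R_n(z) = \sum_j \zeta_j\, M_j(z)\, \chi_j$, where the $\zeta_j$ are slightly fattened cutoffs equal to $1$ on $\supp \chi_j$ and $M_j(z)$ is the relevant model resolvent (with the identifications $\1_I$, $\1_I^*$ where a model lives on $\R$ or $\R_\pm$). Computing $(K_n - z) R_n(z)$, the leading term is $\sum_j \zeta_j (K_n - z) M_j \chi_j$; on $\supp \chi_j$ the operator $K_n$ differs from the $j$-th model operator by a term of the form $O(n \cdot (\text{localization error})^{?})$ coming from the Taylor remainder of $q(y)^2$, while the commutator $[-\partial_{yy}, \zeta_j] M_j$ produces $\zeta_j'$ and $\zeta_j''$ times $M_j$ and $\partial_y M_j$. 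The key is that $\zeta_j' $ is supported where $\chi_j$ is already cut off, so these terms are genuinely small: near $0$ one uses the Agmon/Gaussian decay built into \eqref{eq-eigenfunction-Hn} (more precisely, the resolvent estimate plus the fact that the commutator lands in a region where $|y| \sim \rho$ and the model resolvent composed with $\chi_0'$ is exponentially small in $n$), and near the interior/boundary points one uses that $\partial_y (A_\alpha - z)^{-1}$ and $(A_\alpha - z)^{-1}$ are $O(|\alpha|^{-1/3})$ and $O(|\alpha|^{-2/3})$ respectively together with the scale separation. The upshot is $(K_n - z) R_n(z) = \Id + E_n(z)$ with $\nr{E_n(z)}_{\Lc(L^2(I))} \to 0$ uniformly in $z$ with $\Re(z) \le \gamma\sqrt n$; hence $(K_n - z)$ is invertible for $n \ge n_0$, $(K_n-z)^{-1} = R_n(z)(\Id + E_n(z))^{-1}$, and $\nr{(K_n-z)^{-1}} \le 2 \nr{R_n(z)} \le c n^{-1/2}$, which is (i).

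For (ii), I would take $z = 0$ (legitimate once $n \ge n_0$, since $0$ has real part $\le \gamma\sqrt n$) and compare $\1_I^* K_n^{-1} \1_I$ with $H_n^{-1}$ directly. Write $\1_I^* K_n^{-1}\1_I - H_n^{-1} = \1_I^* K_n^{-1}\1_I (H_n - \1_I^*\text{-model of }K_n) H_n^{-1}$ after inserting the resolvent identity against the operator $H_n$ acting through the localization $\chi_0$; the difference of the two operators, sandwiched between $H_n^{-1}$ on the right and $K_n^{-1}$ on the left, involves $(i n q(y)^2 - i n q'(0)^2 y^2)$ localized near $0$ plus the mismatch from the cutoffs. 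On the support of $\chi_0$ the potential difference is $O(n |y|^3)$, and $y^3 H_n^{-1}$ is $O(n^{-3/4} \cdot n^{-1/2}) = O(n^{-5/4})$ by the explicit eigenfunctions \eqref{eq-eigenfunction-Hn} (the natural length scale being $\alpha^{-1} \sim n^{-1/4}$, so that $\nr{y^3 H_n^{-1} u} \lesssim n^{-3/4}\nr{u}$ after rescaling), hence $n \cdot n^{-5/4} = n^{-1/4} = o(1)$; combined with $\nr{K_n^{-1}} = O(n^{-1/2})$ this beats $n^{-1/2}$. The terms away from $0$ are controlled because $H_n^{-1}$ (equivalently its eigenfunctions) is exponentially small there, while $K_n^{-1}$ is merely $O(n^{-1/2})$, so the product is $o(n^{-1/2})$.

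The main obstacle is the bookkeeping of the $n$-dependent scales: near $0$ the model has scale $n^{-1/4}$ (harmonic oscillator) whereas at interior and boundary points it has scale $n^{-1/3}$ (Airy), so the cutoffs $\chi_0$ and $\chi_j$ and their derivatives must be chosen to interpolate correctly, and one must check that the commutator and Taylor-remainder errors are genuinely $o(1)$ in operator norm and not merely $O(1)$ — this is where the exponential localization of the harmonic oscillator eigenfunctions (and the corresponding decay of $(H_n - z)^{-1}$ away from $0$, which can be obtained from \eqref{estim-Hn-resolvent} plus an Agmon estimate as in Paragraph \ref{sec-Agmon}) does the decisive work. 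Once the parametrix is shown to have small remainder, everything else is soft functional analysis.
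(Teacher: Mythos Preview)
Your overall architecture---build an approximate inverse of $K_n-z$ by gluing the harmonic-oscillator resolvent near $0$ to Airy resolvents away from $0$ and near the boundary, then invert a Neumann series---is exactly the paper's strategy. The execution, however, diverges on one structural point.

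You propose \emph{fixed} cutoffs (a small $\rho>0$ chosen once) and rely on exponential localization of $(H_n-z)^{-1}$ away from $0$ (and analogous decay for the Airy pieces) to kill the commutator and Taylor-remainder errors. The paper instead takes $n$-\emph{dependent} cutoffs: near $0$ it uses $\chi_n(y)=\chi(n^\rho y)$ with $\rho\in\,]1/6,1/4[$, and away from $0$ it covers $I$ by $\nu_n\sim n^{\tilde\rho}$ intervals of length $\sim n^{-\tilde\rho}$ (with $\tilde\rho\in\,](1+2\rho)/6,(1-\rho)/3[$), each carrying its own Airy model. With these shrinking scales the remainders are controlled by \emph{elementary} operator-norm bounds: for instance $|r(y)\chi_n(y)|\lesssim n^{-3\rho}$ gives $\|nR_n(z)r\chi_n\|\lesssim n^{1-3\rho-1/2}\to 0$, and similarly for the Airy pieces; the growing number of terms is handled by almost orthogonality. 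The paper also builds a \emph{left} parametrix $Q_n(z)(K_n-z)=1+o(1)$ rather than a right one.

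Your route is not wrong, but it trades the paper's explicit scale arithmetic for analytic input you have not supplied: you would need genuine Agmon-type decay for the \emph{resolvents} $(H_n-z)^{-1}$ and $(A_\alpha-z)^{-1}$ (uniformly in $z$ with $\Re z\le\gamma\sqrt n$), not just for eigenfunctions; the estimates in Paragraph~\ref{sec-Agmon} are for $K_n$, and adapting them to the model operators is extra work. Likewise your moment estimate $\|y^3 H_n^{-1}\|\lesssim n^{-5/4}$ is plausible from scaling but needs a proof (one quadratic-form identity only gives $\|y(H_n-z)^{-1}\|\lesssim n^{-3/4}$). The paper's choice of shrinking cutoffs avoids all of this at the cost of the exponent bookkeeping you flagged as ``the main obstacle''---which it then carries out in full.
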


\begin{proof} 
The proof consists in using localized versions of the resolvents of the complex harmonic operator $H_n$ and of Airy-type operators to construct an approximation $Q_n(z)$ of the resolvent $(K_n-z)\inv$. We first introduce suitable cut-off functions, then we define $Q_n(z)$ and finally we check that it is indeed an approximation
of $(K_n -z)^{-1}$ up to a uniformly bounded operator. The proposition will then follow from estimates on $Q_n(z)$.
For $n \in \N^*$ we set 
\[
\C_n^- = \set{z \in \C \st \Re(z) \leq \g \sqrt n}.
\]

\stepp 
For $n \in \N^*$ and $z \in \C_n^-$ we set 
\[
R_n(z) = \1_I (H_n-z)\inv \1_I^*.
\]
This defines a bounded operator on $L^2(I)$. Our purpose is to prove that $R_n(z)$ gives an approximate inverse of $(K_n-z)$ near 0, in the following sense. We consider 
\[
\rho \in \left]\frac 16,\frac 14 \right[,
\]
a cut-off function $\h \in C_0^\infty(\R,[0,1])$ supported in $I$ and equal to 1 on a neighborhood of 0, and for $n \in \N^*$ and $y \in \bar I$ we set 
\[
\h_n(y) = \h(n^\rho y).
\]
Then we set 
\[
T_n(z) = R_n(z) \chi_n (K_n-z) - \chi_n.
\]
We prove that $T_n(z)$ extends to a bounded operator on $L^2(I)$ and 
\begin{equation} \label{estim-res-Rn}
\nr{T_n(z)}_{\Lc(L^2(I))} \limt n {+\infty} 0,
\end{equation}
where the convergence is uniform with respect to $z \in \C_n^-$.

Let $u \in \Dom(K_n)$. For $n \in \N^*$ we have $\chi_n u \in \Dom(K_n)$ and $\1_I^* \chi_n u \in \Dom(H_n)$. For $y \in \bar I$ we set 
\[
r(y) = q(y)^2 - q'(0)^2 y^2.
\]
Then for $z \in \C_n^-$ we have 
\[
R_n(z) (K_n-z) \chi_n u = \chi_n u + i n R_n(z) r \chi_n u.
\]
This gives 
\begin{equation} \label{eq-K-Rn}
R_n(z) \chi_n (K_n-z) u = \chi_n u + i n R_n(z) r \chi_n - R_n(z) \chi_n'' u + 2 R_n(z) (\chi_n' u)'.
\end{equation}
Since $\abs{r(y) \h_n(y)} \lesssim n^{-3\rho}$, we have by \eqref{estim-Hn-resolvent}
\begin{equation*}
\nr{n   R_n(z) r \h_n}_{\Lc(L^2(I))} \lesssim n^{1 - 3\rho - \frac 12} \limt n {+\infty} 0.
\end{equation*}
We also have 
\[
\nr{R_n(z) \chi_n''}_{\Lc(L^2(I))} \lesssim n^{2\rho - \frac 12} \limt n {+\infty} 0.
\]
For the last term we observe that for $v \in L^2(\R)$ we have
\begin{eqnarray*}
\lefteqn{\nr{\partial_y (H_n^*-\bar z)\inv v}_{L^2(\R)}^2}\\
&& = \Re \innp{(H_n^* - \bar z) (H_n^*-\bar z)\inv v}{(H_n^*-\bar z)\inv v}_{L^2(\R)} + \Re(z) \nr{(H_n^*-\bar z)\inv v}_{L^2(\R)}^2\\
&& \lesssim \frac {\nr{v}_{L^2(\R)}^2}{\sqrt n}.
\end{eqnarray*}
Taking the adjoint gives 
\begin{multline*}
\nr{R_n(z) \partial_y (\chi_n' u)}_{L^2(I)} \leq \nr{(H_n-z)\inv \partial_y (\1_I^* \h_n' u)}_{L^2(\R)}\\
\lesssim n^{-\frac 14} \nr{\1_I^* \chi_n'u}_{L^2(\R)} \lesssim n^{\rho-\frac 14} \nr{u}_{L^2(I)},
\end{multline*}
and \eqref{estim-res-Rn} follows.

\stepp 
Then we consider
\[
\tilde \rho \in \left] \frac {1+2\rho} 6, \frac {1-\rho} 3 \right[.
\]
In particular, $\tilde \rho > \rho$. For $n \in \N^*$ we denote by $\nu_n$ the integer part of $1+ (\ell_+ + \ell_-) n^{\tilde \rho}$, and for $j \in \Ii 0 {\nu_n}$ we set 
\[
a_{j,n} = -\ell_- + j \d_n, \quad \d_n =  \frac {\ell_+ + \ell_-}{\nu_n}.
\]
We also consider $\theta \in C_0^\infty(\R)$ supported in $\big]-\frac 23,\frac 23\big[$, equal to 1 on $\big[-\frac 13,\frac 13\big]$ and such that $\th(-y) = 1-\th(1-y)$ for $y \in [0,1]$. Then for all $y \in \R$ we have 
\[
\sum_{m \in \Z} \theta(y-m) = 1.
\]
For $n \in \N^*$, $j \in \Ii 1 {\nu_n}$ and $y \in \bar I$ we set 
\[
\theta_{j,n}(y) = \theta \left( \frac {y-a_{j,n}}{\d_n} \right) (1-\chi_n)(y).
\]
Let $A_{j,n}$ be defined by
\[
A_{j,n} u = -\partial_{yy} u + in q(a_{j,n})^2 + 2 i n q(a_{j,n}) q'(a_{j,n}) (y-a_{j,n}) u
\]
on the domain 
\[
\Dom(A_{j,n}) = \set{u \in H^2(\R) \st y u \in L^2(\R)}.
\]
With the notation \eqref{def-Aa} we have 
\[
A_{j,n} = \tau_{a_{j,n}} A_{2n(qq')(a_{j,n})} \tau_{-a_{j,n}} + i n q(a_{j,n})^2,
\]
where $\tau_{\pm a_{j,n}}$ is the usual translation operator: $(\tau_{\pm a_{j,n}} u)(y) = u(y \mp a_{j,n})$. Thus $A_{j,n}$ satisfies the properties of Proposition \ref{prop-Airy-R} with $\a = 2n (qq')(a_{j,n})$. We similarly set
\[
A_{0,n} = \tau_{-\ell_-} A_{2n(qq')(-\ell_-)}^+ \tau_{\ell_-}  + i n q(-\ell_-)^2
\]
and 
\[
A_{\nu_n,n} = \tau_{\ell_+} A_{2n(qq')(\ell_+)}^- \tau_{-\ell_+} + i n q(\ell_+)^2.
\]
Notice that $A_{0,n}$ is an operator on $L^2(-\ell_-,+\infty)$ and $A_{\nu_n,n}$ is an operator on $L^2(-\infty,\ell_+)$. They satisfy the same properties as the model operators (see Proposition \ref{prop-Airy-R+}). 

For $j \in \Ii 1 {\nu_n-1}$ we set $\1_j = \1_I$. We also denote by $\1_0$ the operator which maps $u \in L^2(-\ell_-,+\infty)$ to its restriction on $I$, and by $\1_{\nu_n}$ the operator which maps $u \in L^2(-\infty,\ell_+)$ to its restriction to $I$. For $n \in \N^*$, $z \in \C_n^-$ and $j \in \Ii 0 {\nu_n}$ we set 
\[
R_{j,n}(z) = \1_j \big( A_{j,n}-z \big)\inv \1_j^*
\]
and 
\[
T_{j,n}(z) = R_{j,n}(z) \theta_{j,n} (K_n-z) - \theta_{j,n}.
\]

We proceed as above. For $j \in \Ii 0 {\nu_n}$ we have 
\begin{equation*}
R_{j,n}(z) \theta_{j,n} (K_n-z)u = \theta_{j,n} u+ i n R_{j,n}(z) \theta_{j,n} r_{j,n} u - R_{j,n}(z) \th_{j,n}'' u + 2 R_{j,n}(z) \partial_y (\th_{j,n}'u)
\end{equation*}
where
\[
r_{j,n}(y) = q^2(y) - q^2(a_{j,n}) - 2\, (y-a_{j,n}) q'(a_{j,n}) q(a_{j,n}).
\]
Let $n \in \N^*$ and $j \in \Ii 0 {\nu_n}$ be such that $\theta_{j,n} \neq 0$. Then $\abs{a_{j,n}} \gtrsim n^{-\rho}$, $2n q'(a_{j,n}) q(a_{j,n})\gtrsim n^{1-\rho}$ and hence, for $z \in \C_n^-$ (in particular $\Re(z) \leq \g n^{\frac 12} \ll n^{\frac 23(1-\rho)}$), Proposition \ref{prop-Airy-R+} gives
\begin{equation} \label{estim-res-Ajn}
\nr{R_{j,n}(z)}_{\Lc(L^2(I))} \lesssim n^{-\frac 23(1-\rho)}.
\end{equation}
Then, as above we have  $\abs{\tilde r_{j,n}(y) \theta_{j,n}(y)} \lesssim n^{-2\tilde \rho}$ so 
\[
n \nr{ R_{j,n}(z) \tilde r_{j,n} \theta_{j,n}}_{\Lc(L^2(I))} \lesssim n^{1-2\tilde \rho - \frac 23(1-\rho)} \limt n {+\infty} 0.
\]
Moreover,
\begin{eqnarray*}
\nr{R_{j,n}(z) \th_{j,n}'' u }_{L^2(I)} &\lesssim& n^{2\tilde \rho - \frac 23 (1-\rho)} \nr{u}_{L^2(I)},\\
\nr{R_{j,n}(z) (\th'_{j,n} u)'}_{\Lc(L^2(I))} &\lesssim& n^{\tilde \rho - \frac 13(1-\rho)} \nr{u}_{L^2(I)}.
\end{eqnarray*}
All these estimates being uniform with respect to $j \in \Ii 0 {\nu_n}$, we finally get
\begin{equation} \label{estim-res-Rjn}
\sup_{z \in \C_n^-} \sup_{0\leq j \leq \nu_n} \nr{T_{j,n}(z)}_{\Lc(L^2(I))} \limt n {+\infty} 0.
\end{equation}

\stepp
For $u \in L^2(I)$ we write
\[
u = \h_n u + \sum_{j=0}^{\nu_n} \th_{j,n} u,
\]
We want to sum \eqref{estim-res-Rn} and the estimates \eqref{estim-res-Rjn}, for $j \in \Ii 0 {\nu_n}$, to get an approximate inverse for $(K_n-z)$. We have seen that each contribution goes to 0, but the number of terms grows with $n$.

Let $\tilde \th \in C_0^\infty(\R,[0,1])$ be equal to 1 on $\big[-\frac 23,\frac 23\big]$ and supported in $]-1,1[$. Then for $n \in \N^*$, $j \in \Ii 1 {\nu_n}$ and $y \in \bar I$ we set 
\[
\tilde \th_{j,n}(y) = \tilde \th \left( \frac {y-a_{j,n}}{\d_n} \right),
\]
and then 
\begin{equation} \label{eq_def_Qn}
Q_n(z) = R_n(z) \h_n + \sum_{j=0}^{\nu_n} \tilde \theta_{j,n}  R_{j,n}(z) \theta_{j,n}.
\end{equation}
For $u \in \Dom(K_n)$ and $z \in \C_n^-$ we have $\th_{j,n} u = \th_{j,n} \tilde \th_{j,n} u$ and
\[
\theta_{j,n} (K_n-z) (1- \tilde \theta_{j,n}) u = 0,
\]
so
\[
\nr{Q_n(z) (K_n-z) u - u }_{L^2(I)} \leq \nr{T_n(z) u}_{L^2(I)} + \nr{\sum_{j=0}^{\nu_n} \tilde \th_{j,n} T_{j,n}(z) \tilde \th_{j,n} u}_{L^2(I)}.
\]
Moreover $\tilde \th_{j,n} \tilde \th_{k,n} = 0$ whenever $\abs{j-k} \geq 2$, so by almost orthogonaly (twice) we can write
\begin{align*}
\nr{\sum_{j=0}^{\nu_n} \tilde \th_{j,n} T_{j,n}(z) \tilde \th_{j,n} u}^2_{L^2(I)} 
& \lesssim \sum_{j=0}^{\nu_n} \nr{\tilde \th_{j,n} T_{j,n}(z) \tilde \th_{j,n} u}^2_{L^2(I)}\\
& \lesssim \sup_{0\leq j \leq \nu_n} \nr{T_{j,n}(z)}_{\Lc(L^2(I))}^2 \sum_{j=0}^{\nu_n} \nr{\tilde \th_{j,n} u}^2_{L^2(I)}\\
& \lesssim \sup_{0\leq j \leq \nu_n} \nr{T_{j,n}(z)}_{\Lc(L^2(I))}^2 \nr{u}^2_{L^2(I)}.
\end{align*}
This proves 
\begin{equation} \label{eq-Qn-Kn-z}
\sup_{z \in \C_n^-} \sup_{\substack{u \in \Dom(K_n)\\ \nr{u}_{L^2(I)} = 1}} \nr{Q_n(z) (K_n-z) u - u}_{L^2(I)} \limt n {+\infty} 0.
\end{equation}
Thus for $n$ large enough the operator $K_n$ has no eigenvalue and hence no spectrum in $\C_n^-$. Moreover for $z \in \C_n^-$ we have
\begin{equation} \label{eq-Qngoodapprox}
(K_n-z)\inv = B_n(z) Q_n(z),
\end{equation}
where
\[
B_n(z) = \big(1 + \big( Q_n(z) (K_n-z) - 1 \big) \big)\inv 
\]
is bounded on $L^2(I)$ uniformly in $z \in \C_n^-$ and $n$ large enough.

\stepp Let $u \in L^2(I)$ and $z \in \mathbb{C}_n^-$. By \eqref{estim-res-Ajn}, and using again the almost orthogonality, we obtain 
\[
\nr{B_n(z) \sum_{j=0}^{\nu_n} \tilde \theta_{j,n} R_{j,n}(z) \theta_{j,n}  u}^2_{L^2(I)} \lesssim \frac {\nr{u}^2_{L^2(I)}}{n ^{\frac 43(1-\rho)}}, 
\]
so 
\begin{equation} \label{eq-res-Kn-Rn}
\nr{(K_n-z)\inv - B_n(z) R_n(z) \h_n }_{\Lc(L^2(I))} \lesssim \frac 1 {n^{\frac 23(1-\rho)}}.
\end{equation}
With \eqref{estim-Hn-resolvent}, this gives the first statement of the proposition.

\stepp We now consider the case $z = 0$ to prove the second part of the proposition. By \eqref{eq-Qn-Kn-z} we have
\[
\nr{B_n(0) - 1}_{\Lc(L^2(I))} = \nr{ \big( 1 + (Q_n(0) K_n-1) \big)\inv - 1}_{\Lc(L^2(I))} \limt n {+\infty} 0,
\]
so \eqref{eq-res-Kn-Rn} and \eqref{estim-Hn-resolvent} give 
\begin{equation} \label{eq-Kn-Rn-0}
\nr{K_n\inv - \1_I H_n\inv \1_I^* \h_n}_{\Lc(L^2(I))} = \littleo n {+\infty} \left( \frac 1 {\sqrt n} \right).
\end{equation}
On $\supp(1-\chi_n)$ we have $\abs y \gtrsim n^{-\rho}$, so for $u \in L^2(I)$ we can write 
\begin{align*}
\nr{(1-\chi_n) \1_I (H_n^*)\inv \1_I^* u}^2_{L^2(I)}
& \lesssim n^{2\rho} \nr{y (H_n^*)\inv \1_I^*u}^2_{L^2(\R)} \\
& \lesssim n^{2\rho - 1} \abs{\Im \innp{(H_n^* (H_n^*)\inv \1_I^* u}{(H_n^*)\inv \1_I^* u}_{L^2(\R)}} \\
& \lesssim \frac {\nr u^2_{L^2(I)}}{n^{\frac 32 - 2\rho}}.
\end{align*}
Taking the adjoint gives
\[
\nr{\1_I H_n\inv \1_I^* (1-\chi_n)}_{\Lc(L^2(I))} = \littleo n {+\infty} \left( \frac 1 {\sqrt n} \right).
\]
With \eqref{eq-Kn-Rn-0}, the proof is complete.
\end{proof}

Now we are in position to prove Proposition \ref{prop-decay-expKn}. It is a direct consequence of the following result.

\begin{proposition} \label{prop-exp-Kn}
Let $\gamma < \frac {q'(0)}{\sqrt 2}$. There exist $n_0 \in \N^*$ and $C > 0$ such that for $n \geq n_0$ and $t \geq 0$ we have 
\[
\nr{e^{-tK_n}}_{\Lc(L^2(I))} \leq C e^{-t\gamma \sqrt n}.
\]
\end{proposition}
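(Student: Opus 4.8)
The plan is to derive the exponential decay of the semigroup $e^{-tK_n}$ from a uniform resolvent bound in a half-plane $\C_n^-=\{\Re z\le\gamma\sqrt n\}$ via the standard Gearhart--Prüss / Hille--Yosida type argument (cf. \cite{engel}). More precisely, I would first observe that Proposition \ref{prop-spectre-Kn}(i) already provides, for any $\gamma\in\big]0,\frac{q'(0)}{\sqrt2}\big[$, an $n_0$ and a constant $c>0$ such that for $n\ge n_0$ the half-plane $\C_n^-$ lies in the resolvent set of $K_n$ and $\nr{(K_n-z)^{-1}}_{\Lc(L^2(I))}\le c/\sqrt n$ uniformly for $\Re z\le\gamma\sqrt n$. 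The passage from such a uniform resolvent estimate on a vertical line (or half-plane) to an exponential bound on the semigroup is classical for $C_0$-semigroups: writing the resolvent bound on the line $\Re z=\gamma\sqrt n$ and using that $K_n$ generates a contraction semigroup, one gets $\nr{e^{-tK_n}}\le Ce^{-t\gamma\sqrt n}$.

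The cleanest way to organize the argument is via a translated operator. Fix $\gamma<\frac{q'(0)}{\sqrt2}$, pick $\gamma'\in\big]\gamma,\frac{q'(0)}{\sqrt2}\big[$, apply Proposition \ref{prop-spectre-Kn}(i) with $\gamma'$ to obtain $n_0$ and $c$ with $\nr{(K_n-z)^{-1}}\le c/\sqrt n$ for $\Re z\le\gamma'\sqrt n$, $n\ge n_0$. Set $L_n=K_n-\gamma\sqrt n$. Then $-L_n$ generates the semigroup $e^{\gamma\sqrt n\,t}e^{-tK_n}$, and since $K_n$ is maximal accretive this semigroup has growth bound at most $\gamma\sqrt n$ a priori; more importantly, the resolvent $(L_n-w)^{-1}=(K_n-(w+\gamma\sqrt n))^{-1}$ is uniformly bounded by $c/\sqrt n$ on the half-plane $\Re w\le(\gamma'-\gamma)\sqrt n$, in particular on $\Re w\le 0$ and a bit beyond. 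One then invokes the quantitative Gearhart--Greiner--Prüss theorem (or, in this autonomous setting where we do not even need Hilbert-space spectral mapping in full strength, a direct Laplace-transform inversion / Cauchy integral representation of $e^{-tL_n}$ along a contour just to the left of the imaginary axis, using the resolvent decay): a uniform bound $M$ on $\nr{(L_n-w)^{-1}}$ for $\Re w\ge-\delta$ forces $\nr{e^{-tL_n}}\le C_M$ for all $t\ge0$, with $C_M$ depending only on $M$ (hence on $c$, not on $n$, since here $M\sim c/\sqrt n\le c$). Translating back, $\nr{e^{-tK_n}}=e^{-\gamma\sqrt n\,t}\nr{e^{-tL_n}}\le Ce^{-t\gamma\sqrt n}$, with $C$ independent of $n\ge n_0$. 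For the finitely many $n<n_0$ there is nothing to prove at the level of this proposition (they are handled separately / absorbed, or one simply notes Proposition \ref{prop-exp-Kn} only claims the bound for $n\ge n_0$).

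The one genuine subtlety — and the step I would be most careful about — is ensuring that the constant $C$ in the final bound does \emph{not} deteriorate as $n\to\infty$. This is exactly why it is crucial that the resolvent estimate from Proposition \ref{prop-spectre-Kn} is of the form $c/\sqrt n$ (an \emph{improving} bound, in particular uniformly bounded in $n$) rather than just $o(1)\cdot$something growing: the Gearhart--Prüss constant is a function of $\sup\nr{(L_n-w)^{-1}}$ over the relevant half-plane, and as long as that supremum stays bounded uniformly in $n$, so does $C$. A clean way to make this fully self-contained, avoiding any appeal to the precise form of the spectral mapping theorem, is to write, for $u\in\Dom(K_n)$ and $t>0$,
\[
e^{-tK_n}u=\frac{1}{2i\pi}\int_{\Re w=\gamma\sqrt n}e^{-tw}(K_n-w)^{-1}u\,dw
\]
(justified first for $u$ in a dense core and using the resolvent decay $c/\sqrt n$ together with an extra factor gained from $(K_n-w)^{-1}=\frac1{iw}\big(1+K_n(K_n-w)^{-1}\big)$ to make the integral absolutely convergent), and then bound $\nr{e^{-tK_n}u}\le \frac{1}{2\pi}e^{-\gamma\sqrt n\,t}\int_{\R}\big\|(K_n-\gamma\sqrt n-is)^{-1}\big\|\cdots\,ds$, where the remaining $s$-integral is controlled uniformly in $n$ by combining the $c/\sqrt n$ bound with the accretivity estimate $\nr{(K_n-w)^{-1}}\le 1/\abs{\Re w}$ for $\Re w<0$ (shift the line slightly and interpolate). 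I expect the write-up to be short once this contour/Gearhart--Prüss mechanism is invoked; essentially all the work has already been done in Proposition \ref{prop-spectre-Kn}.
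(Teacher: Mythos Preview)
Your proposal is correct and follows essentially the same route as the paper: translate by $\gamma\sqrt n$, feed the uniform resolvent bound from Proposition \ref{prop-spectre-Kn}(i) into a Gearhart--Pr\"uss type statement, and observe that the resulting constant depends only on the resolvent bound $c/\sqrt n$ (hence is uniform in $n$), then translate back. The paper does exactly this, citing \cite[Th.~V.1.11]{engel} and tracking the constants in that proof; your extra margin $\gamma'<q'(0)/\sqrt2$ is not needed there but does no harm.
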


\begin{proof}
Let $n_0 \in \N^*$ and $c > 0$ by given by Proposition \ref{prop-spectre-Kn}. For $n \geq n_0$ we set $\tilde K_n = -K_n + \g \sqrt n$. Then for $n \geq n_0$ and $z \in \C$ with $\Re(z) \geq  0$ we have $z \in \rho(\tilde K_n)$ and 
\[
\big\| (\tilde K_n - z)\inv \big\|_{\Lc(L^2(I))} \leq \frac {c}{\sqrt n}.
\]
Moreover for $t \geq 0$ we have 
\[
\big\|e^{t \tilde K_n} \big\| \leq e^{t\g \sqrt n}.
\]
Then we apply \cite[Th. V.1.11 p. 302]{engel} to the operator $\tilde K_n$. With the notation used in the proof therein, we have $\o_0 \leq \g \sqrt n$, $M\leq \frac c {\sqrt n}$ and $L= 2\pi$. We obtain that the semigroup $(e^{t\tilde K_n})_{t \geq 0}$ is bounded uniformly in $t \geq 0$ and $n \geq n_0$, so there exists $C > 0$ such that for all $n \geq n_0$ and $t \geq 0$ we have 
\[
\nr{e^{-tK_n}} = e^{-t\g \sqrt n} \big \| e^{t\tilde K_n} \big \| \leq C e^{-t\g \sqrt n}.
\]
We also refer to \cite{HelfferSjo10} to get bounds on a semigroup from bounds on the resolvent of the corresponding generator.
\end{proof}

Now we turn to the proof of \eqref{eq-lambda-n}. A more general version of the following result is given in \cite[\S IV.3.5]{kato}.

\begin{proposition} \label{prop-perturbation-vp-kato}
Let $T$ be a closed operator on a Hilbert space $\mathscr H$. Let $\l \in \C$. Assume that $\l$ is an isolated eigenvalue of $T$. Let $(B_m)_{m \in \N}$ be a sequence of bounded operators on $\mathscr H$ such that $\nr{B_m}_{\Lc(\Hc)} \to 0$ as $m \to +\infty$. For $m \in \N$ we set $T_m = T+B_m$. Let $\e > 0$. Then for $m$ large enough the operator $T_m$ has an eigenvalue $\l_m$ such that $\abs{\l_m-\l} \leq \e$.
\end{proposition}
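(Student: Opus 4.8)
The plan is to isolate the eigenvalue $\l$ by a small circle and use a standard Neumann-series / resolvent-continuity argument to show the spectral projection persists under the perturbation $B_m$, which forces $T_m$ to have spectrum inside the circle. Concretely, since $\l$ is an isolated point of the spectrum of the closed operator $T$, we may fix $r \in ]0,\e]$ small enough that the closed disc $\overline{D(\l,r)}$ meets $\sigma(T)$ only at $\l$. Then the circle $\gamma_r = \{z : \abs{z-\l} = r\}$ lies in the resolvent set $\rho(T)$, and since $\gamma_r$ is compact the quantity $M_r := \sup_{z \in \gamma_r} \nr{(T-z)\inv}_{\Lc(\Hc)}$ is finite. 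The first step is this setup.

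The second step is a perturbation-of-resolvent estimate. For $z \in \gamma_r$ we write $T_m - z = (T-z)\big(\Id + (T-z)\inv B_m\big)$. Since $\nr{(T-z)\inv B_m}_{\Lc(\Hc)} \leq M_r \nr{B_m}_{\Lc(\Hc)}$, and $\nr{B_m}_{\Lc(\Hc)} \to 0$, there exists $m_0$ such that for all $m \geq m_0$ and all $z \in \gamma_r$ this norm is $\leq \tfrac12$. Hence $\Id + (T-z)\inv B_m$ is boundedly invertible by the Neumann series, so $z \in \rho(T_m)$, and moreover
\[
(T_m - z)\inv = \big(\Id + (T-z)\inv B_m\big)\inv (T-z)\inv,
\qquad
\nr{(T_m-z)\inv}_{\Lc(\Hc)} \leq 2 M_r,
\]
with the bound uniform in $z \in \gamma_r$ and $m \geq m_0$.

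The third step deduces the existence of an eigenvalue. Consider the Riesz spectral projections
\[
P = \frac{1}{2i\pi} \oint_{\gamma_r} (z - T)\inv \, dz,
\qquad
P_m = \frac{1}{2i\pi} \oint_{\gamma_r} (z - T_m)\inv \, dz,
\]
which are well defined for $m \geq m_0$ since $\gamma_r \subset \rho(T) \cap \rho(T_m)$. Since $\l$ is an isolated eigenvalue, $P \neq 0$. From the identity $(z-T_m)\inv - (z-T)\inv = (z-T_m)\inv B_m (z-T)\inv$ and the uniform bounds of the second step we get $\nr{P_m - P}_{\Lc(\Hc)} \leq \tfrac{r}{1} \cdot 2 M_r^2 \nr{B_m}_{\Lc(\Hc)} \to 0$. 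Since $P$ and $P_m$ are projections and $\nr{P_m-P}_{\Lc(\Hc)} < 1$ for $m$ large, $P_m$ has the same (nonzero, finite) rank as $P$; in particular $P_m \neq 0$. Therefore the part of $\sigma(T_m)$ enclosed by $\gamma_r$ is nonempty: the range of $P_m$ is a nonzero closed $T_m$-invariant subspace on which $T_m$ restricts to a bounded operator with nonempty spectrum contained in $D(\l,r)$. Picking any point $\l_m$ of that spectrum — which, $\Ran(P_m)$ being finite-dimensional, is an eigenvalue of $T_m$ — we obtain $\abs{\l_m - \l} \leq r \leq \e$ for all $m$ large enough.

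The argument is essentially bookkeeping with the Riesz projection; the only point requiring care is that $T$ is merely closed (not bounded), so one must justify that $\gamma_r$ stays in $\rho(T)$ and that $M_r < \infty$ — which follows from $\l$ being isolated in $\sigma(T)$ and from compactness of $\gamma_r$ together with continuity of the resolvent on $\rho(T)$ — and that $\Ran(P_m)$ is finite-dimensional so that a spectral value there is genuinely an eigenvalue (this uses that $\Ran(P)$ is finite-dimensional, i.e.\ that $\l$ has finite algebraic multiplicity; in the application to $K_n$ this holds because the resolvent is compact, but if one wants the cleanest statement one can instead just invoke that $\sigma(T_m) \cap D(\l,r) \neq \emptyset$ and, in the concrete setting, every point of $\sigma(K_n)$ is an eigenvalue). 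I expect no real obstacle here: the proposition is a textbook perturbation result, and the proof above is exactly the specialization of \cite[\S IV.3.5]{kato} to the situation at hand.
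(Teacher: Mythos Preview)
Your proof is correct and follows essentially the same route as the paper's: isolate $\l$ by a small circle, use the Neumann-series factorization $T_m-z=(T-z)(1+(T-z)\inv B_m)$ to show $\gamma_r\subset\rho(T_m)$ with uniform resolvent bounds, compare the Riesz projections $P$ and $P_m$, and invoke that $\nr{P_m-P}<1$ forces equal (finite, nonzero) ranks. Your remark that the finite-rank conclusion tacitly requires $\l$ to have finite algebraic multiplicity is a fair observation; the paper uses this implicitly (writing $\dim(\Ran(P))\in\N^*$), and in the application to $K_n$ it is guaranteed by compactness of the resolvent.
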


\begin{proof}
We set $\mathcal C = \set {\z \in \C, \abs {\z - \l}  = \e}$. Without loss of generality, we can assume that $\e> 0$ is so small that $\l$ is the only point of $\Sp(T)$ in the disk $D(\l,2\e)$. We set $M = \sup_{\z \in \mathcal C} \nr{(T-\z)\inv}$. Since 
\[
T_m - \z = (T-\z) \big( 1 + (T-\z)\inv B_m \big),
\]
we see that $\mathcal C \cap \Sp(T_m) = \emptyset$ as soon as $M \nr{B_m} < \frac 1 2$. Moreover, in this case, we have for $\z \in \mathcal C$,
\[
\nr{(T_m-\z)\inv} \leq 2 M.
\]
We set 
\[
P = \frac 1 {2i\pi} \int_{\mathcal C} (T-\z)\inv \, d\z.
\]
We similarly define $P_m$ by replacing $T$ by $T_m$. Then we have by the resolvent identity
\[
\nr{P_m - P} = \nr{\frac 1 {2i\pi} \int_{\mathcal C} (T-z)\inv B_m (T_m-\z)\inv \, d\z} \leq 2\e M^2 \Vert B_m \Vert.
\]
Thus for $m$ small enough we have $\nr{P_m - P} < 1$. By \cite[\S I.4.6]{kato} this implies that 
\[
\dim (\Ran (P_m)) = \dim (\Ran (P)) \in \N^*.
\]
This proves that $T_m$ has an eigenvalue $\l_m$ such that $\abs{\l - \l_m} < \e$.
\end{proof}

\begin{proposition} \label{prop-lambda-n}
For $n \in \N^*$ large enough there exists an eigenvalue $\l_n$ of $K_n$ such that 
\[
\abs{\l_n -e^{i\frac \pi 4} q'(0) \sqrt n } = \littleo n {+\infty} \left(\sqrt n \right). 
\]
\end{proposition}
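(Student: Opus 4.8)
The plan is to deduce Proposition \ref{prop-lambda-n} from the resolvent comparison already established in Proposition \ref{prop-spectre-Kn}(ii) together with the abstract eigenvalue perturbation result of Proposition \ref{prop-perturbation-vp-kato}, applied not directly to $K_n$ but to the compact inverses. The point is that the first eigenvalue $\l_n$ of $K_n$ corresponds, via $z \mapsto 1/z$, to an eigenvalue of the compact operator $K_n^{-1}$, and likewise the first eigenvalue $\sqrt n\, q'(0) e^{i\pi/4}$ of $H_n$ corresponds to an eigenvalue $\mu_n := \frac{1}{\sqrt n\, q'(0)} e^{-i\pi/4}$ of $H_n^{-1}$. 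However, since the target quantity $\sqrt n\, q'(0)$ grows with $n$, one must be careful: a naive application of Proposition \ref{prop-perturbation-vp-kato} with a fixed $\e$ is not what is needed, and the operators themselves ($H_n$, $K_n$) depend on $n$.

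The cleanest route is to rescale. First I would reduce to the $n$-independent model $H_1$ by the dilation \eqref{eq-eigenfunction-Hn}: writing $U_n$ for the unitary $y \mapsto n^{1/8} q'(0)^{1/4}\, u(n^{1/4} q'(0)^{1/2} y)$ on $L^2(\R)$, one has $U_n^{-1} H_n U_n = \sqrt n\, q'(0)\, H_1'$ for an appropriate fixed operator (the complex harmonic oscillator $-\partial_{yy} + y^2$ up to a harmless constant factor), so $\sqrt n\, H_n^{-1}$ is unitarily equivalent to a fixed compact operator whose smallest eigenvalue in modulus is $\frac{1}{q'(0)} e^{-i\pi/4}$ and is isolated. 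Then I would set $\mathscr H = L^2(I)$, $T = \sqrt n\, \1_I H_n^{-1} \1_I^*$, and $T_n = \sqrt n\, K_n^{-1}$, noting by Proposition \ref{prop-spectre-Kn}(ii) that $\nr{T_n - T}_{\Lc(L^2(I))} = \sqrt n \cdot \littleo{n}{+\infty}(1/\sqrt n) = \littleo{n}{+\infty}(1)$. The only remaining subtlety is that $T$ itself depends on $n$ through the cutoff/restriction, whereas Proposition \ref{prop-perturbation-vp-kato} wants a fixed $T$; I would handle this by replacing $T$ with the genuinely $n$-independent operator obtained from the $H_1$ model and absorbing the difference $\nr{\sqrt n\, \1_I H_n^{-1} \1_I^* - (\text{fixed model inverse})}_{\Lc(L^2(\R))} \to 0$ (which follows from the same truncation estimate used at the end of the proof of Proposition \ref{prop-spectre-Kn}, showing the eigenfunction of $H_n$ is exponentially small away from $0$) into $B_m$.

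With this setup, Proposition \ref{prop-perturbation-vp-kato} applies with a fixed isolated eigenvalue $\frac{1}{q'(0)} e^{-i\pi/4}$ of the fixed model operator and perturbations $B_n$ of vanishing norm: for every $\e > 0$, for $n$ large, $\sqrt n\, K_n^{-1}$ has an eigenvalue within $\e$ of $\frac{1}{q'(0)} e^{-i\pi/4}$. Undoing the inversion, $K_n$ has an eigenvalue $\l_n$ with $\big| \frac{\l_n}{\sqrt n} - q'(0) e^{i\pi/4} \big|$ small — more precisely, $\frac{1}{\l_n/\sqrt n}$ is within $\e$ of $\frac{1}{q'(0)} e^{-i\pi/4}$, and since the latter is bounded away from $0$ and $\infty$, this transfers to $\big| \l_n/\sqrt n - q'(0) e^{i\pi/4}\big| \le C\e$ for $n$ large; letting $\e \to 0$ along a suitable diagonal gives $\l_n = q'(0) e^{i\pi/4} \sqrt n + \littleo{n}{+\infty}(\sqrt n)$, which is exactly \eqref{eq-lambda-n}.

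The main obstacle I anticipate is bookkeeping the $n$-dependence correctly: Proposition \ref{prop-perturbation-vp-kato} is stated for a single closed operator $T$ perturbed by norm-vanishing $B_m$, so the argument must genuinely produce a fixed reference operator (the rescaled $H_1$-type inverse) and show that both $\sqrt n\, \1_I H_n^{-1}\1_I^*$ and $\sqrt n\, K_n^{-1}$ converge to it in operator norm. The $K_n \to H_n$ half is Proposition \ref{prop-spectre-Kn}(ii) after multiplying by $\sqrt n$; the $H_n \to$ fixed model half requires only the explicit Gaussian decay \eqref{eq-eigenfunction-Hn} of the eigenfunctions of $H_n$ (equivalently, an Agmon-type cutoff estimate on $\R$), which is the same mechanism already used in the last step of the proof of Proposition \ref{prop-spectre-Kn}. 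Everything else is routine complex-analytic bookkeeping via the Riesz projection and the map $z \mapsto 1/z$.
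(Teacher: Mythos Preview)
Your approach is correct and essentially the same as the paper's: rescale the inverses by $\sqrt n$, compare to a fixed model via Proposition \ref{prop-spectre-Kn}(ii), apply Proposition \ref{prop-perturbation-vp-kato}, and invert. The paper sidesteps the bookkeeping obstacle you anticipate by working on $L^2(\R)$ rather than $L^2(I)$: it conjugates $\1_I^* K_n^{-1} \1_I$ by the unitary dilation $\Theta_n u(y) = n^{1/8} u(n^{1/4} y)$, so that $\sqrt n\, \Theta_n^{-1} \1_I^* K_n^{-1} \1_I \Theta_n \to H_1^{-1}$ follows in one stroke from Proposition \ref{prop-spectre-Kn}(ii) and the identity $\Theta_n^{-1} H_n \Theta_n = \sqrt n\, H_1$, with no separate truncation step needed.
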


\begin{proof}
We consider on $L^2(\R)$ the unitary operator $\Th_n$ which maps $u$ to 
\[
\Th_n u : x \mapsto n^{\frac 18} u \left( n^{\frac 14} x \right).
\]
Then we have $\Th_n\inv H_n \Th_n = \sqrt n H_1$. By Proposition \ref{prop-spectre-Kn},
\[
\nr{\sqrt n \Th_n\inv \1_I^* K_n\inv \1_I \Th_n - H_1\inv}_{\Lc(L^2(\R))} \limt n {+\infty} 0.
\]
We set $\l = e^{i\frac \pi 4} q'(0)$. Then $\m = \l\inv$ is an eigenvalue of $H_1\inv$. By Proposition \ref{prop-perturbation-vp-kato}, there exists an eigenvalue $\m_n$ of $\sqrt n \Th_n\inv \1_I^* K_n \inv \1_I \Th_n$ such that $\m_n$ goes to $\m$ as $n$ goes to $+\infty$. Then $n^{-\frac 12} \m_n$ is an eigenvalue of $\1_I^* K_n\inv \1_I$, and hence an eigenvalue of $K_n\inv$. We conclude the proof by setting $\l_n = \sqrt n \m_n\inv$.
\end{proof}

\subsection{Agmon estimates} \label{sec-Agmon}

To conclude the proof of Proposition \ref{prop-ln-psin}, it remains to prove the estimate \eqref{eq-Agmon-psi-n} for an eigenfunction $\psi_n$ of $K_n$ corresponding to the eigenvalue $\l_n$.

This estimate is given by an Agmon estimate. The Agmon estimates measure how the eigenfunctions corresponding to the smallest eigenvalues of a Schr\"odinger operator concentrate near the minimum of the potential. Exponential decay of eigenfunctions and precise Agmon estimates are classical results for real-valued potentials (see for instance \cite{Agmon85,helffer1}). We refer to \cite{KrejcirikRayRoySie17} for Agmon estimates for a general non-selfadjoint Laplacian.

Here, it is expected that for large $n$ an eigenfunction corresponding to the first eigenvalue $\l_n$ of $K_n$ will concentrates near 0, where the potential $q^2$ reaches its minimum. In particular, such an eigenfunction will be small at the boundary, so it is indeed a good candidate to break an observability estimate like \eqref{equ_obsineq_fixedn} when $T < \Tmin$.\\

\begin{proposition} \label{prop-Agmon}
Let $E > 0$ and $\e \in ]0,1[$. For $n \in \N$ and $y \in \bar I$ we set 
\begin{equation} \label{def-W}
W_{n,\e}(y) = \frac {1-\e} {\sqrt 2} \abs{\int_0^y \sqrt{\big( nq(s)^2 - \sqrt{n}(E + \e) \big)_+} \, ds},
\end{equation}
where for $\s \in \R$ we write $\s_+$ for $\max(0,\s)$. There exists $C > 0$ such that for $n \in \N$, $u \in \Dom(K_n)$ and $\l \in \C$ with 
\begin{equation} \label{hyp-lambda}
\abs{\Re(\l)} + \abs{\Im(\l)}  \leq E \sqrt n,
\end{equation}
we have 
\begin{multline*}
\nr{e^{W_{n,\e}} u'}_{L^2(I)}^2 + \sqrt n \nr{e^{W_{n,\e}} u}_{L^2(I)}^2
\leq C \sqrt n \nr{u}_{L^2(I)}^2 +\frac {C}{\sqrt n} \nr{e^{W_{n,\e}} (K_n-\l)u}_{L^2(I)}^2.
\end{multline*}
\end{proposition}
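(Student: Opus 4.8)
The plan is to carry out the classical Agmon multiplier argument, adapted to the non-selfadjoint operator $K_n = -\partial_{yy} + inq(y)^2$. First I would introduce the abbreviation $W = W_{n,\e}$ and test the equation $(K_n - \l)u = f$ against $e^{2W} u$, i.e.\ compute $\Re \innp{(K_n-\l)u}{e^{2W}u}_{L^2(I)}$. Since $u \in \Dom(K_n) = H^2(I)\cap H^1_0(I)$ and $W \in W^{1,\infty}(I)$ (note $W'$ is bounded because the integrand in \eqref{def-W} is bounded for fixed $n$), the boundary terms vanish upon integration by parts, and the standard computation gives the Agmon--Persson identity
\[
\Re \innp{(K_n-\l)u}{e^{2W}u} = \nr{e^W u'}_{L^2(I)}^2 - \nr{W' e^W u}_{L^2(I)}^2 + \int_I \big( nq(y)^2 - \Re(\l) \big) e^{2W} \abs u^2 \, dy .
\]
The point of the weight is the choice $W'(y)^2 = \tfrac{(1-\e)^2}{2}\big(nq(y)^2 - \sqrt n (E+\e)\big)_+$, so that on the region $\{nq(y)^2 \geq \sqrt n (E+\e)\}$ one has
\[
nq(y)^2 - \Re(\l) - W'(y)^2 \;\geq\; nq(y)^2 - E\sqrt n - \tfrac{(1-\e)^2}{2}\big(nq(y)^2 - \sqrt n(E+\e)\big) \;\gtrsim\; nq(y)^2 \;\gtrsim\; \sqrt n,
\]
using \eqref{hyp-lambda} and the fact that $\tfrac{(1-\e)^2}{2} < 1$ (in fact $\le \tfrac12$), which gives a coercive lower bound of order $\sqrt n$ for the potential term there.

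The remaining difficulty is the \textbf{classically forbidden complement} $\mathcal F_n = \{y \in I : nq(y)^2 < \sqrt n (E+\e)\}$, where $W' = 0$ so $W$ is constant on each connected component, and where the potential term is not coercive. On $\mathcal F_n$ I would simply bound $\big| nq(y)^2 - \Re(\l)\big| \lesssim \sqrt n$, so the contribution of this region to the potential integral is at most $C\sqrt n \int_{\mathcal F_n} e^{2W}\abs u^2$. The key observation is that $W$ is \emph{bounded on $\mathcal F_n$ by a constant independent of $n$}: indeed $\mathcal F_n$ is an interval around $0$ of length $O(n^{-1/4})$ (since $q(y)^2 \sim q'(0)^2 y^2$ near $0$ forces $|y| \lesssim n^{-1/4}$ on $\mathcal F_n$) and $|W'| \lesssim \sqrt{nq(y)^2} \lesssim \sqrt n \cdot n^{-1/4} = n^{1/4}$ there, so $\sup_{\mathcal F_n} W \lesssim n^{1/4}\cdot n^{-1/4} = O(1)$. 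Hence $\int_{\mathcal F_n} e^{2W}\abs u^2 \lesssim \nr u_{L^2(I)}^2$, and the forbidden-region contribution is absorbed into the $C\sqrt n \nr u^2$ term on the right-hand side. I expect this estimate on $\sup_{\mathcal F_n} W$ — more precisely, getting it uniform in $n$ — to be the main technical point; one has to be a little careful because the lower edge of $\mathcal F_n$ (near $\pm\ell_\pm$) need not be crossed, but since $W$ is computed from $0$ and $0 \in \mathcal F_n$, the weight only starts growing once we leave $\mathcal F_n$, so this is fine.

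Putting the two regions together, the identity above yields
\[
\nr{e^W u'}_{L^2(I)}^2 + c\sqrt n \int_{I\setminus\mathcal F_n} e^{2W}\abs u^2 \,dy \;\le\; C\sqrt n \nr u_{L^2(I)}^2 + \Re\innp{(K_n-\l)u}{e^{2W}u},
\]
and for the last term I would use Cauchy--Schwarz together with Young's inequality in the form $\big|\innp{f}{e^{2W}u}\big| \le \tfrac{1}{2\sqrt n}\nr{e^W f}^2 + \tfrac{\sqrt n}{2}\nr{e^W u}^2$, then absorb $\tfrac{\sqrt n}{2}\nr{e^W u}_{L^2(I\setminus\mathcal F_n)}^2$ into the coercive term (shrinking $c$) and control $\tfrac{\sqrt n}{2}\nr{e^W u}_{L^2(\mathcal F_n)}^2 \lesssim \sqrt n\nr u^2$ as before. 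Finally, to upgrade $\sqrt n\int_{I\setminus\mathcal F_n} e^{2W}\abs u^2$ to the full $\sqrt n\nr{e^W u}_{L^2(I)}^2$ appearing in the statement, I add back $\sqrt n\nr{e^W u}_{L^2(\mathcal F_n)}^2 \lesssim \sqrt n\nr u^2$ on both sides, which is again absorbed into $C\sqrt n\nr u^2$. This gives exactly the claimed inequality with a constant $C$ depending only on $E$, $\e$, and $q$.
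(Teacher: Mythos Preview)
Your Agmon--Persson identity is wrong: the potential of $K_n$ is $inq(y)^2$, which is \emph{purely imaginary}, so it contributes nothing to the real part. The correct computation is
\[
\Re \innp{(K_n-\l)u}{e^{2W}u} = \nr{(e^W u)'}^2 - \nr{W' e^W u}^2 - \Re(\l)\nr{e^W u}^2,
\]
with no $nq^2$ term at all. Your subsequent coercivity estimate $nq^2 - \Re(\l) - W'^2 \gtrsim \sqrt n$ therefore has no basis; you are implicitly treating $K_n$ as a Schr\"odinger operator with real potential $nq^2$.

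The paper recovers the missing $nq^2$ from the \emph{imaginary} part: one has $\Im Q_n(u,e^{2W}u) = n\nr{q e^W u}^2 + \Im\innp{u'}{2W' e^{2W}u}$, and the cross term is controlled by Young's inequality with a parameter $\a \in ]0,1[$, at the cost of inflating the coefficient in front of $W'^2$ from $1$ to $\b = 2 + \a^{-1} - \a > 2$. Adding real and imaginary parts then yields
\[
\e_1 \nr{e^W u'}^2 + \int_I \big(nq^2 - \b W'^2 - \Re(\l) - \Im(\l)\big) e^{2W}\abs u^2 \leq 2\nr{e^W(K_n-\l)u}\nr{e^W u}.
\]
This is precisely why the weight \eqref{def-W} carries the factor $\tfrac{1-\e}{\sqrt 2}$: one needs $\b W'^2 = \big(nq^2 - \sqrt n(E+\e)\big)_+$, i.e.\ $\b = 2/(1-\e)^2$, and $\b > 2$ is unavoidable. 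Your argument with coefficient $1$ in front of $W'^2$ would make the constant $\tfrac{1}{\sqrt 2}$ in the weight look arbitrary, which it is not. Once you fix the identity, the rest of your argument (the forbidden-region analysis, the observation that $W \equiv 0$ on $\mathcal F_n$ since $W'$ vanishes there and $W(0)=0$, the Cauchy--Schwarz/Young absorption) matches the paper's and goes through.
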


This result is proved with more generality in \cite{KrejcirikRayRoySie17}. For the reader convenience we recall a proof in our 1-dimensional setting.

\begin{proof}
We denote by $Q_n$ the quadratic form corresponding to $K_n$. It is defined for $f,g \in H_0^1(I)$ by
\[
Q_n(f,g) = \int_I f' \bar g'  + i n \int_I q^2  f \bar g.
\]

\stepp Let $u \in \Dom(K_n)$. For $\zeta \in W^{1,\infty}(\bar I,\R)$, we have
\begin{align*}
\innp{u'}{(\zeta^2 u)'}_{L^2(I)} = \innp{\z u'}{2\z'u + \z u'}_{L^2(I)} = \innp{(\zeta u)' - \zeta' u}{(\zeta u)' + \zeta' u}_{L^2(I)},
\end{align*}
so
\begin{equation*}
\Re \innp{u'}{(\zeta^2 u)'}_{L^2(I)} = \nr{(\zeta u)'}^2_{L^2(I)} - \nr{\zeta' u}^2_{L^2(I)}.
\end{equation*}

\stepp 
Let $W \in W^{1,\infty}(\bar I,\R)$. Applied with $\zeta = e^{W}$, this equality gives
\begin{equation*}
\Re \big (Q_n (u,e^{2W}u) \big) = \Re \innp{u'}{(e^{2W} u)'}_{L^2(I)} = \nr{(e^{W}u)'}^2_{L^2(I)} - \nr{W' e^W u}^2_{L^2(I)}.
\end{equation*}
On the other hand, a direct computation shows that
\[
\Im \big (Q_n (u,e^{2W}u) \big) = \Im \innp{u'}{2W' e^{2W} u}_{L^2(I)} + n \nr{q e^W u}^2_{L^2(I)}.
\]
Let $\a \in ]0,1[$. Since 
\begin{align*}
\abs{\Im \innp{u'}{2W' e^{2W} u}_{L^2(I)}}
& = 2 \abs{\Im \innp{(e^W u)'}{W' e^{W} u}_{L^2(I)}} \\
& \leq \a \nr{(e^W u)'}^2_{L^2(I)} + \a\inv\nr{W' e^W u}^2_{L^2(I)},
\end{align*}
we have 
\[
\Im \big (Q_n (u,e^{2W}u) \big) \geq n \nr{q e^W u}^2_{L^2(I)} - \a \nr{(e^W u)'}^2_{L^2(I)} - \a\inv\nr{W' e^W u}^2_{L^2(I)},
\]
and hence 
\begin{multline*} 
\Re \big (Q_n (u,e^{2W}u) \big) + \Im \big (Q_n (u,e^{2W}u) \big)\\
\geq  (1-\a) \nr{(e^{W}u)'}^2_{L^2(I)} + \int_{I} \left( nq^2 - (1+\a\inv) W'^2  \right) |e^W u|^2 .
\end{multline*}
Finally,
\begin{align*}
\nr{(e^{W}u)'}^2_{L^2(I)}
& \geq \nr{e^W u'}^2_{L^2(I)} + \nr{W' e^W u}^2_{L^2(I)} - 2\nr{e^W u'}_{L^2(I)}\nr{W' e^W u}_{L^2(I)}\\
& \geq \frac 12 \nr{e^W u'}^2_{L^2(I)} - \nr{W' e^W u}^2_{L^2(I)},
\end{align*}
so if we set $\beta = 2 + \a\inv - \a$ and $\e_1 = \frac {1-\a} 2$, we get
\begin{multline} \label{minor-Re-Im-Q}
\Re \big (Q_n (u,e^{2W}u) \big) + \Im \big (Q_n (u,e^{2W}u) \big)\\
\geq  \e_1 \nr{e^{W}u'}^2_{L^2(I)} + \int_{I} \left( nq^2 - \beta W'^2  \right) |e^W u|^2 .
\end{multline}

\stepp On the other hand, for $\l \in \C$ we have 
$$
Q_n (u,e^{2W}u)  = \l \nr{e^W u}^2_{L^2(I)} +  \innp{(K_n-\l)u}{e^{2W}u}_{L^2(I)}.
$$
We take the real and imaginary parts of this equality. With \eqref{minor-Re-Im-Q} this gives
\begin{multline} \label{ineq-Agmon}
\e_1 \nr{e^{W}u'}^2_{L^2(I)} + \int_{I} \left( nq^2 - \beta W'^2 - \Re(\l) - \Im(\l) \right) |e^W u|^2 \\
\leq 2 \nr{e^W (K_n-\l)u}_{L^2(I)} \nr{e^W u}_{L^2(I)}.
\end{multline}

\stepp Now assume that \eqref{hyp-lambda} holds. Let $\d_n^\pm \in ]0, \ell_\pm]$ be such that 
\[
[-\d_n^-,\d_n^+] = \set{y \in \bar I \st nq(y)^2 \leq \sqrt{n}(E+\e)}.
\] 
Let $W_{n,\e}$ be given by \eqref{def-W}. We choose $\alpha \in ]0,1[$ in such a way that
\[
\beta = \frac {2}{(1-\e)^2}.
\]
On $[-\d_n^-,\d_n^+]$, $W_{n,\e}$ and hence $W'_{n,\e}$ vanish, so
$$ 
\beta W_{n,\e}'(y)^2 + \Re(\l)  + \Im(\l) - nq(y)^2   \leq E \sqrt n,
$$
while on $I \setminus [-\d_n^-,\d_n^+]$ we have 
\[
\b W_{n,\e}'(y)^2 = nq(y)^2 - \sqrt{n}(E+\e),
\]
so 
\[
n q(y)^2 - \beta W_{n,\e}'(y)^2 - \Re(\l) - \Im(\l) \geq \e \sqrt n.
\]
Then, by \eqref{ineq-Agmon}, 
\begin{eqnarray*}
\lefteqn{\e_1 \nr{e^{W_{n,\e}} u'}^2_{L^2(I)} + \e \sqrt n \int_{I \setminus [-\d_n^-,\d_n^+]} \abs{e^{W_{n,\e}} u}^2 }\\
&& \leq 2 \nr{e^{W_{n,\e}} (K_n-\l)u}_{L^2(I)} \nr{e^{W_{n,\e}} u}_{L^2(I)} + E \sqrt n  \int_{-\d_n^-}^{\d_n^+}  \abs{u}^2\\
&& \leq \frac {\e \sqrt n}2 \nr{e^{W_{n,\e}} u}^2_{L^2(I)} + \frac {2}{\e \sqrt n}  \nr{e^{W_{n,\e}} (K_n-\l)u}^2_{L^2(I)} +  E \sqrt n \int_{-\d_n^-}^{\d_n^+}  \abs{u}^2,
\end{eqnarray*}
and finally,
\begin{multline*}
\e_1 \nr{e^{W_{n,\e}} u'}^2_{L^2(I)} + \e \sqrt n \nr{e^{W_{n,\e}} u}^2_{L^2(I)}\\
\leq \frac {\e \sqrt n}{2} \nr{e^{W_{n,\e}} u}^2_{L^2(I)} + \frac {2}{\e \sqrt n}  \nr{e^{W_{n,\e}} (K_n-\l)u}^2_{L^2(I)} +  (E+\e)  \sqrt n \int_{-\d_n^-}^{\d_n^+} \abs{u}^2.
\end{multline*}
The proposition is proved.
\end{proof}

For $\e \in ]0,1]$ and $y \in I$ we set 
\[
\kappa_\e(y) = \frac {(1-\e)} {\sqrt 2}  \int_0^y q(s) \, ds.
\]
We first check that the estimate of Proposition \ref{prop-Agmon} still holds with $W_{n,\e}$ replaced by $\sqrt n \kappa_\e$.

\begin{proposition} \label{prop-W}
Let $E > 0$ and $\e \in ]0,1]$. There exists $C_\e > 0$ such that for $n \in \N$ and $y \in I$ we have 
\begin{equation} \label{eq-minor-W}
W_{n,\e/2}(y) \geq \sqrt n \kappa_\e (y) - C_\e.
\end{equation}
\end{proposition}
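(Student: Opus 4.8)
The goal is to compare the "true" Agmon weight $W_{n,\e/2}$, built from the genuine potential $q^2$ with the energy shift $\sqrt n(E+\e/2)$ truncated at its positive part, with the simpler weight $\sqrt n \kappa_\e$, built directly from $q$ without any truncation. The plan is to estimate the two integrands pointwise and then integrate. First I would record the elementary inequality $\sqrt{(a-b)_+} \geq \sqrt a - \sqrt b$ for $a \geq 0$, $b \geq 0$ (valid because squaring both sides, when the right side is nonnegative, gives $(a-b)_+ \ge a - 2\sqrt{ab}+b \ge a-b$, and when the right side is negative it is trivial). Applying this with $a = nq(s)^2$ and $b = \sqrt n (E + \e/2)$ gives, for every $s$,
\[
\sqrt{\big( nq(s)^2 - \sqrt n(E+\tfrac\e2)\big)_+} \;\geq\; \sqrt n\, \abs{q(s)} - n^{1/4}\sqrt{E+\tfrac\e2}.
\]

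**Main step.** Multiplying by $\frac{1-\e/2}{\sqrt 2}$ and integrating from $0$ to $y$ (taking absolute values to handle both signs of $y$), the left-hand side integrates to $W_{n,\e/2}(y)$, while the right-hand side gives
\[
W_{n,\e/2}(y) \;\geq\; \frac{1-\e/2}{\sqrt 2}\Big( \sqrt n \,\Big|\int_0^y \abs{q(s)}\,ds\Big| - n^{1/4}\sqrt{E+\tfrac\e2}\,\abs y\Big).
\]
Since $q$ has constant sign on each side of $0$ (because $q(0)=0$ and $q'>0$), we have $\big|\int_0^y \abs{q}\big| = \big|\int_0^y q\big|$, hence $\frac1{\sqrt2}\big|\int_0^y\abs q\big| = \frac1{1-\e/2}\cdot\frac{1-\e/2}{\sqrt2}\big|\int_0^y q\big|$; comparing $\frac{1-\e/2}{\sqrt2}\big|\int_0^y q\big|$ with $\kappa_\e(y)=\frac{1-\e}{\sqrt2}\big|\int_0^y q\big|$ and using $1-\e/2 > 1-\e$, the first term is bounded below by $\sqrt n\, \kappa_\e(y)$. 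The remaining error term is $-\frac{1-\e/2}{\sqrt2} n^{1/4}\sqrt{E+\e/2}\,\abs y$, which is bounded in absolute value by $C'_\e n^{1/4}$ uniformly on the compact interval $\bar I$; since $n^{1/4}$ is not bounded, I need to be slightly more careful — but in fact on the region $\abs y \le \d_n^\pm$ the weight $W_{n,\e/2}$ vanishes and $\kappa_\e$ is also tiny (of order $n^{-1/4}$ since $\d_n^\pm = O(n^{-1/4})$), so the inequality $W_{n,\e/2} \ge \sqrt n\kappa_\e - C_\e$ holds there with a constant; outside that region one carries the error along and absorbs it using that the main gain $\sqrt n(\kappa_\e(y) - \kappa_{\e/2}(y)) = \tfrac{\e/2}{\sqrt2}\sqrt n\big|\int_0^y q\big|$ dominates $n^{1/4}\abs y$ once $\abs y \gtrsim n^{-\rho}$.

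**Expected obstacle.** The only delicate point is that the naive bound produces an error of size $n^{1/4}$ rather than a true constant, so the statement "$\geq \sqrt n \kappa_\e - C_\e$" must be salvaged by exploiting the slack between the parameter $\e$ in $\kappa_\e$ and the parameter $\e/2$ used to build $W_{n,\e/2}$: write $\sqrt n\kappa_\e = \sqrt n \kappa_{\e/2} - \tfrac{\e/2}{\sqrt2}\sqrt n\big|\int_0^y q\big|$, bound $W_{n,\e/2} \geq \sqrt n\kappa_{\e/2} - C_\e' n^{1/4}\abs y$ by the computation above, and then note that $\tfrac{\e/2}{\sqrt2}\sqrt n \big|\int_0^y q\big| \geq c_\e \sqrt n y^2 \geq C_\e' n^{1/4}\abs y$ whenever $y^2 \geq C_\e'' n^{-1/2}$, i.e. whenever $\abs y \geq C_\e'' n^{-1/4}$. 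On the complementary region $\abs y \leq C_\e'' n^{-1/4}$ both $W_{n,\e/2}(y)$ and $\sqrt n\kappa_\e(y)$ are $O(1)$, so the inequality holds trivially with a suitable constant $C_\e$. Combining the two regions yields \eqref{eq-minor-W} uniformly in $n$ and $y$, which completes the proof.
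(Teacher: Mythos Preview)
Your proof is correct and follows essentially the same strategy as the paper's: both arguments split the interval at the scale $\abs y \sim n^{-1/4}$, use that $W_{n,\e/2}\ge 0$ and $\sqrt n\,\kappa_\e = O(1)$ in the inner region, and exploit the slack between the prefactors $1-\e/2$ and $1-\e$ in the outer region to absorb a lower-order error.

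The one technical variation is the pointwise inequality you start from. You use $\sqrt{(a-b)_+}\geq\sqrt a-\sqrt b$, which produces an additive error $O(n^{1/4}\abs y)$ that you then kill with the quadratic gain $\frac{\e}{2\sqrt2}\sqrt n\int_0^y q \geq c_\e\sqrt n\,y^2$. The paper instead introduces a cutoff $\eta_n^\pm$ where $nq^2 = \alpha\sqrt n(E+\e/2)$ for a large parameter $\alpha$, so that on $[\eta_n^+,y]$ one has directly $\sqrt{nq^2-\sqrt n(E+\e/2)}\geq\sqrt{1-\alpha^{-1}}\,\sqrt n\,q$, and then chooses $\alpha$ large enough that $(1-\e/2)\sqrt{1-\alpha^{-1}}\geq 1-\e$. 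Your route avoids the auxiliary parameter $\alpha$; the paper's route avoids the quadratic-minimization step. Both are equally elementary.

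One presentational remark: your ``Main step'' paragraph first appears to claim the result directly with the $n^{1/4}$ error and only afterwards (in ``Expected obstacle'') supplies the fix. In a clean write-up you should merge these, since the first pass does not yet prove anything.
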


\begin{proof}
It is enough to prove the inequality for $n$ large. Let $\a \geq 1$ to be fixed large enough later. For $n$ large enough we consider $\eta_n^\pm \in ]0,\ell_\pm]$ such that 
\[
q(\pm \eta_n^\pm)^2 = \frac {\a}{\sqrt n} \left(E+ \frac \e 2 \right).
\]
We have 
\[
\eta_n^\pm = \mathop{\Oc}_{n \to +\infty} \big( n^{-\frac 14} \big), 
\]
and hence
\begin{equation*}
\sqrt n \kappa_\e(\pm \eta_n^\pm)  =   \mathop{\Oc}_{n \to +\infty} (1).
\end{equation*}
In particular, for $n$ large enough the inequality \eqref{eq-minor-W} holds for $y \in [-\eta_n^-,\eta_n^+]$ if $C_\e$ is chosen large enough, since then the right-hand side is negative. On the other hand, for $y \geq \eta_n^+$ we have 
\begin{align*}
\int_{\eta_n^+}^{y} \sqrt{nq(s)^2 - \left(E+\frac \e 2 \right)\sqrt n} \, ds \geq \sqrt{1-\a \inv } \sqrt n \int_{\eta_n^+}^y q(s) \, ds.
\end{align*}
Then 
\[
W_{n,\e/2}(y) 
\geq \frac {1-\frac \e 2}{1 - \e} \sqrt{1-\a \inv } \sqrt n \kappa_\e(y) + \mathop{\Oc}_{n \to +\infty} (1).
\]
For $\a$ large enough this gives \eqref{eq-minor-W}. We proceed similarly for $y \leq -\eta_n^-$.
\end{proof}

Combining Propositions \ref{prop-Agmon} and \ref{prop-W} we obtain the following version of the Agmon estimates:

\begin{proposition} \label{prop-psi-L2}
Let $E > 0$ and $\e \in ]0,1]$. There exists $C > 0$ such that for $n \in \N$, $u \in \Dom(K_n)$ and $\l \in \C$ with $\abs{\Re(\l)} + \abs{\Im(\l)} \leq E \sqrt n$ we have  
\begin{equation*}
\big\|e^{\sqrt n \kappa_\e} u'\big\|_{L^2(I)}^2 + \sqrt n \big\|e^{\sqrt n \kappa_\e} u\big\|_{L^2(I)}^2
\leq C \sqrt n \nr{u}_{L^2(I)}^2 + \frac {C}{\sqrt n} \big\|e^{\sqrt n \k_\e} (K_n-\l)u\big\|_{L^2(I)}^2.
\end{equation*}
\end{proposition}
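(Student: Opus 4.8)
The statement is exactly Proposition~\ref{prop-Agmon} with the implicitly–defined weight $e^{W_{n,\e'}}$ — awkward to use, since $W_{n,\e'}$ depends on $n$ through the truncation at $\sqrt n(E+\e')$ — replaced by the explicit weight $e^{\sqrt n\kappa_\e}$, whose profile $\kappa_\e$ does not depend on $n$. The plan is therefore to apply Proposition~\ref{prop-Agmon} with $\e/2$ in place of $\e$ (legitimate, since $\e/2\in\,]0,\tfrac12]\subset\,]0,1[$ and the hypothesis on $\l$ is unchanged), and then to trade the weight $W_{n,\e/2}$ for $\sqrt n\kappa_\e$ on both sides using the pointwise comparison of Proposition~\ref{prop-W}.

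For the left-hand side this is immediate. Proposition~\ref{prop-W}, applied with the given $E$ and with $\e$, yields $W_{n,\e/2}(y)\ge \sqrt n\kappa_\e(y)-C_\e$ for all $y\in\bar I$ and all $n$, hence $e^{W_{n,\e/2}(y)}\ge e^{-C_\e}e^{\sqrt n\kappa_\e(y)}$; so the two $L^2(I)$ norms on the left of the claimed estimate are $\le e^{2C_\e}$ times the corresponding norms produced by Proposition~\ref{prop-Agmon}. For the source term on the right one uses instead the elementary upper bound $W_{n,\e/2}(y)\le \sqrt n\kappa_{\e/2}(y)$, which follows from $\sqrt{(nq(s)^2-\sqrt n(E+\e/2))_+}\le\sqrt n\,|q(s)|$ together with the fact that $q$ keeps a constant sign on each of $[0,\ell_+]$ and $[-\ell_-,0]$, so that $\bigl|\int_0^y\sqrt n|q|\bigr|=\sqrt n\int_0^y q=\frac{\sqrt2}{1-\e/2}\sqrt n\kappa_{\e/2}(y)$; this bounds $\|e^{W_{n,\e/2}}(K_n-\l)u\|$ by $\|e^{\sqrt n\kappa_{\e/2}}(K_n-\l)u\|$. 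One may simply record the estimate with $\kappa_{\e/2}$ in the source term (this is harmless for the sequel, where it is applied to eigenfunctions of $K_n$ and the source term vanishes); alternatively, to obtain $\kappa_\e$ on both sides exactly as stated, it suffices to re-run the energy identity leading to \eqref{ineq-Agmon} directly with the weight $W=\sqrt n\kappa_\e$: the only point to check is $\beta\,(\kappa_\e'(y))^2 n< nq(y)^2$ pointwise, i.e.\ $\beta\,(1-\e)^2/2<1$, which holds as soon as $\beta$ — which runs over $(2,+\infty)$ as $\alpha$ runs over $(0,1)$ in that proof — is chosen close enough to $2$. Then $nq^2-\beta W'^2=c_\e\,nq^2$ with $c_\e>0$, the set where $c_\e nq^2-\Re\l-\Im\l$ fails to be $\ge\e'\sqrt n$ is the layer $|y|\lesssim n^{-1/4}$ on which $\sqrt n\kappa_\e=\mathcal O(1)$, so its contribution is absorbed into $C\sqrt n\|u\|_{L^2(I)}^2$, and a Young inequality on $2\|e^W(K_n-\l)u\|\,\|e^W u\|$ gives the claimed form.

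The only genuine subtlety — and the reason Proposition~\ref{prop-W} is phrased with the $\e/2$ margin rather than with $\e$ itself — is the logarithmic discrepancy between $W_{n,\e/2}$ and $\sqrt n\kappa_\e$: near $0$ the truncation $(\,\cdot\,)_+$ deletes the interval $|y|\lesssim n^{-1/4}$, and on the matching layer, replacing $\sqrt{nq^2-\sqrt n(E+\e/2)}$ by $\sqrt n|q|$ costs $\int_{\sim n^{-1/4}}^{\delta}\frac{ds}{|q(s)|}\sim c\log n$. The extra multiplicative factor $\frac{1-\e/2}{1-\e}>1$ in front of $W_{n,\e/2}$ more than compensates this $\log n$ loss for $y$ bounded away from $0$, which is precisely what makes the lower bound $W_{n,\e/2}\ge \sqrt n\kappa_\e-C_\e$ hold with a constant and not a logarithm. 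Apart from keeping track of this margin, the argument is a routine splicing of the two preceding propositions.
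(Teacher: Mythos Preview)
Your approach is the same as the paper's---combine Proposition~\ref{prop-Agmon} (with parameter $\e/2$) and Proposition~\ref{prop-W}---and is correct. In fact you are more careful than the paper, whose one-line proof only invokes the lower bound $W_{n,\e/2}\ge\sqrt n\kappa_\e-C_\e$: that bound handles the left-hand side but does \emph{not} by itself control the source term $\|e^{W_{n,\e/2}}(K_n-\l)u\|$ by $\|e^{\sqrt n\kappa_\e}(K_n-\l)u\|$, since for that one needs an \emph{upper} bound on $W_{n,\e/2}$. You spot this and give two fixes: either accept $\kappa_{\e/2}$ in the source term (harmless downstream, where $(K_n-\l)u=0$), or---cleaner---rerun the energy identity directly with $W=\sqrt n\kappa_\e$, checking $\beta(1-\e)^2/2<1$ for $\beta$ close to $2$ and absorbing the layer $|y|\lesssim n^{-1/4}$ (on which $\sqrt n\kappa_\e=\mathcal O(1)$) into $C\sqrt n\|u\|^2$. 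Both are valid; the second yields the statement exactly as written.
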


\begin{proof}
If we denote by $\tilde C > 0$ the constant given by Proposition \ref{prop-Agmon}, then by Proposition \ref{prop-W} we obtain the estimate of Proposition \ref{prop-psi-L2} with $C = e^{C_\e} \tilde C$.
\end{proof}

From Proposition \ref{prop-psi-L2} we deduce the pointwise estimate \eqref{eq-Agmon-psi-n}. 

\begin{proposition} \label{prop-psi-Linf}
Let $E > 0$ and $\e \in ]0,1[$. There exists $C > 0$ such that for $n \in \N$, an eigenvalue $\m_n$ of $K_n$ with $\Re(\m_n) + \Im(\m_n) \leq E \sqrt n$ and $\psi_n \in \ker(K_n-\m_n)$, we have  
\[
\big\|e^{\sqrt n \kappa_\e} \psi_n'\big\|_{L^\infty(I)}^2 \leq C n \nr{\psi_n}_{L^2(I)}^2.
\]
\end{proposition}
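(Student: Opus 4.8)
The plan is to convert the weighted $L^2$ bound of Proposition \ref{prop-psi-L2} into a pointwise (hence boundary) bound on $e^{\sqrt n \kappa_\e}\psi_n'$ via a one-dimensional Sobolev/Agmon-type argument. First I would apply Proposition \ref{prop-psi-L2} to $u = \psi_n$ and $\l = \m_n$. Since $(K_n-\m_n)\psi_n = 0$, the last term on the right-hand side vanishes, and normalizing $\nr{\psi_n}_{L^2(I)} = 1$ we obtain
\[
\big\|e^{\sqrt n \kappa_\e} \psi_n'\big\|_{L^2(I)}^2 + \sqrt n \big\|e^{\sqrt n \kappa_\e} \psi_n\big\|_{L^2(I)}^2 \leq C \sqrt n .
\]
So the weighted $L^2$ norms of both $\psi_n$ and $\psi_n'$ are controlled by $C\sqrt n$.

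Next I would upgrade this to an $L^\infty$ bound on the weighted derivative. Set $v_n = e^{\sqrt n \kappa_\e}\psi_n'$. The natural route is to write, for $y, y_0 \in \bar I$,
\[
v_n(y)^2 = v_n(y_0)^2 + \int_{y_0}^{y} \frac{d}{ds}\big(v_n(s)^2\big)\,ds = v_n(y_0)^2 + 2\int_{y_0}^y v_n(s) v_n'(s)\,ds,
\]
then average over $y_0 \in I$ to kill the first term using the $L^2$ bound, and bound the integral by Cauchy--Schwarz. This requires controlling $v_n' = e^{\sqrt n \kappa_\e}(\psi_n'' + \sqrt n \kappa_\e' \psi_n')$. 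Here the eigenvalue equation enters: $\psi_n'' = inq(y)^2\psi_n - \m_n\psi_n$, so
\[
v_n' = e^{\sqrt n \kappa_\e}\big( inq^2 \psi_n - \m_n \psi_n + \sqrt n \kappa_\e' \psi_n'\big).
\]
Since $\kappa_\e'(y) = \frac{1-\e}{\sqrt2} q(y)$ is bounded, and $|\m_n| \lesssim \sqrt n$, and $nq^2$ is $\Oc(n)$ on $I$, one gets $\|v_n'\|_{L^2(I)} \lesssim n \|e^{\sqrt n\kappa_\e}\psi_n\|_{L^2(I)} + \sqrt n \|e^{\sqrt n \kappa_\e}\psi_n'\|_{L^2(I)} \lesssim n \cdot n^{1/4} + \sqrt n \cdot n^{1/4} \lesssim n^{5/4}$. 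Wait --- I should be more careful: from the displayed bound, $\|e^{\sqrt n\kappa_\e}\psi_n\|_{L^2}^2 \lesssim 1$, so $\|e^{\sqrt n \kappa_\e}\psi_n\|_{L^2} \lesssim 1$, and $\|e^{\sqrt n\kappa_\e}\psi_n'\|_{L^2} \lesssim n^{1/4}$. Hence $\|v_n'\|_{L^2} \lesssim n\cdot 1 + \sqrt n\cdot n^{1/4} \lesssim n$, and $\|v_n\|_{L^2}\lesssim n^{1/4}$. Then $\|v_n\|_{L^\infty}^2 \lesssim \frac{1}{|I|}\|v_n\|_{L^2}^2 + 2\|v_n\|_{L^2}\|v_n'\|_{L^2} \lesssim \sqrt n + n^{1/4}\cdot n \lesssim n^{5/4}$, which is slightly worse than the claimed $n$; the cleaner bound $\|v_n\|_{L^\infty}^2 \lesssim n$ should come from a more economical estimate, e.g. using $2|v_n v_n'| \le \lambda v_n^2 + \lambda^{-1}(v_n')^2$ with $\lambda = \sqrt n$ so that $\|v_n\|_{L^\infty}^2 \lesssim \|v_n\|_{L^2}^2\cdot\text{stuff}$...

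Let me restructure: the clean argument is to bound $\|v_n\|_{L^\infty}^2 \le \|v_n\|_{L^2(I)}^2/|I| + 2\|v_n\|_{L^2(I)}\|v_n'\|_{L^2(I)}$, and then absorb the extra power of $n$ by observing that the $\sqrt n$-factor already present in the Proposition \ref{prop-psi-L2} estimate means $\|e^{\sqrt n\kappa_\e}\psi_n\|_{L^2}^2 \lesssim 1$ (not just $\sqrt n$). Given that, $\|v_n'\|_{L^2}\lesssim n$ and $\|v_n\|_{L^2}\lesssim n^{1/4}$, so the product is $n^{5/4}$ --- so to get exactly $Cn$ one needs the interpolation-type inequality $\|w\|_{L^\infty}^2 \le C(\|w\|_{L^2}^2 + \|w\|_{L^2}\|w'\|_{L^2})$ combined with a slightly sharper bookkeeping, or one simply applies Proposition \ref{prop-psi-L2} with a slightly smaller $\e$ and accepts the power of $n$ is $n$ up to the exponential factor's slack. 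The main obstacle, then, is the bookkeeping of powers of $n$: one must be careful that the polynomial loss $n^{5/4}$ versus $n$ is harmless because the decisive factor is the exponential $e^{-\sqrt n \kappa_\e(\pm\ell_\pm)}$ and $\e$ can be taken arbitrarily small; indeed in the application \eqref{eq-Agmon-psi-n} the stated bound has a factor $Cn$ and an exponential with an arbitrary $(1-\e)$ loss, so the polynomial prefactor can be absorbed by shrinking $\e$. I would therefore prove the $L^\infty$ bound with a constant times $n$ by choosing the weight parameter in Proposition \ref{prop-psi-L2} to be $\e$ and the cutoff adjustments to give exactly the power $n$, using the economical inequality $\|v_n\|_{L^\infty}^2 \le C\sqrt n\,\|e^{\sqrt n\kappa_\e}\psi_n'\|_{L^2}^2 + C n^{-1/2}\|v_n'\|_{L^2}^2 / \sqrt n$ type splitting, i.e. balancing the Cauchy--Schwarz with a factor $\sqrt n$ so the two contributions are each $\Oc(n)$.

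To conclude, once $\|e^{\sqrt n\kappa_\e}\psi_n'\|_{L^\infty(I)}^2 \le Cn$ is established (with $\nr{\psi_n}_{L^2(I)}=1$; the general case follows by homogeneity), the statement of Proposition \ref{prop-psi-Linf} is exactly this, and the passage to \eqref{eq-Agmon-psi-n} is immediate: evaluate at $y = -\ell_-$ and $y = \ell_+$ and note $e^{\sqrt n\kappa_\e(\pm\ell_\pm)} = \exp\big(\sqrt n \tfrac{1-\e}{\sqrt2}\int_0^{\pm\ell_\pm} q\big)$, whose minimum over the two endpoints is $\exp\big(\sqrt{2n}\,\tfrac{1-\e}{2} q'(0)\cdot\tfrac{\Tmin q'(0)}{q'(0)}\big)$... more precisely $\exp\big(\tfrac{(1-\e)}{\sqrt2}\sqrt n \min(\int_0^{\ell_+}q, \int_{-\ell_-}^0|q|)\big) = \exp\big(\tfrac{1-\e}{\sqrt2}\sqrt n\, q'(0)\Tmin\big) = \exp\big(\tfrac{1-\e}{2}\sqrt{2n}\, q'(0)\Tmin\big)$, giving \eqref{eq-Agmon-psi-n} after renaming $\e$. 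The only genuinely delicate point throughout is ensuring all constants and the exponent in the weight are uniform in $n$, which is guaranteed because $\kappa_\e$ is independent of $n$ and Proposition \ref{prop-psi-L2} is uniform; I expect no further obstacle.
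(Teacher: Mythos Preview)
Your overall strategy is the same as the paper's: apply Proposition~\ref{prop-psi-L2} to get weighted $L^2$ control of $\psi_n$ and $\psi_n'$, then pass to $L^\infty$ by a one-dimensional Sobolev argument on $v_n = e^{\sqrt n \kappa_\e}\psi_n'$, computing $v_n'$ via the eigenvalue equation. The paper also uses that $\psi_n'$ vanishes somewhere on $I$ (Rolle, from the Dirichlet conditions) so that $\|v_n\|_{L^\infty}^2 \le 2\|v_n\|_{L^2}\|v_n'\|_{L^2}$ directly; your averaging trick achieves the same.

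The one substantive difference is in the power of $n$. Your ``naive'' bound $\|e^{\sqrt n\kappa_\e}\psi_n''\|_{L^2} \lesssim n\|e^{\sqrt n\kappa_\e}\psi_n\|_{L^2} \lesssim n$ leads to $\|v_n'\|_{L^2}\lesssim n$ and hence $\|v_n\|_{L^\infty}^2 \lesssim n^{1/4}\cdot n = n^{5/4}$, and you correctly identify that no amount of balancing in Cauchy--Schwarz recovers $n$ from these inputs. The paper instead proves the sharper intermediate estimate
\[
\big\|e^{\sqrt n \kappa_\e}\psi_n''\big\|_{L^2(I)}^2 \lesssim n^{3/2}\,\nr{\psi_n}_{L^2(I)}^2,
\]
by an integration by parts exploiting the identity $2\sqrt n\,\kappa_\e' = (1-\e)\sqrt n\, q$: one writes $\sqrt 2(1-\e)\|e^{\sqrt n\kappa_\e} nq^2\psi_n\|_{L^2}^2 = \int 2\sqrt n\kappa_\e'\, e^{2\sqrt n\kappa_\e} n^{3/2} q^3|\psi_n|^2$ and integrates the derivative of $e^{2\sqrt n\kappa_\e}$ back, which trades a factor $n$ for a factor $n^{1/2}$ times a derivative of $\psi_n$. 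With this, $\|v_n'\|_{L^2}\lesssim n^{3/4}$ and $\|v_n\|_{L^\infty}^2 \lesssim n^{1/4}\cdot n^{3/4} = n$, matching the statement.

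That said, the paper explicitly remarks after its proof that the exact power of $n$ is irrelevant for the application to \eqref{eq-Agmon-psi-n} and Theorem~\ref{th-main}; your $n^{5/4}$ is absorbed by the arbitrary $\e$-slack in the exponential exactly as you say. So your argument is correct and sufficient for the paper's purposes, but to prove Proposition~\ref{prop-psi-Linf} exactly as written you need the integration-by-parts refinement above rather than trying to fix it by ``applying Proposition~\ref{prop-psi-L2} with a slightly smaller $\e$'', which does not improve the polynomial prefactor.
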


\begin{proof}
By Proposition \ref{prop-psi-L2} we have 
\begin{equation} \label{eq-psi-psi'}
\big\|e^{\sqrt n \kappa_\e} \psi_n\big\|_{L^2(I)}^2 \lesssim \nr{\psi_n}_{L^2(I)}^2, \quad 
\big\|e^{\sqrt n \kappa_\e} \psi_n'\big\|_{L^2(I)}^2 \lesssim C \sqrt n \nr{\psi_n}_{L^2(I)}^2.
\end{equation}

\stepp  We prove 
\begin{equation} \label{eq-psi''}
\big\|e^{\sqrt n \kappa_\e} \psi_n''\big\|_{L^2(I)}^2 \lesssim n^{\frac 32} \nr{\psi_n}_{L^2(I)}^2.
\end{equation}
We have $\psi_n'' = inq^2 \psi_n - \mu_n \psi_n$. With \eqref{eq-psi-psi'} we get
\[
\big\| e^{\sqrt n \kappa_\e} \mu_n \psi_n \big\|_{L^2(I)}^2 \lesssim \abs{\mu_n}^2 \nr{\psi_n}_{L^2(I)}^2 \lesssim n \nr{\psi_n}_{L^2(I)}^2.
\]
For the other term we have by an integration by parts
\begin{align*}
\sqrt 2 (1-\e) \big\| e^{\sqrt n \kappa_\e} n q^2 \psi_n \big\|_{L^2(I)}^2
& = \int_I  2 \sqrt n \k_\e' e^{2 \sqrt n \kappa_\e} n^{\frac 32} q^3 \abs{\psi_n}^2 \, dy\\
& = -\int_I  e^{2 \sqrt n \kappa_\e} n^{\frac 32} \big(3 q^2 q' \abs{\psi_n}^2 + 2 q^3 \Re(\overline{\psi_n} \psi_n') \big) \, dy.
\end{align*}
On the one hand we have 
\[
\abs{\int_I  e^{2 \sqrt n \kappa_\e} n^{\frac 32} 3 q^2 q' \abs{\psi_n}^2 \, dy} \lesssim n^{\frac 32} \big\| e^{\sqrt n \kappa_\e} \psi_n \big\|_{L^2(I)}^2 \lesssim n^{\frac 32} \nr{\psi_n}_{L^2(I)}^2.
\]
On the other hand,
\begin{multline*}
\abs{\int_I  e^{2 \sqrt n \kappa_\e} n^{\frac 32} 2 q^3 \Re(\overline{\psi_n} \psi_n') \big) \, dy} \leq 2 \big\|e^{\sqrt n \kappa_\e} n q^2 \psi_n\big\|_{L^2(I)} \big\| q e^{\sqrt n \kappa_\e}  \sqrt n \psi_n'\big\|_{L^2(I)}\\
\leq (1-\e) \big\|e^{\sqrt n \kappa_\e} n q^2 \psi_n\big\|_{L^2(I)}^2 + \frac {\nr{q}_\infty^2}{1-\e} \big\|e^{\sqrt n \kappa_\e}  \sqrt n \psi_n'\big\|_{L^2(I)}^2.
\end{multline*}
This gives \eqref{eq-psi''}.

\stepp 
Since $\psi_n'$ vanishes on $I$ (we could also use the general trace Theorem) we have by \eqref{eq-psi-psi'} and \eqref{eq-psi''}
\begin{align*}
\big\|e^{\sqrt n \kappa_\e} \psi_n'\big\|_{L^\infty(I)}^2 
& \leq 2 \big\|e^{\sqrt n \kappa_\e} \psi_n'\big\|_{L^2(I)} \big\|\big(e^{\sqrt n \kappa_\e} \psi_n'\big)'\big\|_{L^2(I)}\\
& \lesssim n^{\frac 14} \nr{\psi_n}_{L^2(I)} \left( \big\| \sqrt n \kappa_\e' e^{\sqrt n \kappa_\e} \psi_n'\big\|_{L^2(I)} + \big\|e^{\sqrt n \kappa_\e} \psi_n''\big\|_{L^2(I)} \right)\\
& \lesssim n \nr{\psi_n}_{L^2(I)}^2.
\end{align*} 
This completes the proof.
\end{proof}

Notice that \eqref{eq-psi''} is better that the naive estimate obtained from \eqref{eq-psi-psi'} and the expression of $\psi_n''$. In fact we do not have to be optimal here, since the power of $n$ in the right-hand side of \eqref{eq-Agmon-psi-n} is not important for the proof of the second part of Theorem \ref{th-main}.

\section{The Observability estimate in small time} \label{sec-Carleman}

In this section we prove Propositions \ref{prop-obs-fix-n} (see Paragraph \ref{sec-obs-fixed-n}) and \ref{prop-obs-large-n} (see Paragraph \ref{sec-observability-fin}). The proofs rely on some Carleman estimates and the construction of a suitable weight function.

In this section we will not use an index $n$ for a solution $u$ of \eqref{eq-Kolmogorov-n}. No confusion will be possible since we will never consider a solution of the initial $x$-dependent problem \eqref{eq-Kolmogorov}. Moreover, we use an index for the partial derivatives, so $u_t$ stands for $\partial_t u$, $u_{yy}$ for $\partial_{yy} u$, etc.

\subsection{A generic Carleman estimate}

We begin our analysis with a generic Carleman estimate. In the following statement, $\vf$ is a Carleman weight function. It will be applied to $w = e^{-\vf} u$, where $u$ is a solution of a problem of the form \eqref{eq-Kolmogorov-n}, possibly with a source term (see \eqref{eq-Kolmogorov-f} below). We also impose that $w$ vanishes at initial and final times.\\

\begin{proposition} \label{prop-Carleman}
Let $n \in \N$, $\tau_1,\tau_2 > 0$ with $\tau_1<\tau_2$, $a,b \in \R$ with $a < b$, and $g \in L^2 (]\tau_1,\tau_2[\times ]a,b[)$. Let $\phi \in C^4(]\tau_1,\tau_2[ \times [a,b],\R_+)$. We consider $w \in C^0([\tau_1,\tau_2],H^2(a,b)) \cap C^1([\tau_1,\tau_2],L^2(a,b))$ such that
 \begin{equation} \label{eq-w}
w_t - w_{yy} + inq(y)^2 w +  \phi_t w - 2  \phi_y w_y -  \phi_y^2 w -  \phi_{yy} w = g.
\end{equation}
We assume that $w$ also satisfies the Dirichlet boundary condition
\begin{equation} \label{eq-w-Dir}
\forall t \in ]\tau_1,\tau_2[, \quad w(t,a) = w(t,b) = 0,
\end{equation}
and the initial and final conditions
\begin{equation} \label{eq-w-0T}
\forall y \in ]a,b[, \quad w(\tau_1,y) = w(\tau_2,y) = 0, \quad w_y(\tau_1,y) = w_y(\tau_2,y) = 0.
\end{equation}
Then we have 
\begin{equation*} 
\int_{\tau_1}^{\tau_2} \int_a^b \big(   \Phi_0 \abs w^2  +   \Phi_1 \abs{w_y}^2 \big) \, dy \, dt
\leq -  \int_{\tau_1}^{\tau_2} \big[ \phi_y \abs{w_y}^2 \big]_a^b \, dt +  \frac 1 2 \int_{\tau_1}^{\tau_2} \int_a^b \abs{g}^2 \, dy \, dt ,
\end{equation*}
where
\begin{equation} \label{def-Phi0}
\Phi_0 = - 2 \phi_y^2 \phi_{yy}  - \frac {\phi_{tt}}{2} + \frac {\phi_{yyyy}}{2} + 2\phi_{ty} \phi_{y} - \frac {n^{\frac{3}{2}}q^2 q'}{\sqrt 2} 
\end{equation}
and 
\begin{equation} \label{def-Phi1}
\Phi_1 = - 2 \phi_{yy} - \sqrt {2n} q'.
\end{equation}
\end{proposition}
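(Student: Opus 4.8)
The statement is a Carleman-type estimate, so the natural strategy is to split the operator acting on $w$ into a "symmetric" part and an "antisymmetric" part, integrate the square of the equation (or rather a suitable bilinear pairing), and use the boundary and endpoint conditions to move all derivatives around via integration by parts until only the claimed good terms $\Phi_0|w|^2 + \Phi_1|w_y|^2$ and the boundary term $-[\phi_y|w_y|^2]_a^b$ survive, with everything else absorbed or dominated by $\frac12\int\int|g|^2$. Concretely, I would first rewrite \eqref{eq-w} as $Pw = g$ where $P = P_s + P_a$, grouping into $P_a w$ (the terms that are antisymmetric for the $L^2(\,]a,b[\,)$ inner product at fixed $t$, together with the part of $\partial_t$ and the imaginary potential) and $P_s w$ the rest. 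The precise splitting is the crucial design choice: the imaginary potential $inq^2 w$ has no real part but interacts with the real terms through the cross-products, and the factor $\frac{n^{3/2}q^2q'}{\sqrt2}$ and $\sqrt{2n}q'$ in $\Phi_0,\Phi_1$ strongly suggest that one should not simply take real/imaginary parts but rather add the real and imaginary parts of the squared-norm identity (exactly as in the Agmon computation of Proposition \ref{prop-Agmon}, where $\Re Q_n + \Im Q_n$ is the quantity that becomes coercive).

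**Key steps.** (1) Write $g = Pw$, take $\int_{\tau_1}^{\tau_2}\int_a^b$ of $|Pw|^2 = |P_sw|^2 + |P_aw|^2 + 2\Re\langle P_sw, P_aw\rangle$, or more economically work directly with $\Re\langle Pw, \Lambda w\rangle$ for a well-chosen multiplier $\Lambda w$ — given the shape of the answer I expect the relevant computation to be close to pairing the equation against $\bar w$ and against $\bar w_{yy}$ (and their $t$-integrals), then adding. (2) Integrate by parts in $y$, systematically using \eqref{eq-w-Dir} to kill boundary contributions of $w$ itself, and in $t$, using \eqref{eq-w-0T} to kill all endpoint contributions (this is why one needs $w$ and $w_y$ to vanish at $\tau_1,\tau_2$). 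The only boundary term that cannot be eliminated is the one involving $w_y$ at $a$ and $b$ weighted by $\phi_y$, which is precisely $-[\phi_y|w_y|^2]_a^b$; I would keep careful track of its sign. (3) Collect the coefficients of $|w|^2$ and of $|w_y|^2$ in the resulting volume integral: the $|w_y|^2$ coefficient should assemble into $-2\phi_{yy} - \sqrt{2n}q' = \Phi_1$ (the $\sqrt{2n}q'$ coming from the cross term between $-w_{yy}$ and $inq^2w$ after an integration by parts, analogously to the $n\|qe^Wu\|^2$ term in the Agmon proof), and the $|w|^2$ coefficient should assemble into the long expression \eqref{def-Phi0}, where $-2\phi_y^2\phi_{yy}$, $-\phi_{tt}/2$, $\phi_{yyyy}/2$, $2\phi_{ty}\phi_y$ come from the Carleman conjugation terms $\phi_tw, -2\phi_yw_y, -\phi_y^2w, -\phi_{yy}w$ in \eqref{eq-w}, and $-n^{3/2}q^2q'/\sqrt2$ again from the $q$–potential cross term. (4) Finally, estimate the one remaining "bad" cross term (the pairing of $g$ against the multiplier) by Cauchy–Schwarz, $2\Re\langle g, \text{mult}\rangle \le \frac12|g|^2 + 2|\text{mult}|^2$, choosing the splitting so that the $2|\text{mult}|^2$ piece is exactly what was already produced on the left, leaving the clean $\frac12\int\int|g|^2$.

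**Main obstacle.** The delicate point is the precise bookkeeping of the cross terms between the imaginary potential $inq(y)^2 w$ and the real second-order and first-order terms after integration by parts: these are what produce the $O(n^{3/2})$ term in $\Phi_0$ and the $O(\sqrt n)$ term in $\Phi_1$, and getting the constants ($1/\sqrt2$, $\sqrt2$) and signs exactly right requires care — in particular one has to integrate by parts the $t$-derivative term against $q^2|w|^2$ or similar, producing $q^2q'$ via $\partial_y(q^3)$ much as in the proof of \eqref{eq-psi''}. A secondary technical issue is that $w$ is only assumed $C^0([\tau_1,\tau_2],H^2)\cap C^1([\tau_1,\tau_2],L^2)$, so strictly speaking one should first establish the identity for smooth $w$ (or justify the integrations by parts in $t$ by a density/regularization argument) before passing to the general case; I would either invoke the regularizing effect already used for \eqref{eq-Kolmogorov-n}, or simply note that all integrals involved are finite under the stated regularity and the endpoint terms vanish by \eqref{eq-w-0T}. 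Everything else is a routine — if lengthy — integration-by-parts computation with the explicit weight $\phi$.
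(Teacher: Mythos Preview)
Your plan is correct and follows essentially the same route as the paper. The paper's splitting is
\[
\underbrace{(-w_{yy} + (\phi_t - \phi_y^2)w)}_{=:\alpha} \;+\; \underbrace{(w_t - 2\phi_y w_y - \phi_{yy}w + inq^2 w)}_{=:\beta} \;=\; g,
\]
and the starting inequality is simply $2\Re\int\!\!\int \alpha\bar\beta \le \int\!\!\int|g|^2$, which already produces the $\tfrac12|g|^2$ term before any integration by parts; this is your step (4), just placed first. Each piece of $\Re(\alpha\bar\beta)$ is then integrated by parts exactly as you describe, using \eqref{eq-w-Dir} and \eqref{eq-w-0T}.

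The one point you left slightly vague is the origin of the $n$-dependent coefficients: both the $-\sqrt{2n}\,q'$ in $\Phi_1$ and the $-n^{3/2}q^2q'/\sqrt2$ in $\Phi_0$ come from the \emph{same} cross term $\Re\int(-w_{yy})(-inq^2\bar w) = 2n\int qq'\,\Im(w_y\bar w)$, which is handled by a single Young inequality
\[
2n\,qq'\,\Im(w_y\bar w) \ge -\sqrt{2n}\,q'|w_y|^2 - \frac{n^{3/2}q^2q'}{\sqrt 2}|w|^2.
\]
There is no need to add real and imaginary parts \`a la Agmon; taking the real part of $\alpha\bar\beta$ suffices.
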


\begin{proof}
We can rewrite \eqref{eq-w} as
\begin{equation*} \label{eq-g}
\big( - w_{yy} + \Phi w \big) + \big( w_t - 2  \phi_y w_y -  \phi_{yy} w + i n q^2 w \big) =  g, 
\end{equation*}
where $\Phi  = \phi_t -  \phi_y^2$. 
The identity $2\Re(\a\overline\b)\leq \abs{\a+\b}^2$ then gives, after integration,
\begin{multline} \label{ineq-w-g}
\Re \int_{\tau_1}^{\tau_2} \int_a^b \big( - w_{yy} + \Phi w \big) \big(   \overline{w_t} - 2  \phi_y \overline{w_y} - \phi_{yy} \overline{w}  - i n q^2 \overline{w} \big) \, dy \, dt\\
\leq \frac 12 \int_{\tau_1}^{\tau_2} \int_a^b |g|^2 \, dy \, dt.
\end{multline}
We estimate the left-hand side with integrations by parts, using \eqref{eq-w-Dir} and \eqref{eq-w-0T}. The terms involving $\overline{w_t}$ give
\[
\Re \int_{\tau_1}^{\tau_2}\int_a^b (-w_{yy}) \overline{w_t}  \, dy \, dt = 0
\]
and 
\[
\Re \int_{\tau_1}^{\tau_2} \int_a^b (\Phi w) \overline{w_t}  \, dy \, dt = - \frac 1 2 \int_{\tau_1}^{\tau_2} \int_a^b \Phi_t \abs{w}^2 \, dy \, dt.
\]
On the other hand, for all $t \in ]\tau_1,\tau_2[$ we have 
\begin{align*}
\Re \int_a^b (-w_{yy}) (-2  \phi_y \overline{w_{y}})  \, dy &=  \big[ \phi_y \abs{w_y}^2 \big]_a^b -   \int_a^b \phi_{yy} \abs{w_{y}}^2 \, dy,\\
\Re \int_a^b (-w_{yy}) (-  \phi_{yy} \overline{w})  \, dy &= - \int_a^b \phi_{yy} \abs{w_{y}}^2 \, dy + \frac {1} 2 \int_a^b \phi_{yyyy} \abs{w}^2 \, dy,\\
\Re \int_a^b (-w_{yy}) (-inq^2 \overline{w})  \, dy &= 2n \int_a^b  qq' \Im( w_y \overline w) \, dy,
\end{align*}
and
\begin{align*}
\Re \int_a^b (\Phi w) (-2  \phi_y \overline{w_{y}}-  \phi_{yy} \overline{w})  \, dy &=    \int_a^b \Phi_y \phi_y \abs w^2 \, dy,\\
\Re \int_a^b (\Phi w) (-inq^2 \overline{w})  \, dy &= 0.
\end{align*}
We integrate these five equalities over $t \in ]\tau_1,\tau_2[$, and then \eqref{ineq-w-g} gives 
\begin{multline*}
 \int_{\tau_1}^{\tau_2} \big[ \phi_y \abs{w_y}^2 \big]_a^b \, dt + \int_{\tau_1}^{\tau_2} \int_a^b  \left(- \frac {\Phi_t}2 + \frac { \phi_{yyyy}}2 + \Phi_y \phi_y \right) \abs w^2 \, dy \, dt  \\
 - 2 \int_{\tau_1}^{\tau_2} \int_a^b  \phi_{yy} \abs{w_y}^2 \, dy \, dt + 2n \int_{\tau_1}^{\tau_2} \int_a^b  qq' \Im(w_y \overline w)   \, dy \, dt  \leq \frac 12 \int_{\tau_1}^{\tau_2} \int_a^b |g|^2 \, dy \, dt ,
\end{multline*}
Since 
\[
2n qq' \Im(w_y \overline w) \geq - \sqrt 2 \sqrt n q' \abs{w_y}^2 - \frac { n^{\frac 32}  q^2 q' \abs w^2} {\sqrt 2},
\]
the conclusion follows.
\end{proof}

\subsection{Observability inequality for a fixed Fourier parameter} \label{sec-obs-fixed-n}

In this paragraph we prove Proposition \ref{prop-obs-fix-n} about observability for a fixed Fourier parameter $n \in \N$. As already said, this is nothing but the well-known observability inequality for a heat equation with a (complex) potential. Nevertheless, we
propose a proof here, both for the sake of self-containment, and because we believe it enlightens the following paragraph.

The proof of Proposition \ref{prop-obs-fix-n} relies on Proposition \ref{prop-Carleman}. For the time dependence of the weight $\vf$, we will use the function $\th$ given in the following lemma.

\begin{lemma} \label{lemma_def_theta}
Let $\tau_1,\tau_2 > 0$ with $\tau_1<\tau_2$. There exists $\theta$ in $C^\infty(]\tau_1,\tau_2[)$ such that
\begin{enumerate} [\rm (i)]
\item $\theta \geq 1$ on $]\tau_1,\tau_2[$, $\theta \equiv 1$ on $\big[\frac{2\tau_1 + \tau_2}{3},\frac{\tau_1 + 2\tau_2}{3} \big]$,
\item $\lim_{t \rightarrow \tau_1} \theta(t) = \lim_{t \rightarrow \tau_2} \theta(t)= + \infty$,
\item there exists a constant $C>0$ such that for all $t \in ]\tau_1,\tau_2[$,
$$
\vert \theta'(t) \vert \leq C \theta(t)^2, \quad \vert \theta''(t) \vert \leq C \theta(t)^3.
$$
\end{enumerate}
\end{lemma}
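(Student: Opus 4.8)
\emph{Proof sketch.} The plan is to construct $\theta$ explicitly in the additive form $\theta = 1 + \theta_- + \theta_+$, where $\theta_-$ carries the blow-up at $\tau_1$, $\theta_+$ carries the blow-up at $\tau_2$, and both are non-negative and supported away from the middle third $[m_1,m_2]$, with $m_1 = \frac{2\tau_1+\tau_2}{3}$ and $m_2 = \frac{\tau_1+2\tau_2}{3}$. I would fix once and for all a cutoff $\chi \in C^\infty(\R,[0,1])$ with $\chi \equiv 1$ on $]-\infty,1/2]$ and $\chi \equiv 0$ on $[1,+\infty[$, and set, for $t \in ]\tau_1,\tau_2[$,
\[
\theta_-(t) = \chi\!\left(\frac{t-\tau_1}{m_1-\tau_1}\right)\frac{1}{t-\tau_1}, \qquad \theta_+(t) = \theta_-(\tau_1+\tau_2-t).
\]
Since $\chi$ together with all its derivatives vanishes at the point $1$, the function $\theta_-$ extends smoothly by $0$ on $[m_1,\tau_2[$; symmetrically $\theta_+$ is smooth, non-negative and vanishes on $]\tau_1,m_2]$. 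Hence $\theta := 1 + \theta_- + \theta_+$ is well-defined, smooth and $\geq 1$ on $]\tau_1,\tau_2[$, it equals $1$ on $[m_1,m_2]$ because both bumps vanish there, and it tends to $+\infty$ at the endpoints since $\theta_-(t) \sim (t-\tau_1)^{-1}$ near $\tau_1$ and $\theta_+(t) \sim (\tau_2-t)^{-1}$ near $\tau_2$. This settles (i) and (ii).

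For (iii), I would use the symmetry $\theta(t) = \theta(\tau_1+\tau_2-t)$ to reduce to the interval $]\tau_1,m_2]$, on which $\theta_+ \equiv 0$ and thus $\theta = 1 + \theta_-$. On the compact part $[\tfrac{\tau_1+m_1}{2},m_2]$ the function $\theta$ and its first two derivatives are bounded while $\theta \geq 1$, so the two inequalities hold there with a suitable constant. On $]\tau_1,\tfrac{\tau_1+m_1}{2}]$ the cutoff equals $1$, hence $\theta(t) = 1 + (t-\tau_1)^{-1}$, and then $|\theta'(t)| = (t-\tau_1)^{-2} \leq \theta(t)^2$ and $|\theta''(t)| = 2(t-\tau_1)^{-3} \leq 2\,\theta(t)^3$ follow at once from $\theta(t) \geq (t-\tau_1)^{-1}$. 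Taking $C$ to be the largest of the finitely many constants so obtained finishes the proof.

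The choice of the singular profile $(t-\tau_1)^{-1}$ is really the crucial point: for $\theta \sim (t-\tau_1)^{-\alpha}$ one has $|\theta'| \sim \theta^{1+1/\alpha}$ and $|\theta''| \sim \theta^{1+2/\alpha}$, so the required bounds $|\theta'| \lesssim \theta^2$, $|\theta''| \lesssim \theta^3$ force $\alpha \geq 1$, and $\alpha = 1$ is the slowest admissible blow-up (and it conveniently satisfies $\theta' = -\theta^2$ up to lower-order terms). The only technical care needed is to keep $\theta \geq 1$ throughout the gluing region, which is exactly why I would add the singular bumps to the constant function $1$ rather than interpolate between $1$ and a singular function; beyond that I do not expect any genuine obstacle in this lemma.
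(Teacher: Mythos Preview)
Your construction is correct and follows essentially the same approach as the paper: add to the constant function $1$ a non-negative smooth bump, cut off to vanish on the middle third, that blows up like $1/\text{distance}$ at each endpoint. The only cosmetic difference is that the paper packages both singularities into a single expression $\theta(t) = 1 + (1-\chi(t))/\big((t-\tau_1)(\tau_2-t)\big)$ with one cutoff $\chi$ equal to $1$ on the middle third, whereas you split them into two symmetric pieces $\theta_-$ and $\theta_+$; the verification of (iii) proceeds the same way in both cases.
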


\begin{proof}
Let $\chi \in C_0^\infty \big( ]\tau_1,\tau_2[,[0,1] \big)$ be equal to 1 on $\big[\frac{2\tau_1 + \tau_2}{3},\frac{\tau_1 + 2\tau_2}{3} \big]$. For $t \in ]\tau_1,\tau_2[$ we set 
$$
\theta (t) = 1 + \frac{1 - \chi(t)}{(t-\tau_1)(\tau_2-t)}.
$$
Then $\theta$ verifies all the required properties.
\end{proof}

Now we can prove Proposition \ref{prop-obs-fix-n}:

\begin{proof}[Proof of Proposition \ref{prop-obs-fix-n}]
For $y \in \bar I$ we set 
\[
\psi(y) = \psi_1 \left( \frac {2y + \ell_- - \ell_+}{\ell_- + \ell_+} \right), \quad \text{where} \quad \psi_1(\eta) = - \frac {\eta^2} 2 \pm 2 \eta + 3, \quad \eta\in[-1,1]
\]
(the sign in front of $2\eta$ is not important here, but it has to be chosen carefully if we only observe from one side of the boundary, as will be the case in Proposition \ref{prop-carlemen-theta-psi} below).
In particular, for some $c_0 > 0$ we have on $\bar I$
\begin{equation} \label{eq-prop-psi}
\psi'' \leq - c_0, \quad \abs{\psi'} \geq c_0, \quad \psi \geq c_0.
\end{equation}

Let $u$ be a solution of \eqref{eq-Kolmogorov-n}. Let $s >1$ to be chosen large enough later. For $t \in ]\tau_1,\tau_2[$ and $y \in \bar I$ we set
$$
\phi(t,y) = s \,  \theta(t) \psi (y), 
$$
where $\theta$ is given by Lemma \ref{lemma_def_theta}, and
\[
w(t,y) = u(t,y) e^{-\phi(t,y)}.
\]
Then $w$ satisfies \eqref{eq-w}-\eqref{eq-w-0T} with $a = -\ell_-$, $b = \ell_+$ and $g \equiv 0$. Therefore,
Proposition \ref{prop-Carleman} gives
$$
\int_{\tau_1}^{\tau_2} \int_I \big(   \Phi_0 \abs w^2  +   \Phi_1 \abs{w_y}^2 \big) \, dy \, dt
\leq -  \int_{\tau_1}^{\tau_2} \big[ \phi_y \abs{w_y}^2 \big]_{-\ell_-}^{\ell_+} \, dt,
$$
with 
\begin{equation} \label{eq_Phi0_fixedn}
\Phi_0  =  s^3 \left(  - 2 \theta^3 (\psi')^2 \psi''  - \frac {\theta'' \psi}{2\,s^2} + \frac {\theta\, \psi^{(4)}}{2\,s^2} + \frac{2\theta' \theta (\psi')^2}{s}  - \frac{n^{\frac{3}{2}}q^2 q'}{s^3\sqrt{2}}  \right)
\end{equation}
and
\begin{equation} \label{eq_Phi1_fixedn}
\Phi_1 = s \left( - 2 \theta \psi''-  \frac{\sqrt {2n} q'}{s}\right).
\end{equation}
Thus, by Lemma \ref{lemma_def_theta} and \eqref{eq-prop-psi} we can fix $s$ so large that $\Phi_0 \geq 1$ and $\Phi_1 \geq 1$ on $]\tau_1,\tau_2[ \times \bar I$. This gives
$$
\int_{\tau_1}^{\tau_2} \int_{I}  \vert w(t,y) \vert^2  \, dy\, dt \lesssim \int_{\tau_1}^{\tau_2} \big( \vert w_y(t,-\ell_-) \vert^2 + \vert w_y(t,\ell_+) \vert^2 \big) \, dt,
$$
and then, since $\theta \equiv 1$ on $\big[\frac{2\tau_1 + \tau_2}{3},\frac{\tau_1 + 2\tau_2}{3} \big]$ and $\psi$ is bounded away from 0,
$$
\int_{\frac{2\tau_1 + \tau_2}{3}}^{\frac{\tau_1 + 2\tau_2}{3}} \int_{I}  \vert u(t,y) \vert^2 \, dy\, dt \lesssim \int_{\tau_1}^{\tau_2} \big( \vert u_y(t,-\ell_-) \vert^2 + \vert u_y(t,\ell_+) \vert^2 \big) \, dt.
$$
we have $\nr{u(T)}_{L^2(I)}^2 \leq \nr{u(t)}_{L^2(I)}^2$ for all $t \in \big[\frac{2\tau_1 + \tau_2}{3},\frac{\tau_1 + 2\tau_2}{3} \big]$. After integration this gives
$$
\nr{u(T)}_{L^2(I)}^2 \leq \frac{3}{\tau_2 - \tau_1} \int_{\frac{2\tau_1 + \tau_2}{3}}^{\frac{\tau_1 + 2\tau_2}{3}} \nr{u(t)}_{L^2(I)}^2 \, dt  \lesssim \int_{\tau_1}^{\tau_2} \big( \vert u_y(t,-\ell_-) \vert^2 + \vert u_y(t,\ell_+) \vert^2 \big) \, dt,
$$
which ends the proof.
\end{proof}

Notice that in this rough proof we have not tried to control the dependence of $C_n$ with respect to $n$. It is the purpose of the next paragraph to get a precise estimate of the cost of observability for \eqref{eq-Kolmogorov-n}. The interest of Proposition \ref{prop-obs-fix-n} is that it is now enough to consider only large values of $n$.

To obtain estimates in the high frequency regime, we will use the same strategy, but we will choose more carefully the parameter $s$ and the phase function $\psi$ (both should be chosen as small as possible).

From \eqref{eq_Phi0_fixedn}, we see that $s^3$ has to be at least of order $n^{\frac{3}{2}}$, while in \eqref{eq_Phi1_fixedn}, $s$
has to be of order $\sqrt n$. From these observations, we deduce that the correct scaling should be $s \sim \sqrt n$.

Finally, with $s = \sqrt n$, it is then the choice of $\psi$ that will make $\Phi_0$ and $\Phi_1$ positive for $n$ large enough.
We see from \eqref{eq_Phi0_fixedn}-\eqref{eq_Phi1_fixedn} that $\psi$ should satisfy
\begin{equation} \label{eq-hyp-psi}
-2 (\psi')^2 \psi'' - \frac{q^2q'}{\sqrt{2}} > 0 \quad \text{and} \quad  - 2\,\psi'' -  \sqrt{2} q' >0.
\end{equation}
This leads to the construction of the weight function given in the next paragraph.

\subsection{A refined Carleman estimate}

In this paragraph we prove a refined version of Proposition \ref{prop-Carleman} for $n$ large and a suitable choice for $\psi$. As discussed at the end of Paragraph \ref{sec-obs-fixed-n}, we will choose $\vf$ proportional to $\sqrt n$. The choice of $\psi$ satisfying \eqref{eq-hyp-psi} will be discussed in Proposition \ref{prop-def-phi}.

\begin{proposition} \label{prop-carlemen-theta-psi}
Let $a,b \in \bar I$ with $a < b$ and $\psi \in C^4([a,b],\R)$. We assume that for some $\e > 0$ we have on $[a,b]$:
\[
\psi \geq \e,  \quad  - 2 (\psi')^2 \psi'' - \frac {q^2 q'}{\sqrt 2} \geq \e , \quad - 2 \psi'' - \sqrt 2 q' \geq \e.
\]
Let $\tau_1,\tau_2 \in ]0,T]$ with $\tau_1<\tau_2$. For $t \in ]\tau_1,\tau_2[$ and $y \in [a,b]$ we set $\varphi(t,y) = \theta(t) \psi(y)$, where $\theta$ is given by Lemma \ref{lemma_def_theta}. Let $n \in \N$ and $u$ in
\begin{equation} \label{eq-C0-C1}
C^0\big([\tau_1,\tau_2],H^2(a,b) \cap H_0^1(a,b)\big) \cap C^1\big([\tau_1,\tau_2],L^2(a,b)\big).
\end{equation}
We set
\begin{equation} \label{eq-Kolmogorov-f}
f = u_t - u_{yy} + inq(y)^2 u,
\end{equation}
and
\begin{equation*}
w = u e^{-\sqrt n \f}, \quad g = f e^{-\sqrt n \f}.
\end{equation*}
Then there exist $N \in \N$ and $C>0$ such that the following statements hold if $n \geq N$.
\begin{enumerate}[\rm (i)]
\item If $\psi' > 0$, 
\begin{multline*} 
\int_{\tau_1}^{\tau_2} \int_a^b \big( n^{\frac 32} \th^3 \abs w^2  + \sqrt n \th \abs{w_y}^2 \big)  \, dy \, dt\\
\leq C \sqrt n \int_{\tau_1}^{\tau_2} \abs{w_y(t,a)}^2  \, dt +  C \int_{\tau_1}^{\tau_2} \int_a^b \abs g^2 \, dy \, dt.
\end{multline*}
\item If $\psi' < 0$, 
\begin{multline*} 
\int_{\tau_1}^{\tau_2} \int_a^b \big( n^{\frac 32} \th^3 \abs w^2  + \sqrt n \th \abs{w_y}^2 \big) \, dy \, dt\\
\leq C \sqrt n \int_{\tau_1}^{\tau_2} \abs{w_y(t,b)}^2  \, dt +  C \int_{\tau_1}^{\tau_2} \int_a^b \abs g^2  \, dy \, dt.
\end{multline*}
\end{enumerate}
\end{proposition}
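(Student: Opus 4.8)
The plan is to apply the generic Carleman estimate of Proposition~\ref{prop-Carleman} to $w=ue^{-\sqrt n\varphi}$, with weight $\phi=\sqrt n\varphi=\sqrt n\,\theta\psi$ and source $g=fe^{-\sqrt n\varphi}$, and then to turn the lower-order weights $\Phi_0,\Phi_1$ into coercive quantities using the three structural hypotheses on $\psi$ and the largeness of $n$.

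First I would check that Proposition~\ref{prop-Carleman} applies with this data. A direct computation (expanding $w_t$ and $w_{yy}$ in terms of $u$ and the derivatives of $\phi$, using \eqref{eq-Kolmogorov-f}) shows that $w$ satisfies \eqref{eq-w} with source $g$; the Dirichlet condition \eqref{eq-w-Dir} is inherited from $u(t,\cdot)\in H^1_0(a,b)$; and the initial and final conditions \eqref{eq-w-0T} follow from Lemma~\ref{lemma_def_theta}: since $\psi\ge\varepsilon>0$, $\theta(t)\to+\infty$ as $t\to\tau_1^+,\tau_2^-$ and $|\theta'|\lesssim\theta^2$, the factor $e^{-\sqrt n\theta\psi}$ together with its $t$- and $y$-derivatives vanishes at $t=\tau_1,\tau_2$, while $u,u_y,u_t$ stay bounded, so $w$ and $w_y$ extend by $0$ and $w$ has the regularity required in Proposition~\ref{prop-Carleman}.

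The heart of the matter is the coercivity of $\Phi_0,\Phi_1$ from \eqref{def-Phi0}--\eqref{def-Phi1} for $\phi=\sqrt n\,\theta\psi$ and $n$ large. Computing the derivatives of $\phi$ gives
\[
\Phi_1=\sqrt n\bigl(-2\theta\psi''-\sqrt2\,q'\bigr),\qquad
\Phi_0=n^{3/2}\theta^3\Bigl(-2(\psi')^2\psi''-\tfrac{q^2q'}{\sqrt2\,\theta^3}\Bigr)
+2n\,\theta\theta'(\psi')^2-\tfrac{\sqrt n}{2}\,\theta''\psi+\tfrac{\sqrt n}{2}\,\theta\psi^{(4)}.
\]
The hypothesis $-2(\psi')^2\psi''-q^2q'/\sqrt2\ge\varepsilon>0$ forces $\psi''<0$ on $[a,b]$; hence, using $\theta\ge1$ and $q'>0$, one has $-2\theta\psi''-\sqrt2\,q'=\theta(-2\psi'')-\sqrt2\,q'\ge\theta\bigl(-2\psi''-\sqrt2\,q'\bigr)\ge\varepsilon\theta$, so $\Phi_1\ge\varepsilon\sqrt n\,\theta$. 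For $\Phi_0$, again $\theta\ge1$ and $q^2q'\ge0$ give that the bracket in the leading term is $\ge-2(\psi')^2\psi''-q^2q'/\sqrt2\ge\varepsilon$, so this term is $\ge\varepsilon n^{3/2}\theta^3$, whereas by $|\theta'|\lesssim\theta^2$, $|\theta''|\lesssim\theta^3$ and the boundedness of $\psi,\psi',\psi'',\psi^{(4)}$ the three remaining terms are $O(n\theta^3)$, which is negligible compared with $n^{3/2}\theta^3$. Thus there is $N$ such that $\Phi_0\ge\tfrac\varepsilon2 n^{3/2}\theta^3$ and $\Phi_1\ge\varepsilon\sqrt n\,\theta$ on $]\tau_1,\tau_2[\times[a,b]$ whenever $n\ge N$, and Proposition~\ref{prop-Carleman} yields
\[
\int_{\tau_1}^{\tau_2}\!\!\int_a^b\Bigl(\tfrac\varepsilon2 n^{3/2}\theta^3|w|^2+\varepsilon\sqrt n\,\theta|w_y|^2\Bigr)dy\,dt
\le-\int_{\tau_1}^{\tau_2}\bigl[\phi_y|w_y|^2\bigr]_a^b\,dt+\tfrac12\int_{\tau_1}^{\tau_2}\!\!\int_a^b|g|^2\,dy\,dt.
\]

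It remains to treat the boundary term. Here $\phi_y=\sqrt n\,\theta\psi'$, so $-[\phi_y|w_y|^2]_a^b=\sqrt n\,\theta(t)\bigl(\psi'(a)|w_y(t,a)|^2-\psi'(b)|w_y(t,b)|^2\bigr)$. In case~(i), $\psi'>0$, so the contribution at $b$ is $\le0$ and is discarded; only $\sqrt n\,\theta(t)\psi'(a)|w_y(t,a)|^2\ge0$ survives, and since $u(t,a)=0$ we have $w_y(t,a)=u_y(t,a)e^{-\sqrt n\theta(t)\psi(a)}$. Because $\psi(a)\ge\varepsilon>0$ and $\theta\ge1$, the growing factor $\theta(t)$ is harmless for $n$ large (it is dominated by the exponential, via $s\,e^{-cs}\lesssim1$ for $s\ge1$, $c\ge1$), so this term is controlled by $C\sqrt n\int_{\tau_1}^{\tau_2}|w_y(t,a)|^2\,dt$; dividing by $\varepsilon/2$ gives~(i). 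Case~(ii) is symmetric, the sign $\psi'<0$ moving the surviving contribution to $y=b$. The delicate point, and the main obstacle, is the coercivity estimate: one has to choose the scaling $\phi\propto\sqrt n$ precisely so that, after factoring $\theta^3$ out of $\Phi_0$ and $\theta$ out of $\Phi_1$, the structurally positive combinations of $\psi$ dominate both the Kolmogorov contributions $n^{3/2}q^2q'/\sqrt2$, $\sqrt{2n}\,q'$ (exactly what the two nonlinear inequalities on $\psi$ encode) and the lower-order terms generated by the $t$-derivatives of $\theta$.
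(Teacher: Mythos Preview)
Your proof is essentially the paper's: apply Proposition~\ref{prop-Carleman} with $\phi=\sqrt n\,\theta\psi$, use the three structural inequalities on $\psi$ together with $\theta\ge1$, $|\theta'|\lesssim\theta^2$, $|\theta''|\lesssim\theta^3$ to force $\Phi_0\ge\tfrac{\e}{2}n^{3/2}\theta^3$ and $\Phi_1\ge\e\sqrt n\,\theta$ for $n$ large, and then discard the favorable boundary endpoint according to the sign of $\psi'$.

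One small remark on your treatment of the surviving boundary term. The paper simply drops the favorable endpoint and asserts the stated inequality, without commenting on the factor $\theta(t)$ in $\phi_y(t,a)=\sqrt n\,\theta(t)\psi'(a)$. You try to absorb this $\theta(t)$ via $s\,e^{-cs}\lesssim1$, but note that this bound gives
\[
\theta(t)\,|w_y(t,a)|^2=\theta(t)\,e^{-2\sqrt n\,\theta(t)\psi(a)}|u_y(t,a)|^2\lesssim |u_y(t,a)|^2,
\]
i.e.\ a control by $|u_y(t,a)|^2$, not by $|w_y(t,a)|^2$ as you write (the latter would need $\theta$ bounded, which it is not). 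This is a harmless imprecision: the paper's own proof glosses over the same point, and in the only place the proposition is used (Proposition~\ref{prop-def-phi}) the boundary term is immediately re-estimated by $|u_y|^2$, so your $s\,e^{-cs}$ argument delivers exactly what is needed there.
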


\begin{proof} 
We observe that $\varphi$ belongs to $C^4(]\tau_1,\tau_2[ \times [a,b])$, the functions $f$ and $g$ are in $C^0([\tau_1,\tau_2], L^2(a,b))$, $w$ extends to a function in \eqref{eq-C0-C1} and we have
\begin{equation*} 
w_t - w_{yy} + inq(y)^2 w + \sqrt n \f_t w - 2 \sqrt n \f_y w_y - n \f_y^2 w - \sqrt n \f_{yy} w = g.
\end{equation*}
Moreover, $w$ satisfies the boundary conditions \eqref{eq-w-Dir} and the initial and final conditions \eqref{eq-w-0T}. Then, by Proposition \ref{prop-Carleman} applied with $\vf = \sqrt n \f$, we have   
\begin{multline*} 
\int_{\tau_1}^{\tau_2} \int_a^b \big( n^{\frac 32}  \Phi_0 \abs w^2  + \sqrt n  \Phi_1 \abs{w_y}^2 \big) \, dy \, dt\\
\leq - \sqrt n \int_{\tau_1}^{\tau_2} \big[ \f_y \abs{w_y}^2 \big]_a^b \, dt +  \frac 1 2 \int_{\tau_1}^{\tau_2} \int_a^b \abs{g}^2 \, dy \, dt ,
\end{multline*}
where
\begin{equation*} 
\Phi_0 = - 2 \f_y^2 \f_{yy} - \frac {q^2 q'}{\sqrt 2}  - \frac {\f_{tt}}{2n} + \frac {\f_{yyyy}}{2n} + \frac {2\f_{ty} \f_{y}}{\sqrt n} 
\end{equation*}
and 
\begin{equation*} 
\Phi_1 = - 2 \f_{yy} - \sqrt 2 q'.
\end{equation*}
The properties of $\th$ and the boundedness of the derivatives of $\psi$ give, for $n$ large enough,
\[
\Phi_0(t,y) \geq \frac {\e \th^3} 2 \quad \text{and} \quad \Phi_1(t,y) \geq \e \th.
\]
Thus,
\begin{multline*} 
\frac \e 2 \int_{\tau_1}^{\tau_2} \int_a^b \big( n^{\frac 32} \th^3 \abs w^2  + \sqrt n \th \abs{w_y}^2 \big) \, dy \, dt\\
\leq - \sqrt n \int_{\tau_1}^{\tau_2} \big[ \f_y \abs{w_y}^2 \big]_a^b \, dt +  \frac 12 \int_{\tau_1}^{\tau_2} \int_a^b \abs g^2 \, dy \, dt.
\end{multline*}
Notice that the assumptions on $\psi$ imply that $\psi'$ does not vanish. If $\psi'$ takes positive values then we have 
\[
 - \sqrt n \int_{\tau_1}^{\tau_2} \f_y(t,b) \abs{w_y(t,b)}^2   \, dt \leq 0,
\]
which gives the first inequality. Otherwise $\psi' < 0$ and we similarly get the second estimate.
\end{proof}

\subsection{Precise estimate of the cost of observation in small time for \texorpdfstring{$n$}{n} large} \label{sec-observability-fin}

In this paragraph we finish the proof of Proposition \ref{prop-obs-large-n}.

We could apply directly Proposition \ref{prop-carlemen-theta-psi} and observe from one side of $I$ only. However, we can reduce the cost of observability if we observe from both sides. 

More precisely, the part of $u$ in $[0,\ell_+]$ will be controled by the values of $u_y$ at $\ell_+$, and the part of $u$ in $[-\ell_-,0]$ will be controled by the values of $u_y$ at $-\ell_-$. Thus, with the notation of the previous paragraph, we have to choose $\psi$ such that $\psi' < 0$ on the right and $\psi' > 0$ on the left. Since $\psi'$ does not vanish, we have to apply Proposition \ref{prop-carlemen-theta-psi} separately on the right and on the left.

\begin{proposition} \label{prop-def-phi}
Let $\tau_1$, $\tau_2$ and $\kappa$ be as given by Proposition \ref{prop-obs-large-n}. There exist $N \in \N^*$, $\f \in C^0 \big( ]\tau_1,\tau_2[ \times \bar I,\R \big)$ and $C > 0$ such that 
\begin{equation} \label{eq-phi}
\forall t \in \left[ \frac {2\tau_1 + \tau_2} 3 , \frac {\tau_1 + 2 \tau_2}{3} \right], \forall y \in  I, \quad 0 \leq \f(t,y) \leq \k,
\end{equation}
and for any $n \geq N$ and any solution $u$ of \eqref{eq-Kolmogorov-n} we have 
\begin{multline*} \label{eq-Carleman-a}
\int_{\tau_1}^{\tau_2} \int_{I}  \big( n^{\frac 32} \abs u^2  + \sqrt n  \abs{u_y}^2 \big) e^{-2\sqrt n \f} \, dy \, dt
\\ \leq C \sqrt n \int_{\tau_1}^{\tau_2}  \big(\abs{u_y(t,-\ell_-)}^2 + \abs{u_y(t,\ell_+)}^2\big)  \, dt. 
\end{multline*}
\end{proposition}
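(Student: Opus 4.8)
The plan is to observe $u$ from the two endpoints of $I$ separately: cut $u$ into a piece supported towards $\ell_+$ and a piece supported towards $-\ell_-$, apply the Carleman estimate of Proposition~\ref{prop-carlemen-theta-psi} to each on a slightly enlarged half-interval with a spatially monotone weight, and absorb the commutator source terms — which are localized near $0$ — using a gain of a power of $n$. Since $\kappa > \frac{q'(0)}{\sqrt2}\Tmax = \max\!\big(\tfrac1{\sqrt2}\int_0^{\ell_+}q,\tfrac1{\sqrt2}\int_{-\ell_-}^0\abs q\big)$, I first fix $c$ strictly between that maximum and $\kappa$, and construct $\psi_+\in C^4([-\eta,\ell_+],\R)$ and $\psi_-\in C^4([-\ell_-,\eta],\R)$ (with $\eta>0$ small, fixed below) satisfying the hypotheses of Proposition~\ref{prop-carlemen-theta-psi} on their respective intervals, with $\psi_+'<0$, $\psi_-'>0$ and $\psi_+(0)=\psi_-(0)=c$. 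For $\psi_+$ one takes $\psi_+''=-(\sqrt2 q'+\e_0)/2$ with $\psi_+'(0)=-\delta_0$, so $\psi_+'(y)=-\delta_0-q(y)/\sqrt2-\e_0 y/2$; on $[0,\ell_+]$ this gives $(\psi_+')^2\geq \delta_0^2+q^2/2$, whence $-2(\psi_+')^2\psi_+''=(\psi_+')^2(\sqrt2q'+\e_0)\geq q^2q'/\sqrt2+\sqrt2\,\delta_0^2\min_I q'$ and $-2\psi_+''=\sqrt2q'+\e_0$, so the two differential inequalities of Proposition~\ref{prop-carlemen-theta-psi} hold with a positive constant; for $\eta$ small (depending on $\delta_0,\e_0$) they persist on the extension $[-\eta,0]$, where $q\le 0$ and $q^2q'=O(\eta^2)$, and $\psi_+'$ stays negative there. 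Since $\psi_+(\ell_+)=c-\delta_0\ell_+-\tfrac1{\sqrt2}\int_0^{\ell_+}q-\tfrac{\e_0\ell_+^2}{4}>0$ once $\delta_0,\e_0$ are small (using $c>\tfrac1{\sqrt2}\int_0^{\ell_+}q$), the decreasing function $\psi_+$ is positive on $[-\eta,\ell_+]$. The function $\psi_-$ is built identically after the reflection $y\mapsto-y$, which turns $q$ into $\tilde q(z)=\abs{q(-z)}$ with the same structure and requires only $c>\tfrac1{\sqrt2}\int_{-\ell_-}^0\abs q$. One then sets $\f(t,y)=\theta(t)\psi_+(y)$ for $y\ge0$ and $\f(t,y)=\theta(t)\psi_-(y)$ for $y\le0$, with $\theta$ the function of Lemma~\ref{lemma_def_theta}; this is continuous on $]\tau_1,\tau_2[\,\times\bar I$ because $\psi_+(0)=\psi_-(0)$, and on the middle time interval $\theta\equiv1$ while $0<\psi_\pm(y)\le c<\kappa$ on $[0,\ell_+]$ resp. $[-\ell_-,0]$, which is \eqref{eq-phi}.

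Next, let $\chi_+\in C^\infty(\bar I,[0,1])$ equal $1$ on $[0,\ell_+]$ and $0$ on $[-\ell_-,-\eta]$, and $\chi_-\in C^\infty(\bar I,[0,1])$ equal $1$ on $[-\ell_-,0]$ and $0$ on $[\eta,\ell_+]$. For a solution $u$ of \eqref{eq-Kolmogorov-n}, the function $\chi_+ u$ restricted to $[-\eta,\ell_+]$ lies in the class \eqref{eq-C0-C1} by Proposition~\ref{prop-well-posedness-n} (it vanishes at $-\eta$ by the cut-off and at $\ell_+$ by the Dirichlet condition) and solves \eqref{eq-Kolmogorov-f} with $f=-2\chi_+'u_y-\chi_+''u$, supported in $]{-\eta},0[$. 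Proposition~\ref{prop-carlemen-theta-psi}(ii) on $[-\eta,\ell_+]$ with $\psi_+$, together with $\psi_+(\ell_+)>0$ and $u(t,\ell_+)=0$, then bounds, for $n$ large, $\int_{\tau_1}^{\tau_2}\!\int_{-\eta}^{\ell_+}(n^{3/2}\theta^3\abs{w_+}^2+\sqrt n\,\theta\abs{\partial_y w_+}^2)$ by $C\sqrt n\int_{\tau_1}^{\tau_2}\abs{u_y(t,\ell_+)}^2\,dt$ plus $C\int_{\tau_1}^{\tau_2}\!\int_{-\eta}^{0}(\abs{u_y}^2+\abs u^2)e^{-2\sqrt n\theta\psi_+}$, where $w_+=\chi_+ u\,e^{-\sqrt n\theta\psi_+}$. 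On $[0,\ell_+]$, where $\chi_+\equiv1$, one has $w_+=u e^{-\sqrt n\f}$ and $u_y e^{-\sqrt n\f}=\partial_y w_+ + \sqrt n\,\theta\psi_+' w_+$, so $(n^{3/2}\abs u^2+\sqrt n\abs{u_y}^2)e^{-2\sqrt n\f}\lesssim n^{3/2}\theta^3\abs{w_+}^2+\sqrt n\,\theta\abs{\partial_y w_+}^2$ pointwise; keeping only the part of the left-hand side over $[0,\ell_+]$ bounds $\int_{\tau_1}^{\tau_2}\!\int_0^{\ell_+}(n^{3/2}\abs u^2+\sqrt n\abs{u_y}^2)e^{-2\sqrt n\f}$. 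The symmetric argument (with $\psi_-$, $\chi_-$, Proposition~\ref{prop-carlemen-theta-psi}(i)) bounds $\int_{\tau_1}^{\tau_2}\!\int_{-\ell_-}^0(n^{3/2}\abs u^2+\sqrt n\abs{u_y}^2)e^{-2\sqrt n\f}$ by $C\sqrt n\int_{\tau_1}^{\tau_2}\abs{u_y(t,-\ell_-)}^2$ plus $C\int_{\tau_1}^{\tau_2}\!\int_0^{\eta}(\abs{u_y}^2+\abs u^2)e^{-2\sqrt n\theta\psi_-}$. Adding the two estimates and using that $\psi_+\ge c=\psi_-(0)\ge\psi_-$ on $[-\eta,0]$, hence $e^{-2\sqrt n\theta\psi_+}\le e^{-2\sqrt n\f}$ there, and similarly $e^{-2\sqrt n\theta\psi_-}\le e^{-2\sqrt n\f}$ on $[0,\eta]$, the two error integrals are $\le C(n^{-1/2}+n^{-3/2})\int_{\tau_1}^{\tau_2}\!\int_I(n^{3/2}\abs u^2+\sqrt n\abs{u_y}^2)e^{-2\sqrt n\f}$, which for $n$ large is absorbed into the left-hand side; this proves the proposition.

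The main obstacle is the weight construction: arranging that $-2(\psi')^2\psi''-q^2q'/\sqrt2$ and $-2\psi''-\sqrt2q'$ are both bounded below by a positive constant while keeping $\psi(0)<\kappa$. The near-optimal value is forced by the first inequality, which at leading order reads $-2(\psi')^2\psi''\approx q^2q'/\sqrt2$, i.e. $\big(\abs{\psi'}^3\big)'\approx (q^3)'/(2\sqrt2)$, so $\abs{\psi'}\gtrsim q/\sqrt2$ and $\psi(0)-\psi(\ell_+)\gtrsim\tfrac1{\sqrt2}\int_0^{\ell_+}q$ — precisely the quantity producing $\frac{q'(0)}{\sqrt2}\Tmax$, the \emph{maximum} appearing because $\psi(0)$ is a single number that must dominate both one-sided integrals. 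The delicate point is strict positivity near $y=0$, where $q(0)=0$ makes the naive choice $\psi'=-q/\sqrt2$ degenerate; this is repaired by the lower-order terms $-\delta_0$ and $-\e_0 y/2$, which cost only $O(\delta_0+\e_0)$ in $\psi(0)$ and therefore do not spoil $\psi(0)<\kappa$. The remaining points — the regularity of $\chi_\pm u$, the vanishing at the endpoints in time (built into Proposition~\ref{prop-carlemen-theta-psi} through $\theta\to\infty$), and the power-of-$n$ bookkeeping in the absorption — are routine.
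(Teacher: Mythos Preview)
Your proof is correct and follows essentially the same strategy as the paper's: split $u$ with cut-offs $\chi_\pm$ localized on overlapping half-intervals, apply Proposition~\ref{prop-carlemen-theta-psi} with a monotone weight $\psi_\pm$ on each piece, glue the weights continuously at $y=0$, and absorb the commutator terms (supported in $[-\eta,\eta]$) using the gain of $n^{1/2}$ on the left-hand side. The only real difference is the explicit form of the perturbation of the critical weight $\psi'\approx \mp q/\sqrt2$: the paper takes $\psi_+'(y)=-\beta\big(q(y)+3\e_0\big)$ with $\beta>1/\sqrt2$, whereas you take $\psi_+''=-\big(\sqrt2\,q'+\e_0\big)/2$ with $\psi_+'(0)=-\delta_0$; both choices make the two differential inequalities strictly positive while keeping $\psi_+(0)$ arbitrarily close to $\tfrac1{\sqrt2}\int_0^{\ell_+}q$, and both yield $\psi_+\ge\psi_-$ on $[-\eta,0]$ (and symmetrically) by monotonicity, which is what makes the absorption work.
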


\begin{proof}
\stepp Let $\beta > \frac 1 {\sqrt 2}$ and $\e_0 > 0$ be such that 
\begin{equation} \label{eq-eps-kappa}
\e_0 + \beta \max\left( \int_{0}^{\ell_+} \big( q(s) + 3 \e_0 \big) \, ds, \int_{-\ell_-}^0 \big( \abs{q(s)} + 3 \e_0 \big) \,ds \right) < \k.
\end{equation}
Let $\d \in ]0,\min(\ell_-,\ell_+)]$ be such that  $\max(\abs{q(-\delta)},q(\delta)) \leq \e_0$. For $y \in [-\d,\ell_+]$ we set 
\[
\psi_+(y) = \e_0 + \beta \int_{y}^{\ell_+} \big( q(s)+3\e_0 \big) \, ds + c_+,
\]
with $c_+ \geq 0$ to be chosen later.
Then we have 
\[
\psi_+ \geq \e_0, \quad \psi_+' = -\beta  (q + 3 \e_0) \leq - 2 \beta \e_0, \quad \psi_+'' = - \beta q',
\]
so 
\begin{align*}
- 2 \psi_+'^2 \psi_+'' - \frac {q^2 q'}{\sqrt 2}
&  = 2 \beta^3    (q+3\e_0)^2 q' -   \frac {q^2 q'}{\sqrt 2}\\
& \geq \frac {q'}{\sqrt 2} \big( (q+3\e_0)^2 - q^2 \big)\\
&  \geq \e_0^2 \min(q')
\end{align*}
and
\[
- 2 \psi_+'' - \sqrt 2 q' \geq 2 \left( \beta - \frac{1}{\sqrt 2}\right) q' \geq 2 \left( \beta - \frac{1}{\sqrt 2}\right)  \min(q').
\]
Thus $\psi_+$ satisfies the assumptions of Proposition \ref{prop-carlemen-theta-psi} on $[-\d,\ell_+]$.
Then, for $t \in ]\tau_1,\tau_2[$ we set
\begin{equation} \label{def-phi-+}
\f_+(t,y) = \th(t) \psi_+(y),
\end{equation}
where $\th$ is given by Lemma \ref{lemma_def_theta}.

\stepp We consider $\h_+ \in C^\infty(\bar I,[0,1])$ such that $\h_+ = 1$ on $[0,\ell_+]$ and $\h_+ = 0$ on $[-\ell_-,-\d]$. Then we set $u_+ = \h_+ u$. It satisfies 
\[
\forall t \in ]\tau_1,\tau_2[, \quad u_+(t,-\d) = u_+(t,\ell_+) = 0 
\]
and 
\[
\forall t \in ]\tau_1,\tau_2[, \forall y \in [-\d,\ell_+], \quad \big( \partial_t - \partial_{yy} + inq^2 \big) u_+(t,y) = f_+(t,y),
\]
where 
\[
f_+ = - \h_+'' u  - 2 \h_+' u_y.
\]
In particular, $f_+(t,\cdot)$ is supported in $[-\d,0]$. We set 
\[
w_+ = u_+ e^{-\sqrt n \f_+} 
\quad \text{and} \quad 
g_+ = f_+ e^{-\sqrt n \f}.
\]
We have
\begin{align*}
\sqrt n \abs{\partial_y u_+}^2 e^{-2 \sqrt n \f_+}
& \lesssim \sqrt n \abs{\partial_y w_+}^2 + n^{\frac 32} \abs{w_+}^2 \th(t)^2 . 
\end{align*}
Then, by the second case in Proposition \ref{prop-carlemen-theta-psi}, we obtain 
\begin{eqnarray}\label{eq++}
\lefteqn{\int_{\tau_1}^{\tau_2} \int_0^{\ell_+} \big( n^{\frac 32} \abs {u_+}^2  + \sqrt n   \abs{\partial_y u_+}^2 \big) e^{- 2 \sqrt n \f_+} \, dy \, dt}\\
\nonumber
&& \lesssim \int_{\tau_1}^{\tau_2} \int_0^{\ell_+} \big( n^{\frac 32} \th^2 \abs {w_+}^2  + \sqrt n   \abs{\partial_y w_+}^2 \big) \, dy \, dt\\
\nonumber
&& \lesssim \sqrt n \int_{\tau_1}^{\tau_2} \abs{\partial_y w_+(t,\ell_+)}^2  \, dt +  \int_{\tau_1}^{\tau_2} \int_{-\d}^0 \abs {g_+}^2 \, dy \, dt\\
\nonumber
&& \lesssim \sqrt n \int_{\tau_1}^{\tau_2} \abs{\partial_y u_+(t,\ell_+)}^2  \, dt +  \int_{\tau_1}^{\tau_2} \int_{-\d}^0 \abs {f_+}^2 e^{-2\sqrt n \f_+} \, dy \, dt.
\end{eqnarray}

\stepp 
For $y \in [-\ell_-,\d]$ we set 
$$
\psi_-(y) = \e_0 + \beta \int_{-\ell_-}^y (\abs{q(s)} + 3 \varepsilon_0)\, ds + c_-,
$$
with $c_- \geq 0$ to be chosen later, and for $t \in ]0,T[$,
\[
\f_-(t,y) = \theta(t) \psi_-(y).
\]
Let  $\h_- \in C^\infty([-\ell_-,\ell_+],[0,1])$ such that $\h_- = 1$ on $[-\ell_-,0]$ and $\h_- = 0$ on $y \in [\d,\ell_+]$. We set  $u_- = \h_- u$ and $f_- = -\h_-'' u - 2 \h_-' u_y$. Then, as above, but using the first statement in Proposition \ref{prop-carlemen-theta-psi}, we obtain
\begin{multline} \label{eq--}
\int_{\tau_1}^{\tau_2} \int_{-\ell_-}^0 \big( n^{\frac 32}  \abs {u_-}^2  + \sqrt n  \abs{\partial_y u_-}^2 \big) e^{- 2 \sqrt n \f_-} \, dy \, dt\\
\lesssim \sqrt n \int_{\tau_1}^{\tau_2} \abs{\partial_y u_-(t,-\ell_-)}^2  \, dt + \int_{\tau_1}^{\tau_2} \int_0^\d \abs {f_-}^2 e^{-2\sqrt n \f_-} \, dy \, dt.
\end{multline}

\stepp We set $c_+ = \max(0,c)$ and $c_- = \max(0,-c)$ where
$$
c = \beta \left(\int_{-\ell_-}^0 (\abs{q(s)} + 3\e_0)\, ds - \int_0^{\ell_+}(q(s) + 3\, \e_0)\,ds  \right),
$$
so that $\psi_+(0) = \psi_-(0)$. Then for $t \in ]\tau_1,\tau_2[$ and $y \in \bar I$ we set 
\[
\f(t,y) = 
\begin{cases}
\f_-(t,y) & \text{if } y \leq 0,\\
\f_+(t,y) & \text{if } y \geq 0.
\end{cases}
\]
In particular, by construction, $\f$ is continuous on $]\tau_1,\tau_2[\times \bar I$ and satisfies  \eqref{eq-phi}. Moreover, $\f_+ \geq \f$ on $[-\d,0]$, $\f_- \geq \f$ on $[0,\d]$ and, on $[-\d,\d]$,
\[
\abs{f_+} + \abs {f_-} \lesssim \abs {u} + \abs{u_y}.
\]
Then, by summing \eqref{eq++} and \eqref{eq--},
\begin{multline*} 
\int_{\tau_1}^{\tau_2} \int_{I} \big( n^{\frac 32}  \abs u^2  + \sqrt n  \abs{u_y}^2 \big) e^{-2 \sqrt n \f} \, dy \, dt\\
\lesssim \sqrt n \int_{\tau_1}^{\tau_2} \big( \abs{u_y(t,-\ell_-)}^2 + \abs{u_y(t,\ell_+)}^2 \big)  \, dt + \int_{\tau_1}^{\tau_2} \int_{-\d}^\d \big(\abs u^2 + \abs{u_y}^2\big) e^{-2\sqrt n \f} \, dy \, dt.
\end{multline*}
For $n$ large enough, the last term is smaller than one half of the left-hand side, and the conclusion follows.
\end{proof}

We can now prove Proposition \ref{prop-obs-large-n}.

\begin{proof}[Proof of Proposition \ref{prop-obs-large-n}] 
Let $N$ be given by Proposition \ref{prop-def-phi} and $n \geq N$. Let $u$ be a solution of \eqref{eq-Kolmogorov-n}. Let $\f$ be given by Proposition \ref{prop-def-phi}. By \eqref{eq-phi} we have in particular
\[
\int_{\frac{2 \tau_1 + \tau_2}{3}}^{\frac{\tau_1 + 2\tau_2}{3}} \int_{I} \abs{u}^2  \, dy \, dt \leq \frac {C e^{2 \k \sqrt n}} n \int_{\tau_1}^{\tau_2} \big( \abs{u_y (t,-\ell_-)}^2 + \abs{u_y(t,\ell_+)}^2 \big) \, dt.
\]
By \eqref{eq-semigroup-contraction} we have $\nr{u(\tau_2)}_{L^2(I)}^2 \leq \nr{u(t)}_{L^2(I)}^2$ for all $t \in \left] \frac{2\tau_1 + \tau_2}{3}, \frac{\tau_1 + 2\tau_2}{3} \right[$, so 
\[
\nr{u(\tau_2)}_{L^2(I)}^2 \leq  \frac{3\,C e^{2 \k \sqrt n}}{(\tau_2-\tau_1)n} \int_{\tau_1}^{\tau_2} \big( \abs{u_y (t,-\ell_-)}^2 + \abs{u_y(t,\ell_+)}^2 \big) \, dt,
\]
and Proposition \ref{prop-obs-large-n} is proved.
\end{proof}

\subsection*{Aknowledgements} We express our gratitude to Karine Beauchard, for enriching discussions on this work. This work has been supported by the CIMI Labex, Toulouse, France, under grant ANR-11-LABX-0040-CIMI.

\end{document}